\title{Minimal laminations and level sets of $1$-harmonic functions}
\author{Aidan Backus}
\address{Department of Mathematics, Brown University}
\email{aidan\_backus@brown.edu}
\date{\today}
\keywords{laminations, minimal hypersurfaces, functions of least gradient, Lipschitz regularity}
\subjclass[2020]{primary: 49Q20; secondary: 49Q05, 51F30}
\newcommand{\NN}{\mathbf{N}}
\newcommand{\ZZ}{\mathbf{Z}}
\newcommand{\RR}{\mathbf{R}}
\newcommand{\Hyp}{\mathbf H}
\newcommand{\Sph}{\mathbf S}
\newcommand{\Ball}{\mathbf{B}}
\newcommand*\dif{\mathop{}\!\mathrm{d}}
\DeclareMathOperator{\dist}{dist}
\DeclareMathOperator{\supp}{supp}
\DeclareMathOperator{\tr}{tr}
\newcommand{\Leaves}{\mathscr L}
\newcommand{\Hypspace}{\mathscr H}
\newcommand{\Two}{\mathrm{I\!I}}
\newcommand{\normal}{\mathbf n}
\newcommand{\Lip}{\mathrm{Lip}}
\newcommand{\Riem}{\mathrm{Riem}}
\newcommand{\Ric}{\mathrm{Ric}}
\newcommand{\dfn}[1]{\emph{#1}\index{#1}}
\newcommand{\loc}{\mathrm{loc}}
\newcommand{\cpt}{\mathrm{cpt}}
\newtheorem{theorem}{Theorem}[section]
\newtheorem{lemma}[theorem]{Lemma}
\newtheorem{proposition}[theorem]{Proposition}
\newtheorem{corollary}[theorem]{Corollary}
\newtheorem{conjecture}[theorem]{Conjecture}
\newtheorem{mainthm}{Theorem}
\theoremstyle{definition}
\newtheorem{definition}[theorem]{Definition}
\newtheorem{example}[theorem]{Example}
\numberwithin{equation}{section}
\def\XXint#1#2#3{{\setbox0=\hbox{$#1{#2#3}{\int}$ }
\vcenter{\hbox{$#2#3$ }}\kern-.6\wd0}}
\begin{document}
\begin{abstract}
We collect several results concerning regularity of minimal laminations, and governing the various modes of convergence for sequences of minimal laminations.
We then apply this theory to prove that a function has locally least gradient (is $1$-harmonic) iff its level sets are a minimal lamination; this resolves an open problem of Daskalopoulos and Uhlenbeck.
\end{abstract}

\maketitle

%%%%%%%%%%%%%%%%%%%%%%%%%%%%%%%%%%%%%%%%%%%%%%%%%%%%%%%

% \tableofcontents

\section{Introduction}
The space of codimension-$1$ laminations by minimal submanifolds on a Riemannian manifold has been topologized in several different ways.
Thurston \cite[Chapter 8]{thurston1979geometry} introduced both his geometric topology as well as the vague topology on the space of measured geodesic laminations.
Independently of Thurston, Colding and Minicozzi \cite[Appendix B]{ColdingMinicozziIV} introduced a topology that emphasized not the laminations themselves, but rather the coordinate charts which flatten them.

We establish a regularity theorem for minimal laminations, which implies compactness properties for the aforementioned topologies.
We also show that a current is Ruelle-Sullivan with respect to a minimal lamination if and only if it is locally the exterior derivative of a function of least gradient, generalizing a theorem of Daskalopoulos and Uhlenbeck \cite[Theorem 6.1]{daskalopoulos2020transverse} and strengthing an unpublished result of Auer and Bangert \cite{Auer01, Auer12}.

%%%%%%%%%%%%%%%%%
\subsection{Minimal laminations}\label{Lams sections}
Throughout this paper, we fix an interval $I \subset \RR$, a box $J \subset \RR^{d - 1}$, and a smooth Riemannian manifold $M = (M, g)$ of dimension $d \geq 2$.
We do not take $M$ to have a boundary, so that if $E \subseteq M$ is a compact set, $E$ cannot contain a singularity of $M$.

\begin{definition}
A (codimension-$1$) \dfn{laminar flow box} is a $C^0$ coordinate chart $F: I \times J \to M$ and a compact set $K \subseteq I$, such that for each $k \in K$, $F|_{\{k\} \times J}$ is a $C^1$ embedding, and the \dfn{leaf} $F(\{k\} \times J)$ is a $C^1$ complete hypersurface in $F(I \times J)$.
Two laminar flow boxes belong to the same \dfn{laminar atlas} if the transition maps between them preserve the germs of the local leaf structure.
\end{definition}

\begin{definition}
A \dfn{lamination} $\lambda$ consists of a nonempty closed set $S \subseteq M$, called its \dfn{support}, and a maximal laminar atlas $\{(F_\alpha, K_\alpha): \alpha \in A\}$ such that in the image $U_\alpha$ of each flow box $F_\alpha$,
$$S \cap U_\alpha = F_\alpha(K_\alpha \times J).$$
If $\lambda$ is a lamination in the image of a flow box $F$, and $N := F(\{k\} \times J)$ is a leaf of $\lambda$, we call $k$ the \dfn{label} of $N$.
A \dfn{foliation} is a lamination with support $S = M$.
\end{definition}

Summarizing the above definitions, a lamination is a nonempty closed set $S$ with a $C^0$ local product structure which locally realizes it as $K \times J$ for some compact set $K \subset \RR$.
In some sources, including \cite{Auer01}, laminations are not required to have a $C^0$ local product structure, but are only required to have disjoint leaves.

\begin{definition}
We call a lamination $C^r$ (resp. \dfn{Lipschitz}) if its flow boxes are $C^r$ (resp. Lipschitz) coordinate charts, and say that it is \dfn{tangentially $C^r$} if for each flow box $(F, K)$, $F|_{\{k\} \times J}$ is a $C^r$ embedding for $k \in K$.\footnote{Such laminations are also known as $C^r$ \dfn{along leaves} \cite{Morgan88}.}
\end{definition}

In particular, we assume that laminations are $C^0$ and tangentially $C^1$; the latter assertion implies that the flow box can push forward the normal vector to each leaf, and in particular that the mean curvature to each leaf is well-defined in a distributional sense.
The Lipschitz regularity is particularly natural in light of the previous results of \cite{Solomon86, Zeghib04}.

In this paper we shall focus on laminations with minimal leaves\footnote{The word ``minimal'' is overloaded. In \cite{daskalopoulos2020transverse}, a \dfn{minimal lamination} is a lamination $\lambda$ in which every leaf is dense in $\supp \lambda$.
We adopt the terminology of \cite{ColdingMinicozziIV}.} and transverse measures.

\begin{definition}
A lamination $\lambda$ is \dfn{minimal} if its leaves $F_\alpha(\{k\} \times J)$ have zero mean curvature, and is \dfn{geodesic} if, in addition, $d = 2$.
\end{definition}

\begin{definition}\label{transverse measure definition}
Let $\lambda$ be a lamination with atlas $A$.
A \dfn{transverse measure} to $\lambda$ consists of Radon measures $\mu_\alpha$ with $\supp \mu_\alpha = K_\alpha$, $\alpha \in A$, such that each transition map $\psi_{\alpha \beta}$ is measure-preserving:
$$\mu_\alpha|_{K_\alpha \cap K_\beta} = \psi_{\alpha \beta}^* (\mu_\beta|_{K_\alpha \cap K_\beta}).$$
The pair $(\lambda, \mu)$ is called a \dfn{measured lamination}.
\end{definition}

We assume that every transverse measure has full support, $\supp \mu_\alpha = K_\alpha$.
Therefore not every lamination $\lambda$ admits a transverse measure; for example, this happens if $\lambda$ has an isolated leaf which meets a compact transverse curve on a countable but infinite set \cite[Theorem 3.2]{Morgan88}.

\begin{definition}
Let $(\lambda, \mu)$ be a measured oriented lamination, with atlas $A$ and a subordinate partition of unity $(\chi_\alpha)$.
The \dfn{Ruelle-Sullivan current} $T_\mu$ associated to $(\lambda, \mu)$ is defined for all compactly supported $(d - 1)$-forms $\varphi$ by
\begin{equation}\label{RS current}
\int_M T_\mu \wedge \varphi := \sum_{\alpha \in A} \int_{K_\alpha} \left[\int_{\{k\} \times J} (F_\alpha^{-1})^* (\chi_\alpha \varphi) \right] \dif \mu_\alpha(k).
\end{equation}
\end{definition}

See Appendix \ref{portmanteau appendix} for generalities on currents.
The Ruelle-Sullivan current was introduced by \cite{Ruelle75}, and we review its properties in \S\ref{RS prelims}.
In particular we show that $T_\mu$ makes sense (as a distributional section of a suitable line bundle) even if $\lambda$ is not orientable.

%%%%%%%%%%%%%%%%%%
\subsection{Regularity of minimal laminations}
The definitions of \S\ref{Lams sections} are tedious to work with, both because one has to prove the existence of flow boxes which flatten sets which may be extremely rough, and because one has no quantitative control on said flow boxes.
However, if we have curvature bounds on the leaves and on the underlying manifold $M$, our first main theorem drastically changes the story: it shows that the lamination $\lambda$ can be reconstructed from its set of leaves, in such a way that the flow boxes for $\lambda$ are under control in the Lipschitz and tangentially $C^\infty$ sense.
Here, \dfn{tangential $C^\infty$} is the topology defined by seminorms $f \mapsto \|\nabla_N^m f\|_{C^0}$, where $N$ ranges over leaves of the given lamination $\lambda$, $\nabla_N$ is the Levi-Civita connection on $N$, $m$ ranges over $\NN$ (including $0$); thus we ignore derivatives normal to $N$.

\begin{mainthm}\label{regularity theorem}
Let $K := \|\Riem_M\|_{C^0}$ and let $i$ be the injectivity radius of $M$, and suppose that $K < \infty$, $i > 0$.
Let $\mathcal S$ be a nonempty set of disjoint minimal hypersurfaces in $M$, such that for every $N \in \mathcal S$,
\begin{equation}\label{curvature bound in regularity}
	\|\Two_N\|_{C^0} \leq A,
\end{equation}
and that $\bigcup_{N \in \mathcal S} N$ is a closed subset of $M$. Then:
\begin{enumerate}
\item There exists a Lipschitz minimal lamination $\lambda$ whose leaves are exactly the elements of $\mathcal S$.
\item There exists a Lipschitz line bundle on $M$ which is normal to every leaf of $\lambda$.
\item There exist constants $L = L(A, K, i) > 0$ and $r = r(A, K, i) > 0$, and a Lipschitz laminar atlas $(F_\alpha)$ for $\lambda$, such that for every $\alpha$,
\begin{equation}\label{conorm of flow box}
	\max(\Lip(F_\alpha), \Lip(F_\alpha^{-1})) \leq L,
\end{equation}
and the image of $F_\alpha$ contains a ball of radius $r$.
\item $F_\alpha$ and $F_\alpha^{-1}$ are tangentially $C^\infty$, with seminorms only depending on $A, K, i$.
\end{enumerate}
\end{mainthm}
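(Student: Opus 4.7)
My plan is to reduce everything to a local graph representation and drive the regularity statements by elliptic theory for the minimal surface equation. Fix $p \in \bigcup_{N \in \mathcal S} N$ lying in a leaf $N_0 \in \mathcal S$. In Riemannian normal coordinates on $M$ centered at $p$, adapted so that $T_p N_0$ is the ``horizontal'' hyperplane, the curvature bound $\|\Two_{N_0}\|_{C^0} \leq A$ combined with the minimal surface equation and with the hypotheses $\|\Riem_M\|_{C^0} \leq K$ and $\inj(M) \geq i$ gives, via a quantitative implicit function argument, a radius $r_1 = r_1(A,K,i) > 0$ and a smooth $u_0$ on the horizontal disk $J := B_{r_1}(0) \subset T_p N_0$ such that $N_0 \cap B_{r_1}(p)$ is the graph of $u_0$, with $\|u_0\|_{C^m}$ controlled in terms of $A, K, i, m$.

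The first substantive step is to show that for some slightly smaller radius $r_2 = r_2(A,K,i)$, every other leaf $N \in \mathcal S$ meeting $B_{r_2}(p)$ is also representable as a graph $u_k$ over $J$ with the same uniform $C^\infty$ estimates. Applying the above construction to $N$ at a point $q \in N \cap B_{r_2}(p)$ represents $N$ as a graph over $T_q N$; a barrier-style argument using $\|\Two_N\|_{C^0} \leq A$ shows that if $T_q N$ made a substantial angle with $T_p N_0$, then the graph of $N$ would be forced to cross $N_0$ inside $B_{r_1}(p)$, contradicting disjointness. Hence the tangent planes are nearly parallel, and after a rotation $N$ is realized as a graph over $J$. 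Disjointness makes the family $\{u_k\}_{k \in K}$ totally ordered, and closedness of $\bigcup \mathcal S$ makes $K$ a closed subset of an interval (parameterized, say, by $k \mapsto u_k(0)$). This gives a candidate flow box $F(k, x) := (x, u_k(x))$.

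The principal analytic step, and in my view the main obstacle, is the uniform bilateral Lipschitz bound \eqref{conorm of flow box}. I would establish it as follows: for $k_1 < k_2$ the difference $v := u_{k_2} - u_{k_1}$ is a positive solution of a linear uniformly elliptic equation, obtained by writing the minimal surface operator applied at $u_{k_2}$ minus that at $u_{k_1}$ as the integral of its linearization along the straight homotopy; the uniform $C^1$ bounds on the $u_k$ supplied by Step~1 translate into uniform ellipticity and $C^0$ coefficient bounds depending only on $A, K, i$. The interior Harnack inequality and standard gradient estimates then yield
\begin{equation*}
\|v\|_{C^1(\tfrac12 J)} \leq C\,(k_2 - k_1), \qquad k_2 - k_1 \leq C\, v(x) \ \text{ for } x \in \tfrac12 J,
\end{equation*}
with $C = C(A,K,i)$. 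These comparability estimates imply that $F$ is $L$-bi-Lipschitz with $L = L(A,K,i)$, and provide the quantitative counterpart of the Solomon--Zeghib Lipschitz regularity results \cite{Solomon86, Zeghib04} cited in the introduction.

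Given such flow boxes around every point of $\bigcup \mathcal S$, the remaining assertions are matters of assembly. Running the construction at each $p$ produces an atlas whose transition maps permute disjoint ordered graphs and so preserve leaves, giving the lamination of Part~(1). The normal line bundle of Part~(2) is obtained by patching the unit normals to the individual $u_k$ across chart overlaps; these agree up to sign on leaves by construction, and their Lipschitz dependence on the base point is the content of Step~3. The Lipschitz and radius estimates of Part~(3) fall out directly from the construction, while the tangential $C^\infty$ statement of Part~(4) is uniform interior Schauder regularity for each $u_k$, inherited from the minimal surface equation with estimates depending only on $A, K, i$.
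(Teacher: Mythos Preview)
Your proposal is correct and follows essentially the same route as the paper: normal coordinates adapted to a base leaf, a near-parallelism argument from disjointness and the curvature bound (the paper's Lemma~\ref{lams have C0 fields}), and then the Harnack/gradient estimate for the difference $v = u_{k_2} - u_{k_1}$ (the paper's inequality~\eqref{Schauder Harnack}) to drive the bi-Lipschitz bound. The one point you leave implicit is that your candidate $F(k,x) = (x,u_k(x))$ is only defined on $K \times J$, not on a full open product $I \times J$; the paper (following Colding--Minicozzi) extends $F$ across the gaps of $K$ by linear interpolation between adjacent $u_k$, and your Harnack comparability estimate is precisely what makes that extension bi-Lipschitz.
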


In the remainder of this paper we prove two consequences of Theorem \ref{regularity theorem}: a characterization of minimal laminations (Theorem \ref{main thm}) and a compactness theorem (Theorem \ref{compactness theorem}), which we state below.
In both theorems, a bound on the curvature will be necessary in order to invoke Theorem \ref{regularity theorem}.

\begin{definition}
A sequence $(\lambda_n)$ of laminations has \dfn{bounded curvature} if there exists $C > 0$ such that for any $n$ and any leaf $N$ of $\lambda_n$, the second fundamental form satisfies $\|\Two_N\|_{C^0} \leq C$.
\end{definition}

Several similar results to Theorem \ref{regularity theorem} have appeared in the literature already, but Theorem \ref{regularity theorem} strengthens and clarifies them.
To our knowledge, the first related result is due to Solomon \cite[Theorem 1.1]{Solomon86}, which we improve on in several ways:
\begin{enumerate}
\item \label{foliation to lamination} Solomon's proof is for minimal foliations in $\RR^d$.
\item We obtain estimates which only depend on the curvatures of the leaves and $M$, and on the injectivity radius $i$; they do not depend on the regularity of a given $C^0$ laminar atlas.
\item In fact, we do not even assume the existence of a $C^0$ laminar atlas.
\end{enumerate}
As Solomon notes, it is easy to extend his proof to minimal foliations in a Riemannian manifold $M$; the key point of (\ref{foliation to lamination}) is that we would like Theorem \ref{regularity theorem} to be true for minimal \emph{laminations}.

Our work is closest to a compactness theorem due to Colding and Minicozzi for minimal laminations of a Riemannian manifold \cite[Appendix B]{ColdingMinicozziIV}.
Their new idea is to fill in the gaps between the leaves in Solomon's constructions by linear interpolation.
However, Colding and Minicozzi assume that the laminations have finitely many leaves, and that the curvature bound (\ref{curvature bound in regularity}) implies that all of the leaves can be represented as graphs at once.
Indeed, \emph {a priori}, the leaves could fail to be close to parallel, and then it would not be possible to construct a coordinate chart in which they are all graphs.

We eliminate such assumptions by showing that members of $\mathcal S$ must be ``close to parallel on small scales'', where the scale is governed by $A, K$.
Otherwise, since the scale is small, we may replace the elements of $\mathcal S$ by their tangent spaces, which would then intersect, contradicting the disjointness of $\mathcal S$.
This approach was already suggested by Thurston \cite[\S8.5]{thurston1979geometry} in the case of geodesic laminations, though he omitted the details. 

Using completely different techniques, Daskalopoulos and Uhlenbeck \cite[Proposition 7.3]{daskalopoulos2020transverse} obtained a version of Theorem \ref{regularity theorem} without any $C^0$ dependence, under the assumption that $M$ is a closed hyperbolic surface.
The key point of their argument is that the exponential map sends lines to geodesics, so it provides a much shorter proof of Theorem \ref{regularity theorem}, at the price of only working in dimension $2$.

%%%%%%%%%%%%%%%%%%
\subsection{Applications to \texorpdfstring{$1$-harmonic}{one-harmonic} functions}\label{FLG section}
The main result of this paper realizes the Ruelle-Sullivan current of a minimal lamination as the exterior derivative of a function of locally least gradient, and vice versa.

Write $BV_\loc(M), BV(M), BV_\cpt(M)$ for the spaces of functions of locally bounded variation, bounded variation, and bounded variation and compact support, respectively.
We review these function spaces in Appendix \ref{BV appendix}.

\begin{definition}
Let $u \in BV_\loc(M)$.
\begin{enumerate}
\item $u$ has \dfn{least gradient} in $M$ if for every $v \in BV_\cpt(M)$,
$$\int_{\supp v} \star |\dif u| \leq \int_{\supp v} \star |\dif (u + v)|.$$
\item $u$ has \dfn{locally least gradient} if there exists a cover $\mathcal U$ of $M$ by open sets with smooth boundary such that for any $U \in \mathcal U$, $u|_U$ has least gradient.
\end{enumerate}
\end{definition}

The notion of function of least gradient essentially goes back to work of Miranda \cite{Miranda66,Miranda67} and Bombieri, de Giorgi, and Giusti \cite{BOMBIERI1969} on area-minimizing hypersurfaces.
We refer to the monograph of G\'orny and Maz\'on \cite{gorny2024leastgradient} for a detailed treatment.

Functions of (locally) least gradient are weak solutions, in a suitable sense, of the \dfn{$1$-Laplace equation}
\begin{equation}\label{1Laplacian}
	\nabla \cdot \left(\frac{\nabla u}{|\nabla u|}\right) = 0
\end{equation}
and so we also call functions of locally least gradient \dfn{$1$-harmonic functions} \cite{Mazon14}.
Observe that formally, (\ref{1Laplacian}) implies that the level sets $\partial \{u > y\}$ of $u$ are minimal hypersurfaces.
See Appendix \ref{boundary conventions} for our conventions on boundaries of measurable sets.

The above definitions are slightly nonstandard, and this is necessary both because the notion of lamination is local (in the sense that it is decided by restrictions to an arbitrarily fine open cover) and because in typical applications one wants to allow the domain $M$ to be unbounded.
In \S\ref{least gradient formulation} we discuss these subtleties.

\begin{mainthm}\label{main thm}
Suppose that $2 \leq d \leq 7$.
\begin{enumerate}
\item Let $u$ be a function of locally least gradient on $M$ which is not constant.
Then:
\begin{enumerate}
\item $\bigcup_{y \in \RR} \partial \{u > y\} \cup \partial \{u < y\}$ is the support of a Lipschitz minimal lamination $\lambda$.
\item The leaves of $\lambda$ are exactly the connected components of $\partial \{u > y\}$ or $\partial \{u < y\}$, with $y$ ranging over $\RR$.
\item There exists a measured oriented structure on $\lambda$ whose Ruelle-Sullivan current is $\dif u$.
\item If $u$ has least gradient, then the leaves of $\lambda$ are area-minimizing.
\end{enumerate}
\item Conversely, if $H^1(M, \RR) = 0$ and $\lambda$ is a measured oriented minimal lamination, then:
\begin{enumerate}
\item If $\lambda$ has bounded curvature, then any primitive $u$ of the Ruelle-Sullivan current of $\lambda$ has locally least gradient.
\item If the leaves of $\lambda$ are area-minimizing, then $u$ has least gradient.
\end{enumerate}
\end{enumerate}
\end{mainthm}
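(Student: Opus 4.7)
The two directions of Theorem \ref{main thm} are quite different in flavor, and both rely crucially on the regularity provided by Theorem \ref{regularity theorem}. For direction (1), the plan is to first invoke the classical theorem of Miranda and Bombieri--De Giorgi--Giusti, which says that when $u$ has locally least gradient, the topological boundary $\partial\{u > y\}$ (and analogously $\partial\{u < y\}$) is locally area-minimizing for every $y \in \RR$. Because $2 \leq d \leq 7$, the regularity theory of area-minimizers (De Giorgi, Simons, Federer) promotes each such boundary to a smooth embedded minimal hypersurface, and interior $\varepsilon$-regularity provides a uniform bound on $\|\Two_N\|_{C^0}$ on compact subsets. Distinct level sets are strictly ordered, so the strong maximum principle for minimal hypersurfaces forces their connected components to be pairwise disjoint, and the union of all such boundaries coincides with the support of the vector Radon measure $\dif u$, hence is closed. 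Theorem \ref{regularity theorem} applied to the family $\mathcal S$ of these connected components then produces the Lipschitz minimal lamination $\lambda$ whose leaves are exactly those connected components.

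To equip $\lambda$ with the measured oriented structure realizing $\dif u$, I would work in each flow box $F_\alpha : I \times J \to U_\alpha$ furnished by Theorem \ref{regularity theorem}. Since each leaf is a level set of $u$, the composition $u \circ F_\alpha$ depends only on the first coordinate, giving a one-variable BV function $U_\alpha(t)$ whose distributional derivative is a signed Radon measure on $I$ supported on $K_\alpha$. Setting $\mu_\alpha := |\dif U_\alpha|$ supplies the transverse measure, and the sign of $\dif U_\alpha$ supplies the orientation; the BV chain rule handles compatibility across overlapping flow boxes. A direct Fubini computation against the definition (\ref{RS current}) then identifies $T_\mu = \dif u$. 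When $u$ has global least gradient, the Miranda theorem already gives global area-minimality of the level sets, so the last clause of direction (1) is free.

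For direction (2), the hypothesis $H^1(M, \RR) = 0$ makes the closed current $T_\mu$ exact, yielding a primitive $u \in BV_\loc(M)$. To verify the locally least gradient property I would run the calibration argument inside each flow box. By Theorem \ref{regularity theorem}(2) there is a Lipschitz unit vector field $\nu$ normal to every leaf of $\lambda$, and the goal is to extend $\nu$ to a Lipschitz unit vector field on a neighborhood of $\supp \lambda$ with $\Div(\nu) = 0$ distributionally. On $\supp \lambda$ itself, minimality of the leaves and $|\nu| = 1$ force $\Div(\nu) = 0$ pointwise; off the leaves the candidate is the normalized gradient of the label function $\ell := \pi_1 \circ F_\alpha^{-1}$, which is Lipschitz and transverse to leaves. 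Given such a $\nu$, Anzellotti's pairing yields for any $v \in BV_\cpt$
$$\int \star|\dif u + \dif v| \geq \int \langle \dif u + \dif v, \nu \rangle = \int \star|\dif u| + \int \langle \dif v, \nu \rangle = \int \star|\dif u|,$$
where the first equality uses $\dif u = |\dif u|\, \nu$ on $\supp\lambda$ by construction of the primitive, and the cross term vanishes by integration by parts against the divergence-free $\nu$. If the leaves are area-minimizing, an analogous global calibration upgrades this to least gradient on all of $M$.

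The main obstacle is the construction in direction (2) of the divergence-free unit Lipschitz extension of $\nu$ past $\supp \lambda$. A naive Lipschitz interpolation between leaves destroys the divergence-free condition, so some structure is needed. My plan is to work directly in the Lipschitz flow-box coordinates provided by Theorem \ref{regularity theorem}, where leaves become horizontal slices $\{t\} \times J$ and the natural candidate is the pushforward of the vertical unit field rescaled by an appropriate Jacobian-type factor; minimality of the leaves is exactly the condition that the resulting normalization produces a distributionally divergence-free extension. The bookkeeping is nontrivial because $F_\alpha$ is only Lipschitz, so the derivative computations must be interpreted in the BV/Anzellotti sense, and this is where the tangentially $C^\infty$ control from Theorem \ref{regularity theorem}(4) becomes essential.
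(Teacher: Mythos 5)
Your outline of direction~(1) tracks the paper's broad strategy (Bombieri--De~Giorgi--Giusti, Schoen--Simon--Yau curvature estimates, maximum principle, flow boxes), but there are two genuine gaps.

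First, you assert that the union $\bigcup_{y}(\partial\{u>y\}\cup\partial\{u<y\})$ ``coincides with the support of $\dif u$, hence is closed.'' This is precisely what needs to be proved and is not automatic: by the paper's Proposition on $\supp \dif u$, the set $S := \bigcup_y \partial\{u>y\}$ is only \emph{dense} in $\supp \dif u$, and the closure may contain points not lying on any $\partial\{u>y\}$ or $\partial\{u<y\}$ \emph{a priori}. Without closedness of $\bigcup_{N\in\mathcal S} N$ you cannot invoke Theorem~\ref{regularity theorem}, which takes that as a hypothesis. The paper handles this by introducing \emph{generalized level sets} (minimal hypersurfaces that are local $C^2$ limits of level sets), proving they cover $\overline{S}$ and are pairwise disjoint (via the maximum principle applied to the approximating graphs), and then showing by a transversal monotonicity argument that every generalized level set not of the form $\partial\{u>y\}$ must be a component of $\partial\{u<y\}$. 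This last step is nontrivial: it uses the divergence theorem in flow-box coordinates against the calibration $X$ from Theorem~\ref{relaxed formulation} to rule out oscillation of $\tilde u(k) := u(F_\alpha(k,\cdot))$ in the transverse variable. Relatedly, your statement that ``the sign of $\dif U_\alpha$ supplies the orientation'' presupposes that $\tilde u$ is locally monotone in the transverse coordinate; that monotonicity is a lemma, not a freebie.

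In direction~(2), you take a genuinely different route from the paper. You propose to build a Lipschitz unit divergence-free extension of the normal field and then calibrate. The paper does not do this: it instead argues by contradiction, choosing (via Proposition~\ref{minimal implies locally minimizing}) a ball $E$ where every leaf is area-minimizing, and then applying the coarea formula
$$
\int_E \star |\dif u| = \int_{-\infty}^\infty \int_E \star |\dif 1_{\{u>y\}}|\,\dif y \leq \int_{-\infty}^\infty \int_E \star |\dif 1_{\{u+v>y\}}|\,\dif y = \int_E \star |\dif u + \dif v|,
$$
using that each $1_{\{u>y\}}$ has least gradient because $\partial\{u>y\}$ is a union of area-minimizing leaves. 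This sidesteps the calibration problem entirely. Your calibration plan has a real obstruction: the vector field $X$ in Theorem~\ref{relaxed formulation}(3) is in general only $L^\infty$ and can be discontinuous, and the paper explicitly defers its study to a companion article; there is no reason a Lipschitz divergence-free unit extension should exist, and the claim that ``minimality of the leaves is exactly the condition'' making a Jacobian-rescaled pushforward divergence-free is not substantiated. If you wish to salvage the calibration approach you must either produce the extension (which seems hard at the stated regularity) or weaken to an $L^\infty$ calibration and redo the Anzellotti pairing argument carefully, at which point the paper's coarea argument is considerably shorter and cleaner.
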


In general, we must use both sublevel sets and superlevel sets in the statement of Theorem \ref{main thm}.
For example, the function
$$u(x, y) := 1_{\{x \leq 0\}} x$$
has least gradient on $\RR^2$.
Then $\{x = 0\}$ is a leaf of the lamination and bounds $\{u < 0\}$ but does not bound a superlevel set.
However, the set of leaves arising from sublevel sets but not superlevel sets is countable, and we can do away with sublevel sets entirely if we assume that $\dif u$ has full support, by an argument similar to \cite[Lemma 2.11]{górny2018}.

Even if we wished to allow for singular minimal hypersurfaces, we would not be able to establish a lamination if $d \geq 8$; in fact, if $x, y \in \RR^4$, then the function
$$u(x, y) := 1_{\{|x|^2 > |y|^2\}}$$
associated to the Simons cone has least gradient on $\RR^8$ \cite[Theorem A]{BOMBIERI1969}, but there is no way to establish flow box coordinates near $0$.
Since we use estimates on stable minimal hypersurfaces which depend strongly on the dimension, the following natural conjecture does not seem provable using the methods of this paper.

\begin{conjecture}
Suppose that $d \geq 8$. Then every function of locally least gradient $u: M \to \RR$ has a singular set $S_u$ of codimension $8$, such that on $M \setminus S_u$, the analogue of Theorem \ref{main thm} holds.
\end{conjecture}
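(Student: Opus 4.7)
The plan is to bootstrap the strategy of Theorem \ref{main thm} using the classical regularity theory of codimension-$1$ area-minimizing currents. If $u$ has locally least gradient, then for every $y \in \RR$ the set $\{u > y\}$ is a Caccioppoli set whose boundary is $(M,g)$-perimeter-minimizing (this is standard, equivalent to least gradient via the coarea formula). By De Giorgi, Federer, Simons, Bombieri--De Giorgi--Giusti, and Schoen--Simon, the reduced boundary $\partial^\ast \{u > y\}$ is a smooth embedded minimal hypersurface outside a closed singular set $\mathrm{Sing}_y$ of Hausdorff dimension at most $d-8$, and on any $W \Subset M \setminus \mathrm{Sing}_y$ its second fundamental form is bounded by the Schoen--Simon curvature estimate for stable minimal hypersurfaces. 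The same holds for $\partial^\ast \{u < y\}$.

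Let $S_u$ consist of all $p \in M$ such that in every neighborhood $U$ of $p$ there is some $y$ with $p \in \partial \{u > y\} \cup \partial \{u < y\}$ and with this level set failing to be a smooth minimal hypersurface of $U$, together with the (countably $\mathcal H^{d-1}$-rectifiable) jump set of $u$. By construction $S_u$ is closed, and on each compact $W \Subset M \setminus S_u$ every level set passing through $W$ is smooth with a uniform bound on $\|\Two\|_{C^0}$. Theorem \ref{regularity theorem} then produces a Lipschitz minimal lamination $\lambda$ on $M \setminus S_u$ whose leaves are the connected components of $\partial \{u > y\} \cap (M \setminus S_u)$ and $\partial \{u < y\} \cap (M \setminus S_u)$, $y \in \RR$. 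The measured oriented structure and the identity $\dif u = T_\mu$ are then obtained by running the proof of Theorem \ref{main thm} verbatim in flow boxes disjoint from $S_u$, since the argument there is local and uses only the properties of the regular part.

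The technical heart of the conjecture is the bound $\dim_{\mathcal H} S_u \leq d-8$. A single $\mathrm{Sing}_y$ has this dimension by Federer's reduction, but $S_u$ is an \emph{uncountable} union, so the bound must exploit the disjointness of the level sets: each $p$ outside the jump set lies on exactly one level set, and $p \in S_u$ forces that specific level set to be singular at $p$. The natural approach is a tangent-cone stratification of $S_u$ in the spirit of Almgren and Naber--Valtorta: decompose $S_u$ by the translation symmetry $k$ of blowup tangent cones and use a Reifenberg-type argument to bound the $k$-th stratum by Hausdorff dimension $k$, with Federer's reduction again forcing $k \leq d-8$ because each tangent is a stable minimizing cone.

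The main obstacle is that this stratification must be performed uniformly across all $y$ at once. For a fixed level the argument is classical; in the conjectural setting one needs a quantitative $\varepsilon$-regularity theorem which, at each $p \in M \setminus S_u$, produces a single scale on which \emph{every} level set through a nearby point is quantitatively regular. Such a uniform regularity theorem for a continuous family of area-minimizing boundaries foliating $M \setminus S_u$ is exactly what is missing, and without it one cannot assemble the local atlases from Theorem \ref{regularity theorem} into a global laminar atlas on $M \setminus S_u$. A secondary obstacle is the identification $\dif u = T_\mu$ as currents on all of $M$, which would require a removable-singularities statement for the $1$-Laplace equation across sets of Hausdorff codimension $8$; this is plausible but, as with the stratification, lies well beyond the techniques developed here.
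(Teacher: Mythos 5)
This is stated in the paper as a \emph{conjecture}, not a theorem: the paper explicitly remarks that ``the following natural conjecture does not seem provable using the methods of this paper.'' There is therefore no proof in the paper to compare yours against, and your proposal correctly ends by acknowledging that the key steps (a uniform $\varepsilon$-regularity across the one-parameter family of level sets, and a removable-singularity theorem for the $1$-Laplacian across codimension-$8$ sets) lie beyond present techniques. Your identification of the obstacle---that one level set has singular set of dimension $d-8$ by the classical Federer reduction, but $S_u$ is an uncountable union over $y$, so one needs to exploit the disjointness of the level sets via a stratification in the style of Naber--Valtorta---is accurate and in the same spirit as the paper's reason for leaving this as a conjecture.

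One substantive error in your setup: you include ``the jump set of $u$'' in $S_u$. The jump set of a $BV$ function is countably $\mathcal{H}^{d-1}$-rectifiable, i.e.\ of codimension $1$, so adjoining it would immediately destroy the desired bound $\dim_{\mathcal H} S_u \leq d - 8$. Moreover, the jump set is not singular from the lamination's point of view: in the proof of Theorem~\ref{main thm}, a jump of $u$ across a leaf $N$ simply corresponds to an atom of the transverse measure $\mu$ at the label of $N$, and such leaves are genuine leaves of the Lipschitz minimal lamination, not points where the flow-box structure fails. The singular set $S_u$ should consist only of points where some level set has a geometric singularity (a non-flat tangent cone in the Federer--Simons sense), plus, potentially, accumulation points of such singularities across nearby levels---which is exactly where your ``uniform across $y$'' difficulty lives.
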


The main ingredients in the proof of Theorem \ref{main thm} are Theorem \ref{regularity theorem}, the regularity theory of minimal hypersurfaces, and curvature estimates on stable minimal hypersurfaces due to Schoen, Simon, and Yau \cite{Schoen75,Schoen81}.
With these ingredients in place, it remains to show that the stability radii of the level sets of a function of locally least gradient are bounded from below, and locally the area of the level sets is bounded from above; this gives uniform curvature estimates on the level sets.

A similar result to Theorem \ref{main thm}, proven with somewhat different methods, was announced but never published by Auer and Bangert \cite{Auer01, Auer12}, who claimed to establish that a locally minimal $1$-current is Ruelle-Sullivan for a lamination in a weaker sense than ours.
In particular, it does not seem that one can extract Lipschitz regularity directly from their methods.

Our motivation for Theorem \ref{main thm} is to generalize the work of Daskalopoulos and Uhlenbeck on $\infty$-harmonic maps from a closed hyperbolic surface to $\Sph^1$ \cite{daskalopoulos2020transverse}, which associates to each such map a geodesic lamination $\lambda$ and function $v$ of locally least gradient on the universal cover such that $\dif v$ drops to a Ruelle-Sullivan current for a sublamination of $\lambda$.
Inspired by this theorem, Daskalopoulos and Uhlenbeck conjectured that for any function of locally least gradient on the hyperbolic plane $\Hyp^2$, $\dif u$ should be Ruelle-Sullivan for some (possibly not maximum-stretch) geodesic lamination \cite[Problem 9.4]{daskalopoulos2020transverse}, and conversely that if $T$ is a Ruelle-Sullivan current for some geodesic lamination, then local primitives of $T$ have locally least gradient \cite[Conjecture 9.5]{daskalopoulos2020transverse}.
Of course such results are special cases of Theorem \ref{main thm}.
We shall revisit the connection between Theorem \ref{main thm} and the $\infty$-Laplacian in \cite{BackusInfinityMaxwell1}, where we explain how one can view the $1$-Laplacian as the convex dual problem to the problem of constructing a calibration of a minimal lamination, which is given by a system of ``$\infty$-elliptic'' equations.

We stress that Theorem \ref{main thm} is an interior result.
We allow $M$ to have a boundary, infinite ends, or punctures, but do not study the limiting behavior of the lamination near those points.
The behavior of functions of least gradient near an infinite end is heavily constrained by the global behavior of area-minimizing hypersurfaces \cite[\S4.4]{górny2021}, which is outside the scope of this paper.

In \S\ref{1harmonic apps} we use Theorem \ref{main thm} prove a generalization of G\'orny's decomposition of functions of least gradient \cite[Theorem 1.2]{górny2017planar} to our setting.
A simplified version of the statement is as follows:

\begin{corollary}
Let $u$ be a function of locally least gradient, and $d \leq 7$.
Then we can locally write $u$ as the sum of an absolutely continuous function of least gradient, a Cantor function of least gradient, and a jump function of least gradient.
\end{corollary}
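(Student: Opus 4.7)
The plan is to apply Theorem \ref{main thm} to realize $u$ locally as a primitive of the Ruelle-Sullivan current of a measured minimal lamination, and then Lebesgue-decompose the transverse measure into its three classical pieces. Fix $x \in M$ and choose a contractible open neighborhood $U$ of $x$ on which $u|_U$ has least gradient; such a $U$ exists because $u$ has locally least gradient. By Theorem \ref{main thm}(1), there is a Lipschitz minimal lamination $\lambda$ on $U$ with area-minimizing leaves, together with a measured oriented structure $\mu = (\mu_\alpha)$ satisfying $\dif u = T_\mu$.

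Since each transverse measure $\mu_\alpha$ is a Radon measure on a compact subset $K_\alpha$ of the interval $I \subset \RR$, I would Lebesgue-decompose
\[
\mu_\alpha = \mu_\alpha^{\mathrm{ac}} + \mu_\alpha^{\mathrm{C}} + \mu_\alpha^{\mathrm{J}}
\]
into its absolutely continuous, continuous singular, and purely atomic parts, with respect to one-dimensional Lebesgue measure on $I$. The crucial point is that by Theorem \ref{regularity theorem}, the transition maps between flow boxes are bi-Lipschitz, and bi-Lipschitz maps between subsets of $\RR$ preserve Lebesgue null sets, atoms, and continuity of measures. Hence the decomposition is compatible across charts, yielding three measured laminations $(\lambda, \mu^{\mathrm{ac}})$, $(\lambda, \mu^{\mathrm{C}})$, $(\lambda, \mu^{\mathrm{J}})$ on $U$. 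Linearity of (\ref{RS current}) in the transverse measure then gives $T_\mu = T_{\mu^{\mathrm{ac}}} + T_{\mu^{\mathrm{C}}} + T_{\mu^{\mathrm{J}}}$.

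Because $U$ is contractible, $H^1(U, \RR) = 0$, so each of the three currents on the right admits a primitive; I choose these to be $u^{\mathrm{ac}}, u^{\mathrm{C}}, u^{\mathrm{J}}$ with $u = u^{\mathrm{ac}} + u^{\mathrm{C}} + u^{\mathrm{J}}$ on $U$. Since all three measured sublaminations inherit the area-minimizing leaves of $\lambda$, Theorem \ref{main thm}(2)(b) shows that each summand is a function of least gradient on $U$. To identify each summand with the correct BV type, I would unpack the definition of the Ruelle-Sullivan current in a flow box via (\ref{RS current}): pushing forward by the transverse coordinate and slicing shows that $\dif u^{\mathrm{ac}}$ is absolutely continuous with respect to the volume measure on $M$ (so $u^{\mathrm{ac}} \in W^{1,1}_{\loc}(U)$), that $\dif u^{\mathrm{C}}$ is a continuous singular Radon measure (so $u^{\mathrm{C}}$ is a Cantor function), and that $\dif u^{\mathrm{J}}$ is a countable sum of hypersurface masses on leaves of positive atomic weight (so $u^{\mathrm{J}}$ is a jump function). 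Uniqueness of the BV decomposition then matches each $u^{\bullet}$ with the corresponding piece of $u$.

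The main obstacle is the compatibility of the Lebesgue decomposition across flow boxes. This is not automatic: for a merely $C^0$ lamination it would fail, since a general homeomorphism of $\RR$ can scramble Lebesgue null sets into positive-measure Cantor sets, so the notion of ``Lebesgue absolutely continuous transverse measure'' is not intrinsic to a $C^0$ structure. The corollary therefore depends essentially on the Lipschitz regularity supplied by Theorem \ref{regularity theorem}, and not only on the correspondence established in Theorem \ref{main thm}.
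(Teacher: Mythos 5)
Your proposal is correct, and it takes a genuinely different route from the paper for the key step. The paper's proof of the Proposition in \S\ref{1harmonic apps} (of which this Corollary is the local simplification) decomposes $\tilde u^\alpha(k) := u(k,x)$ directly as a one-variable BV function in each flow box, checks mutual singularity of the three pieces via Fubini, and then proves each $u^\alpha_\sigma$ has least gradient by a direct mutual-singularity argument in the spirit of G\'orny: if some $u^\alpha_\sigma$ failed to have least gradient one could strictly decrease $\int \star|\dif u|$, contradicting that $u$ has least gradient. You instead round-trip through the main theorem: decompose the transverse measure $\mu_\alpha$, glue the pieces across charts (here you rightly stress that the bi-Lipschitz transitions of Theorem \ref{regularity theorem} are essential, and that a merely $C^0$ lamination would not suffice), and then invoke Theorem \ref{main thm}(2)(b), noting that the leaves are area-minimizing by Theorem \ref{main thm}(1)(d). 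Both work; your route is conceptually cleaner and makes the dependence on Lipschitz regularity explicit, while the paper's avoids re-entering the lamination machinery.

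One small formality in your argument needs attention. By Definition \ref{transverse measure definition}, a transverse measure is required to have \emph{full} support on the label set $K_\alpha$, and the Lebesgue components $\mu_\alpha^{\mathrm{ac}}, \mu_\alpha^{\mathrm{C}}, \mu_\alpha^{\mathrm J}$ will in general violate this (e.g.\ the atomic part is carried on a countable set). So $(\lambda, \mu^\sigma)$ is not literally a measured lamination as defined; you must pass to the sublamination $\lambda^\sigma$ carried by $\overline{\supp \mu^\sigma}$ before applying Theorem \ref{main thm}(2)(b), and then note that the primitives of $T_{\mu^\sigma}$ for these sublaminations still sum to $u$ up to additive constants. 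The paper's mutual-singularity argument sidesteps this bookkeeping by never forming the intermediate measured laminations. Your identification of the BV types of the summands is right but could be tightened by invoking the coarea formula for the bi-Lipschitz transverse coordinate explicitly, which is the content of the paper's one-line remark that Lipschitz isomorphisms preserve the three regularity classes.
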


% We would also like to highlight a possible further direction of study which we shall not address here.
% The associated parabolic flow to (\ref{1Laplacian}),
% \begin{equation}\label{level set flow}
% \partial_t u = |\nabla u| \nabla \cdot \left(\frac{\nabla u}{|\nabla u|}\right),
% \end{equation}
% is known as \dfn{level set flow} and acts on the level sets of $u$ by mean curvature flow.
% As such, it arises as a model of interfaces with minimal area, as a means of continuing mean curvature flow past its singular times, and in the \dfn{level set method} of computing minimal surfaces \cite{Chen89,Thomas05}.
% Previous work on the level set flow has been especially concerned with the evolution of the hypersurface $N(t) := \{u(t) = 0\}$ under the assumption that $N(0)$ is mean convex and $u$ is a (necessarily continuous) viscosity solution of (\ref{level set flow}) \cite{Evans91,Colding2016RegularityOT,sun2022generic}.
% It would be very interesting to prove a parabolic version of Theorem \ref{main thm} which asserts that under appropriate hypotheses on a measured oriented lamination $\lambda$ with Ruelle-Sullivan current $\dif u$, the level set flow of $u$ corresponds to a flow of $\lambda$ by mean curvature flow, even if $u$ is discontinuous.

%%%%%%%%%%%%%%%%%%
\subsection{Spaces of minimal laminations}\label{LamSpace section}
In the literature, there are at least three different topologies on the space of laminations on a Riemannian manifold $M$, which we now recall.

Thurston's geometric topology \cite[Chapter 8]{thurston1979geometry} says that a lamination $\lambda'$ is close to a lamination $\lambda$ if every leaf of $\lambda$ is close to a leaf of $\lambda'$ at least locally, and the same holds for their normal vectors $\normal$.

\begin{definition}
We define the basic open sets in \dfn{Thurston's geometric topology} to be defined by a lamination $\lambda$, $x \in \supp \lambda$, and $\varepsilon > 0$: the basic open set $\mathscr N(\lambda, x, \varepsilon)$ is the set of all laminations $\kappa$ such that there exists $y \in \supp \kappa \cap B(x, \varepsilon)$ such that the normal vectors are close: $\dist(\normal_\lambda(x), \normal_\kappa(y)) < \varepsilon$.
\end{definition}

A sequence of laminations $(\lambda_i)$ converges to a lamination $\lambda$ in Thurston's geometric topology iff, for every leaf $N$ of $\lambda$, every $x \in N$, and every $\varepsilon > 0$, there exists $i_{\varepsilon, x} \in \NN$ such that for every $i \geq i_{\varepsilon, x}$, $\supp \lambda_i$ intersects $B(x, \varepsilon)$, and for some $x_i \in B(x, \varepsilon) \cap \supp \lambda_i$,
$$\dist_{SM}(\normal_{\lambda_i}(x_i), \normal_\lambda(x)) < 2\varepsilon.$$
It is straightforward to show that Thurston's geometric topology does not depend on the choice of Riemannian metric on $M$, or the choice of extension of the distance function on $M$ to its sphere bundle $SM$, which are implicit in the statement thereof.
However, the limiting lamination is not unique, as if $\lambda_i \to \lambda$ and $\lambda'$ is a sublamination of $\lambda$, then $\lambda_i \to \lambda'$.
In particular, Thurston's topology is not Hausdorff, and we say that $\lambda$ is a \dfn{maximal limit} of a sequence $(\lambda_i)$ if $\lambda_i \to \lambda$ and for every $\lambda'$ such that $\lambda_i \to \lambda'$, $\lambda'$ is a sublamination of $\lambda$.

Independently of Thurston, Colding and Minicozzi \cite[Appendix B]{ColdingMinicozziIV} defined a sequence of laminations to converge ``if the corresponding coordinate maps converge;'' that is, if the laminar atlases converge.
This of course says nothing about the limiting set of leaves and in the sequel paper \cite{ColdingMinicozziV} they additionally impose that the sets of leaves converge ``as sets.''

In this paper we consider a similar condition to the one in \cite{ColdingMinicozziV}, which we believe to be more natural: that the laminar atlases converge and that the laminations themselves converge in Thurston's geometric topology.
To be more precise:

\begin{definition}
A sequence $(\lambda_i)$ of laminations \dfn{flow-box converges} in a function space $X$ to $\lambda$ if it converges in Thurston's geometric topology, and there exists a laminar atlas $(F_\alpha)$ for $\lambda$ such that for each $\alpha$, $F_\alpha$ and $(F_\alpha)^{-1}$ are limits in $X$ of flow boxes $F_\alpha^i$, $(F_\alpha^i)^{-1}$ in laminar atlases for $\lambda_i$.
\end{definition}

Let $C^{1-}$ denote the topology generated by the H\"older norms $\|u\|_{C^\theta}$, $\theta \in [0, 1)$.
Thus $u_n \to u$ in $C^{1-}$ iff for every $\theta \in [0, 1)$, $\|u_n - u\|_{C^\theta} \to 0$.
We shall mainly be interested in flow-box convergence in the $C^{1-}$ and tangential $C^\infty$ senses.

Next we recall convergence of laminations equipped with transverse measures.
This definition is equivalent to the definition of Thurston for measured geodesic laminations in $\Hyp^2$, see \S\ref{hyperbolic equivalence}.
In \S\ref{RS prelims} we define the Ruelle-Sullivan current of a possibly nonorientable measured lamination, and in Appendix \ref{portmanteau appendix} we review the vague topology.

\begin{definition}
A sequence of measured laminations $(\lambda_i, \mu_i)$ \dfn{converges} to $(\lambda, \mu)$ if their Ruelle-Sullivan currents vaguely converge.
\end{definition}

Filling in some of the details of the argument of Colding and Minicozzi \cite[Appendix B]{ColdingMinicozziIV}, it follows from the regularity theorem, Theorem \ref{regularity theorem}, that once we have a bound on the curvatures of the leaves, every sequence of laminations has convergent subsequences in each of the above modes of convergence.

\begin{mainthm}\label{compactness theorem}
Let $(\lambda_n)$ be a sequence of minimal laminations.
Assume that $(\lambda_n)$ has bounded curvature, and there exists a compact set $E \subseteq M$ such that for every $n$ and every leaf $N$ of $\lambda_n$, $N \cap E$ is nonempty. Then there is a minimal lamination $\lambda$ such that:
\begin{enumerate}
\item A subsequence of $(\lambda_n)$ converges in the $C^{1-}$ and tangentially $C^\infty$ flow box topology to $\lambda$.
\item $\lambda$ is a maximal limit of $(\lambda_n)$ in Thurston's geometric topology.
\item Let $(\mu_n)$ be a sequence of measures, with $\mu_n$ transverse to $\lambda_n$. Assume that $(T_{\mu_n})$ is vaguely bounded, and for some $\varepsilon > 0$, $\int_E \star |T_{\mu_n}| \geq \varepsilon$. Then there exists a sublamination $\lambda'$ and a transverse measure $\mu$ to $\lambda'$ such that $(\lambda_n, \mu_n) \to (\lambda', \mu)$.
\end{enumerate}
\end{mainthm}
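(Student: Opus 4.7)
The plan is to apply Theorem \ref{regularity theorem} to the sequence $(\lambda_n)$, which gives Lipschitz flow boxes with uniform constants $L, r$ depending only on the curvature bound $A$, the curvature $K$ of $M$, and the injectivity radius $i$. Since each leaf of $\lambda_n$ meets the compact set $E$, and each leaf has bounded second fundamental form, the portion of each leaf within distance $R$ of $E$ is contained in a compact set depending on $R$. Cover $M$ by a countable collection of balls $B_k$ of radius $r/2$ centered in a suitable enlargement of $E$. For each $n$, Theorem \ref{regularity theorem} supplies a flow box $F_{n,k}$ whose image covers $B_k$ (if the leaves of $\lambda_n$ meet $B_k$). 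By uniform Lipschitz and tangential smoothness bounds, Arzel\`a-Ascoli yields a subsequence along which each $F_{n,k}$ converges in $C^{1-}$ and tangentially in $C^\infty$ to a map $F_k$. A diagonal argument across $k$ produces the subsequence. The limit maps $F_k$ inherit the bi-Lipschitz and tangentially $C^\infty$ control, and the zero mean curvature condition on leaves passes to the limit because the second fundamental form of each leaf is computed from tangential derivatives of $F_{n,k}$. The limit atlas defines a Lipschitz minimal lamination $\lambda$; this proves (1).

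For (2), suppose $\lambda_n \to \lambda'$ in Thurston's topology with some basic neighborhood, so that there is a leaf $N'$ of $\lambda'$ and $x \in N'$ with $x_n \in \supp \lambda_n$ converging to $x$ and normals converging. Let $N_n$ be the leaf of $\lambda_n$ through $x_n$. Using the curvature bounds and Theorem \ref{regularity theorem}, we can represent each $N_n$ locally as a graph over a disk in $T_x M$ orthogonal to the limit normal, with uniform $C^\infty$ estimates. By Arzel\`a-Ascoli, a subsequence converges to a minimal hypersurface $N$ through $x$ with the prescribed normal. By the flow-box convergence from (1), $N$ must in fact be the leaf of the limit lamination $\lambda$ through $x$. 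Hence every leaf of $\lambda'$ is a leaf of $\lambda$, so $\lambda'$ is a sublamination, which gives maximality of $\lambda$.

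For (3), by Banach-Alaoglu the vaguely bounded sequence $T_{\mu_n}$ has a vaguely convergent subsequence with limit $T$. The assumption $\int_E \star |T_{\mu_n}| \geq \varepsilon$ together with the local mass bounds (needed for vague convergence of $|T_{\mu_n}|$ — obtained via Theorem \ref{regularity theorem} by writing each $T_{\mu_n}$ in uniform flow-box coordinates) shows that $T$ is nonzero. The support of $T$ is contained in the Hausdorff limit of $\supp \lambda_n$, hence in $\supp \lambda$ by (1). In each limiting flow box $F_k$ for $\lambda$, the pushforward $(F_k^{-1})_* (T_{\mu_n}|_{F_{n,k}})$ is a Radon measure on $I \times J$ that, by the Ruelle-Sullivan structure, factors as $\mu_{n,k} \otimes dV_J$ (up to the flow-box Jacobian). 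The $C^{1-}$ convergence of flow boxes converts vague convergence of $T_{\mu_n}$ into vague convergence of the transverse factors $\mu_{n,k}$ to some Radon measure $\mu_k$ on $I$. The $\mu_k$ assemble into a transverse measure $\mu$ on the sublamination $\lambda'$ of $\lambda$ whose support is the closure of $\bigcup_k F_k(\supp \mu_k \times J)$, because transition maps of $\lambda$ are $C^{1-}$-limits of those of $\lambda_n$, which preserve the $\mu_{n,k}$, so the limit transition maps preserve the $\mu_k$.

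The main obstacle is part (1): extracting a \emph{single} limit atlas requires the flow boxes of $\lambda_n$ to be chosen with geometric data (base points, normal directions) that admit a convergent subsequence. This hinges on using Theorem \ref{regularity theorem} not just pointwise but to ensure that, given a convergent sequence of points $x_n \in \supp \lambda_n$ with convergent normals, the flow boxes $F_{n,k}$ centered at $x_n$ can be taken to converge. A secondary subtlety, relevant to both (1) and (2), is that minimality — a condition involving second derivatives — must survive a convergence that is only $C^{1-}$ globally; this is resolved by noting that the tangential $C^\infty$ estimates of Theorem \ref{regularity theorem} promote convergence to $C^\infty$ along leaves, which suffices since the mean curvature is a tangential quantity.
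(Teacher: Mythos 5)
Your overall strategy---apply Theorem~\ref{regularity theorem} to get uniformly controlled flow boxes, extract limits via Arzel\`a-Ascoli, and diagonalize across a countable cover---matches the paper's. But there is a genuine gap in part~(1), and smaller gaps in parts~(2) and~(3).

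In part~(1), a laminar flow box is a pair $(F,K)$, not just a homeomorphism $F$. Your argument extracts a $C^{1-}$ and tangentially $C^\infty$ limit of the coordinate maps $F_{n,k}$, but it never addresses the convergence of the label sets $K^n_\alpha \subset I$, and a limiting atlas without limiting label sets defines no leaves at all. Three things must be done: (i)~one must extract, along the same subsequence, Hausdorff limits $K_\alpha$ of $K^n_\alpha$ (the paper does this via the compactness of the Hausdorff hyperspace of $I$); (ii)~one must verify that the candidate leaves---sets of the form $F_\alpha(\{k_\alpha\}\times J)$---glue coherently across charts into \emph{complete} embedded hypersurfaces (the paper's ``cocycle of labels''), since transition maps between limiting flow boxes could in principle fail to map labels to labels compatibly; and (iii)~one must show that some $K_\alpha$ is nonempty, for otherwise the limit is the empty set, which is not a lamination. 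Point~(iii) is exactly where the hypothesis that every leaf of $\lambda_n$ meets $E$ is used; your proposal invokes $E$ only for a tangential remark about local compactness of leaves, and never to rule out that the laminations escape to infinity. Without (i)--(iii), the claim ``The limit atlas defines a Lipschitz minimal lamination $\lambda$'' is unjustified.

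In part~(2), you show that for each $x \in N'$ there is a leaf $N(x)$ of $\lambda$ through $x$ with $\normal_{N'}(x) = \normal_{N(x)}(x)$. This is only a pointwise tangency; it does not by itself imply $N' \subseteq N(x)$. One needs a continuation step---the paper represents $N'$ in flow-box coordinates as a graph $k = g(y)$ with $\dif g \equiv 0$, hence constant---and your proposal omits it. In part~(3), the vague limit $T$ of $T_{\mu_n}$ is not automatically nonzero: vague convergence does not preserve total mass in general. The paper localizes via a finite subcover $A_E$, pigeonholes to find one chart $\alpha$ with $\int_{U_\alpha}\star|T_{\mu_n}| \geq \varepsilon'$, and then invokes the portmanteau theorem on the \emph{compact} transversal $I$ to deduce $\mu_\alpha(I) > 0$. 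Your phrase ``the local mass bounds \dots show that $T$ is nonzero'' skips this and would not survive scrutiny if the compactness of $I$ were not available.
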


To avoid trivialities (such as every sequence of laminations Thurston-converging to the empty lamination) we have taken as part of the definition that a lamination is nonempty.
Thus it is necessary to include a condition which prevents the laminations $\lambda_n$ from escaping to infinity (or, if $M$ is a bounded domain, accumulating on the boundary) and we accomplish this by requiring that every leaf of the $\lambda_n$s meet a compact set.
This technicality can be avoided when $M$ is a closed manifold, and the below corollary immediately follows:

\begin{corollary}
Assume that $M$ is a closed manifold.
Let $(\lambda_n)$ be a sequence of minimal laminations of bounded curvature on $M$.
Then there exists a minimal lamination $\lambda$ such that:
\begin{enumerate}
\item A subsequence of $(\lambda_n)$ converges in the $C^{1-}$ and tangentially $C^\infty$ flow box topology to $\lambda$.
\item $\lambda$ is a maximal limit of $(\lambda_n)$ in Thurston's geometric topology.
\item Let $(\mu_n)$ be a sequence of measures, with $\mu_n$ transverse to $\lambda_n$. Assume that for some $\varepsilon > 0$, $\varepsilon \leq \int_M \star |T_{\mu_n}| \leq 1/\varepsilon$. Then there exists a sublamination $\lambda'$ of $\lambda$, and a transverse measure $\mu$ to $\lambda'$, such that $(\lambda_n, \mu_n) \to (\lambda', \mu)$.
\end{enumerate}
\end{corollary}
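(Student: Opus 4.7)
The plan is to obtain the corollary as a direct specialization of Theorem \ref{compactness theorem}, taking advantage of the fact that when $M$ is a closed manifold we can set $E := M$ and all ``escape to infinity'' concerns disappear. I will simply verify that the hypotheses of the theorem are met in this setting for each of the three conclusions.

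For conclusions (1) and (2), I would set $E := M$. Since $M$ is a closed manifold, $E$ is compact. Every leaf $N$ of every $\lambda_n$ is, by the definition of a lamination, a nonempty subset of $M$, so $N \cap E = N \neq \emptyset$ is automatic. Thus the hypothesis of Theorem \ref{compactness theorem} is satisfied, and conclusions (1) and (2) of the corollary are precisely conclusions (1) and (2) of the theorem applied to this choice of $E$.

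For conclusion (3) I still take $E := M$, and I need to check the two hypotheses on $(T_{\mu_n})$: vague boundedness, and the lower bound $\int_E \star |T_{\mu_n}| \geq \varepsilon$. The lower bound is immediate from the corollary's assumption $\varepsilon \leq \int_M \star |T_{\mu_n}|$. For vague boundedness, note that since $M$ is compact, every compactly supported test $(d-1)$-form $\varphi$ on $M$ is globally defined with finite $C^0$ norm, and $\int_M \star |T_{\mu_n}| \leq 1/\varepsilon$ is precisely the bound $\|T_{\mu_n}\|(M) \leq 1/\varepsilon$ on total mass; hence
\[
\left| \int_M T_{\mu_n} \wedge \varphi \right| \leq \|\varphi\|_{C^0} \, \|T_{\mu_n}\|(M) \leq \|\varphi\|_{C^0}/\varepsilon,
\]
which is uniform in $n$ and gives vague boundedness. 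Theorem \ref{compactness theorem}(3) then produces the desired sublamination $\lambda'$ (which is a sublamination of the $\lambda$ extracted in (1), after passing to a further subsequence) and transverse measure $\mu$ with $(\lambda_n, \mu_n) \to (\lambda', \mu)$.

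There is essentially no obstacle here beyond the routine verification of vague boundedness from the total mass bound; the work has been absorbed into the statement of Theorem \ref{compactness theorem}, and the corollary is really just a packaging of the theorem for the most common case in which $M$ has no boundary and no ends.
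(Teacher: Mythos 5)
Your proof is correct and takes the same route as the paper, which simply asserts that the corollary "immediately follows" from Theorem~\ref{compactness theorem} once $M$ is closed; setting $E := M$ and noting that vague boundedness on a compact manifold is exactly a uniform total mass bound is the intended (and only needed) verification.
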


Since not every lamination admits a transverse measure, it is natural to allow $\lambda'$ to be a sublamination of $\lambda$ in Theorem \ref{compactness theorem}.
For example, there is a hyperbolic punctured torus $M$ and a geodesic lamination $\lambda$ on $M$ which is the Thurston limit of finite sums of closed geodesics, but has two leaves: a closed geodesic $\lambda'$, and a geodesic which accumulates on itself countably many times \cite[Figure 1(b)]{Dumas20}.
Thus any limiting measure must concentrate on $\lambda'$.

In \S\ref{relationships between modes}, we use Theorem \ref{compactness theorem} to explain how the above modes of convergence are related:

\begin{corollary}
Let $(\lambda_n, \mu_n)$ be a sequence of measured minimal laminations of bounded curvature and $(\lambda, \mu)$ a measured minimal lamination.
If $d \leq 7$ and $(\lambda_n, \mu_n) \to (\lambda, \mu)$, then $\lambda_n \to \lambda$ in the $C^{1-}$ and tangential $C^\infty$ flow box topologies, hence in Thurston's geometric topology.
\end{corollary}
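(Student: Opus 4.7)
The plan is to run a Urysohn-style subsequence argument: it suffices to show that every subsequence of $(\lambda_n)$ admits a sub-subsequence that flow-box converges to $\lambda$ in the $C^{1-}$ and tangentially $C^\infty$ topologies.

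Given a subsequence, I first apply Theorem \ref{compactness theorem} to extract a sub-subsequence (still denoted $(\lambda_n)$) that flow-box converges to some minimal lamination $\tilde\lambda$, and along which $(\lambda_n, \mu_n)$ measured-converges to some $(\tilde\lambda', \tilde\mu)$ with $\tilde\lambda'$ a sublamination of $\tilde\lambda$. The curvature hypothesis of Theorem \ref{compactness theorem} is given; the required compact set $E$ meeting every leaf of every $\lambda_n$ can be produced by fixing a compact exhaustion $M = \bigcup_k E_k$ and, for each $k$, restricting to the sublamination of $\lambda_n$ consisting of leaves meeting $E_k$, then diagonalizing in $k$. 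Leaves of $\lambda_n$ escaping to infinity are harmless because they carry vanishing measure in the vague limit. This localization step is the main obstacle I anticipate, as vague convergence of currents does not a priori prevent leaves from escaping to infinity; the hypothesis $d \leq 7$ likely enters here via Theorem \ref{main thm}, which lets one locally represent $T_\mu = \dif u$ for a function $u$ of locally least gradient and control the corresponding $u_n$ via uniform curvature estimates on their level sets.

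Next, from the two modes of convergence $(\lambda_n, \mu_n) \to (\lambda, \mu)$ and $(\lambda_n, \mu_n) \to (\tilde\lambda', \tilde\mu)$, uniqueness of vague limits gives $T_\mu = T_{\tilde\mu}$. The full-support convention on transverse measures (Definition \ref{transverse measure definition}) forces $\supp \lambda = \supp T_\mu = \supp T_{\tilde\mu} = \supp \tilde\lambda'$, and since a minimal lamination is determined by its underlying set of disjoint minimal hypersurfaces, Theorem \ref{regularity theorem} yields $\tilde\lambda' = \lambda$ as laminations; a local disintegration of $T_\mu$ against flow box coordinates then identifies $\tilde\mu = \mu$. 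Finally, since $\lambda = \tilde\lambda'$ is a sublamination of $\tilde\lambda$, every flow box $(F, K)$ for $\tilde\lambda$ restricts to a flow box $(F, K_\lambda)$ for $\lambda$ with $K_\lambda := \{k \in K : F(\{k\} \times J) \subseteq \supp \lambda\}$, using the same chart $F$; since $F$ is by construction a limit in $C^{1-}$ and tangentially $C^\infty$ of flow box charts for $\lambda_n$, this yields a laminar atlas witnessing $\lambda_n \to \lambda$ in the flow-box topology. Thurston convergence to $\lambda$ follows immediately, since any sublamination of a Thurston limit is itself a Thurston limit.
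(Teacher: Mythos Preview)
Your Urysohn-plus-compactness strategy is exactly what the paper does, but the paper organizes the identification of the limit differently, and that difference matters. The paper first proves $\lambda_n \to \lambda$ in Thurston's topology separately (Proposition~\ref{convergence of measures means Thurston convergence}), then discards stray leaves so that $\lambda$ is the \emph{maximal} Thurston limit; Theorem~\ref{compactness theorem} then hands back a sub-subsequential flow-box limit $\tilde\lambda$ which is itself maximal in Thurston's topology, and maximality on both sides forces $\tilde\lambda = \lambda$. Your route instead invokes part (3) of Theorem~\ref{compactness theorem} to get a measured sublamination $(\tilde\lambda',\tilde\mu)$ and then tries to match $\tilde\lambda'$ with $\lambda$ via $T_\mu = T_{\tilde\mu}$.

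The gap is in that last identification. From $T_\mu = T_{\tilde\mu}$ and the polar decomposition (Lemma in \S\ref{RS prelims}) you get $\mu = \tilde\mu$ as measures on $M$, hence $\supp\lambda = \supp\tilde\lambda'$, and $\normal_\lambda = \normal_{\tilde\lambda'}$ only $\mu$-almost everywhere. But equal support does not give equal leaves --- two transverse minimal foliations have the same support --- and Theorem~\ref{regularity theorem} reconstructs a lamination from its \emph{set of leaves}, not from its support. To upgrade ``$\normal_\lambda = \normal_{\tilde\lambda'}$ $\mu$-a.e.'' to ``everywhere on $\supp\lambda$'' you need $\normal_\lambda$ continuous, which requires a curvature bound on the leaves of $\lambda$. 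Nothing in the hypotheses gives this a priori: $\lambda$ is only assumed to be a measured minimal lamination. This is exactly where $d \le 7$ enters in the paper --- Lemma~\ref{limits of measured geodesic lams are geodesic} uses Theorem~\ref{main thm} to write $T_\mu$ locally as $\dif u$ with $u$ of least gradient, after which the estimates of \S\ref{level set estimates} bound the curvature of $\lambda$'s leaves and Theorem~\ref{regularity theorem} makes $\normal_\lambda$ Lipschitz. So the role of $d\le 7$ is not in your localization step (which can be handled simply by restricting to precompact open sets, since flow-box convergence is verified chart by chart), but in obtaining enough regularity on the \emph{target} lamination $\lambda$ to run the identification. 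Your proposal correctly anticipates that Theorem~\ref{main thm} is needed, but places it at the wrong step.
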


%%%%%%%%%%%%%%%%%%%%%%

\subsection{Notation and conventions}
The operator $\star$ is the Hodge star on $M$, thus $\star 1$ is the Riemannian measure of $M$.
We denote the musical isomorphisms by $\sharp, \flat$, and Sobolev spaces by $W^{s, p}$.
The manifold $\Ball^d$ is the unit ball in $\RR^d$, $\Sph^d$ is the unit sphere in $\RR^{d + 1}$, and $\Hyp^d$ is the hyperbolic space.

The $\delta$-dimensional Hausdorff measure is $\mathcal H^\delta$, normalized so that if $\delta$ is an integer, then $\mathcal H^\delta$ is $\delta$-dimensional Riemannian measure.
We write $\omega_\delta$ for $\mathcal H^\delta(\Ball^\delta)$.

If we specify that $M$ is a manifold with boundary, we specifically mean that $M$ has a smooth boundary.
By \dfn{injectivity radius} we mean the minimum of the injectivity radius and distance to $\partial M$.
If $E$ is a set of locally finite perimeter, we assume that the boundaries of $E$ in the sense of point-set topology and measure theory agree; see Appendix \ref{boundary conventions}.

By a \dfn{hypersurface} we mean a $C^1$ submanifold of codimension $1$.
We write $\normal_N$ for the normal vector (or conormal $1$-form) for a hypersurface $N$, $\nabla_N$ for the Levi-Civita connection, and $\Two_N := \nabla_N \normal_N$ for the second fundamental form if it is defined.

For a map $F: X \to Y$ between metric spaces, we write $\Lip(F)$ for its Lipschitz constant.
If $X, Y$ are connected Riemannian manifolds, one of which is $1$-dimensional, then we have $\Lip(F) = \|\dif F\|_{L^\infty}$.

We write $A \lesssim_\theta B$ or $A = O_\theta(B)$ to mean that for some constant $C \geq 1$, which only depends on $\theta$, $A \leq CB$.

%%%%%%%%%%%%%%%%%%%%%%%
\subsection{Outline of the paper}
The rest of the paper is organized as follows:
\begin{itemize}
\item In \S\ref{Regularity}, we prove the regularity theorem, Theorem \ref{regularity theorem}.
\item In \S\ref{Prelims}, we develop basic facts about Ruelle-Sullivan currents, and $1$-harmonic functions, that we shall use throughout the remainder of the paper. This section is independent of \S\ref{Regularity}, but depends on Appendix \ref{boundary appendix}.
\item In \S\ref{1harmonic sec}, we prove the equivalence of $1$-harmonic functions and measured oriented minimal laminations, Theorem \ref{main thm}, and apply it to study $1$-harmonic functions. This section relies on \S\ref{Regularity}, \S\ref{Prelims}, and Appendices \ref{boundary appendix} and \ref{locally minimizing appendix}.
\item In \S\ref{CompactnessSec}, we prove the compactness theorem, Theorem \ref{compactness theorem}, and explore the consequences for how the different modes of convergence are related to each other. This section applies \S\ref{Regularity}, \S\ref{Prelims}, and Appendix \ref{boundary appendix} for the proof of Theorem \ref{compactness theorem}, but the consequences of it also apply \S\ref{1harmonic sec}.
\item In Appendix \ref{boundary appendix}, we recall various technical results of geometric measure theory that we shall need.
\item In Appendix \ref{locally minimizing appendix}, we give a short proof that the radius of a ball in which a minimal hypersurface is area-minimizing is controlled from below by the curvature. The proof applies both \S\ref{Regularity} and \S\ref{Prelims}.
\end{itemize}

%%%%%%%%%%%%%%%%%%%%%%%%

\subsection{Acknowledgements}
I would like to thank Georgios Daskalopoulos for suggesting this project and for many helpful discussions; Victor Bangert for allowing me to read the draft \cite{Auer12}; Christine Breiner, Wojciech G\'orny, and the anonymous referee, who carefully read earlier drafts of this manuscript and suggested many improvements; and Chao Li and William Minicozzi, who suggested the references \cite{chodosh2022complete, Schoen75, Schoen81} which extended, whose the main result to its natural hypothesis $d \leq 7$, as an earlier draft only considered the case $d = 3$.

This research was supported by the National Science Foundation's Graduate Research Fellowship Program under Grant No. DGE-2040433.

%%%%%%%%%%%%%%%%%%%%%%%%%%%%%%%%%%%%%%%%%%
\section{Regularity of laminations}\label{Regularity}
\subsection{Elliptic estimates on leaves}\label{Leaf estimates}
Before we prove Theorem \ref{regularity theorem} we recall some well-known estimates on minimal surfaces in normal coordinates.
See Appendix \ref{minimal surfaces} for generalities on minimal surfaces.
Let $g$ be a metric on $\RR^{d - 1}_x \times \RR_y$ satisfying the normal coordinates condition $g - I = O(K_0(|x|^2 + y^2))$, and a curvature bound $\|\Riem_g\|_{C^0} \leq K_0$, where $I$ is the identity matrix.
For a function $u \in C^1(4\Ball^{d - 1})$, let
$$Pu(x) = F(x, u(x), \nabla u(x), \nabla^2 u(x)) = 0$$
be the minimal surface equation.
Then by \cite[(7.21)]{colding2011course}, the coefficient $F$ has the form
$$F(x, y, \xi, H) = \tr H + O((K_0(|x| + |y|) + |\xi|)(1 + |H|))$$
at least if $K_0(|x| + |y|) + |\xi|$ is small enough.
Thus if $\|u\|_{C^0(4\Ball^{d - 1})} \leq 10$ and $K_0$ and $\|\dif u\|_{C^0(4\Ball^{d - 1})}$ are small enough, the minimal surface equation is uniformly elliptic, so that by Schauder estimates \cite[Theorem 6.2]{gilbarg2015elliptic}, for any $r \geq 0$,
\begin{equation}\label{norms on uk}
\|u\|_{C^r(3\Ball^{d - 1})} \lesssim_r 1.
\end{equation}

\begin{lemma}
Suppose that $u_2 \geq u_1$ satisfy $Pu_1 = Pu_2 = 0$ on $4\Ball^{d - 1}$ and $v := u_2 - u_1$.
Then if $\|u\|_{C^0(4\Ball^{d - 1})} \leq 10$ and $K_0$ and $\|\dif u\|_{C^0(4\Ball^{d - 1})}$ are small enough,
\begin{equation}\label{Schauder Harnack}
	\|\dif v\|_{C^0(\Ball^{d - 1})} \lesssim \sup_{2\Ball^{d - 1}} v \lesssim \inf_{\Ball^{d - 1}} v.
\end{equation}
\end{lemma}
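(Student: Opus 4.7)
The plan is to linearize the minimal surface equation between the two ordered solutions and then invoke the standard Harnack inequality and interior gradient estimates for the resulting linear elliptic operator.

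First I would write down the linearization. Since $Pu_1=Pu_2=0$, setting $u_t := tu_2+(1-t)u_1$ and differentiating under the integral sign gives
\begin{equation*}
0 = F(\cdot,u_2,\nabla u_2,\nabla^2 u_2) - F(\cdot,u_1,\nabla u_1,\nabla^2 u_1) = L v,
\end{equation*}
where $L$ is the linear operator
\begin{equation*}
L v = a^{ij}(x)\,\partial_i\partial_j v + b^i(x)\,\partial_i v + c(x)\,v,
\end{equation*}
with coefficients
\begin{equation*}
a^{ij} = \int_0^1 \partial_{H_{ij}} F(x,u_t,\nabla u_t,\nabla^2 u_t)\,\dif t,
\end{equation*}
and analogous expressions for $b^i, c$ from $\partial_\xi F$, $\partial_y F$. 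From the expansion $F(x,y,\xi,H) = \tr H + O((K_0(|x|+|y|)+|\xi|)(1+|H|))$, the matrix $\partial_H F$ is within $O(K_0(|x|+|y|)+|\xi|)$ of the identity, while $\partial_\xi F$ and $\partial_y F$ are bounded in terms of $K_0$ and $|H|$. By (\ref{norms on uk}) applied to both $u_1$ and $u_2$, and hence to $u_t$, each Hessian $\nabla^2 u_t$ is bounded uniformly, and so, provided $K_0$ and $\|\dif u\|_{C^0(4\Ball^{d-1})}$ are small enough, the coefficient $a^{ij}$ lies within a small neighborhood of the identity. Thus $L$ is uniformly elliptic with $C^\infty$ coefficients bounded in every $C^r$ norm on $3\Ball^{d-1}$, again with constants depending only on $r$.

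Next, since $v\geq 0$ solves $Lv=0$ on $3\Ball^{d-1}$, the classical Krylov--Safonov / Moser Harnack inequality (e.g.\ \cite[Theorem 8.20]{gilbarg2015elliptic}) applied on balls of radius $3$ and $2$ yields
\begin{equation*}
\sup_{2\Ball^{d-1}} v \lesssim \inf_{2\Ball^{d-1}} v \leq \inf_{\Ball^{d-1}} v,
\end{equation*}
which is the second inequality. For the first, I would apply interior Schauder estimates \cite[Theorem 6.2]{gilbarg2015elliptic} for the same linear operator on the pair of balls $2\Ball^{d-1}\supset \Ball^{d-1}$, giving
\begin{equation*}
\|v\|_{C^{1,\alpha}(\Ball^{d-1})} \lesssim \|v\|_{C^0(2\Ball^{d-1})} = \sup_{2\Ball^{d-1}} v,
\end{equation*}
which majorizes $\|\dif v\|_{C^0(\Ball^{d-1})}$.

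The only real subtlety is verifying the uniform ellipticity of $L$ and the boundedness of its coefficients, which reduces to the smallness of $K_0$ and $\|\dif u\|_{C^0}$ together with the a priori Schauder bound (\ref{norms on uk}); once this is secured, the Harnack inequality and Schauder estimates for linear equations furnish both inequalities immediately. No delicate nonlinear argument is required because the difference of two ordered solutions of a smooth fully nonlinear equation automatically satisfies a linear equation of the same ellipticity class.
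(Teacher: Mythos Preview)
Your proposal is correct and follows essentially the same route as the paper: the paper cites \cite[Theorem 7.3]{colding2011course} for the existence of a linear uniformly elliptic operator $Q$ annihilating $v$ (which is exactly your integral mean-value linearization), and then invokes Schauder estimates and the Harnack inequality. One minor remark: since your operator $L$ is written in non-divergence form, \cite[Corollary 9.25]{gilbarg2015elliptic} is the more directly applicable Harnack reference (as the paper uses) rather than Theorem 8.20, though with smooth coefficients the distinction is immaterial.
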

\begin{proof}
By the proof of \cite[Theorem 7.3]{colding2011course}, there exists a linear partial differential operator $Q$ such that $Qv = 0$, and if $\|u\|_{C^0(4\Ball^{d - 1})} \leq 10$ and $K_0$ and $\|\dif u\|_{C^0(4\Ball^{d - 1})}$ are small enough, then on $3\Ball^{d - 1}$, $Q$ is uniformly elliptic, and the coefficients are bounded in $C^1$.
The claim now follows from Schauder estimates and the Harnack inequality \cite[Corollary 9.25]{gilbarg2015elliptic}.
\end{proof}

For the remainder of \S\ref{Regularity}, we fix a constant $K_0$ satisfying the hypotheses of the above lemma.

The Harnack inequality (\ref{Schauder Harnack}) implies the maximum principle for minimal hypersurfaces, which we shall need later in the paper.
This maximum principle is well-known on euclidean space \cite[Corollary 1.28]{colding2011course}, but appears to be folklore on Riemannian manifolds.

\begin{proposition}[maximum principle]\label{maximum principle}
Let $N_1, N_2$ be complete, connected minimal hypersurfaces in a geodesic ball, such that $N_1 \cap N_2$ is nonempty, and such that $N_2$ lies on one side of $N_1$.
Then $N_1 = N_2$.
\end{proposition}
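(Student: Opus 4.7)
My plan is to reduce the global statement to a local one and then invoke the strong maximum principle for a linear elliptic equation, exactly as is done in euclidean space. Because $N_1$ and $N_2$ are complete and connected hypersurfaces of the same dimension, it suffices to prove the local statement: for any $p \in N_1 \cap N_2$, there is a neighborhood of $p$ on which $N_1 = N_2$. Indeed, this would make $\{p \in N_1 : N_1 = N_2 \text{ in a neighborhood of } p\}$ open in $N_1$, and it is closed by continuity and nonempty by hypothesis, so it equals $N_1$; and then $N_1 \subseteq N_2$ together with completeness/connectedness forces $N_1 = N_2$.

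To prove the local statement, fix $p \in N_1 \cap N_2$. Since $N_2$ lies on one side of $N_1$, the tangent planes must coincide at $p$ (otherwise $N_2$ would cross $N_1$). Introduce Fermi coordinates $(x, y) \in \RR^{d - 1} \times \RR$ adapted to $N_1$ near $p$, rescaling so that the coordinate ball $4\Ball^{d - 1} \times (-4, 4)$ is contained in the geodesic ball and the ambient curvature bound satisfies the smallness condition imposed on $K_0$ in \S\ref{Leaf estimates}. In these coordinates $N_1 = \{y = 0\}$ and, after possibly shrinking the chart, $N_2$ is the graph of a nonnegative $C^2$ function $u_2$ over a neighborhood of $0$; both $u_1 \equiv 0$ and $u_2$ satisfy the minimal surface equation $Pu_i = 0$, and by the Schauder estimate (\ref{norms on uk}), $u_2$ has the regularity required for the linearization argument to apply.

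The key step is then to apply the linearization $Q$ of $P$ around $u_1$, which is a linear elliptic operator with $C^1$ coefficients, as in the proof of (\ref{Schauder Harnack}): the difference $v = u_2 - u_1 = u_2$ satisfies $Qv = 0$. Since $v \geq 0$ and $v(0) = 0$, the Harnack inequality (\ref{Schauder Harnack}) — or equivalently the strong maximum principle for $Q$ \cite[Corollary 9.25]{gilbarg2015elliptic} — forces $v \equiv 0$ on the interior coordinate ball. Hence $N_1$ and $N_2$ coincide on a neighborhood of $p$, completing the reduction.

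The main obstacle is a bookkeeping one: one has to verify that after a single rescaling the Fermi chart meets the hypotheses on $K_0$, $\|u_i\|_{C^0}$, and $\|\dif u_i\|_{C^0}$ used in \S\ref{Leaf estimates}, so that $Q$ is uniformly elliptic with controlled coefficients. Since $u_1 \equiv 0$ and $u_2(0) = 0$, $\nabla u_2(0) = 0$ (tangency), standard continuity of $u_2$ and $\nabla u_2$ makes these smallness conditions automatic on a sufficiently small subchart, so the argument goes through without further difficulty.
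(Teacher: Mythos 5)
Your proof is correct and essentially the same as the paper's: establish tangency at $p \in N_1 \cap N_2$, rescale to make the curvature small, write $N_1, N_2$ as ordered minimal graphs, apply the Harnack inequality (\ref{Schauder Harnack}) (equivalently the strong maximum principle for the linearization $Q$) to force $u_2 - u_1 \equiv 0$ near $p$, and conclude by the open-closed ``bootstrapping'' argument. The only cosmetic difference is your use of Fermi coordinates (so $u_1 \equiv 0$) in place of the paper's normal coordinates; note that Fermi coordinates do not literally satisfy the normal-coordinates condition $g - I = O(K_0(|x|^2 + y^2))$ used to set up \S\ref{Leaf estimates}, but since only uniform ellipticity and $C^1$ coefficient bounds for $Q$ are actually needed, this is harmless.
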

\begin{proof}
Since $N_2$ lies on one side of $N_1$, for every $p \in N_1 \cap N_2$, $N_2$ must be tangent to $N_1$ at $p$.
By rescaling, we may assume that $\|\Riem_g\|_{C^0} \leq K_0$.
Working in normal coordinates at $p$, chosen so that $N_1$ is the graph of a function $u_1$ on some ball $B(p, r)$, it follows (possibly after shrinking $r$) that $N_2$ is the graph of a function $u_2 \geq u_1$.
Then $v := u_2 - u_1$ satisfies $v(0) = 0$, so by (\ref{Schauder Harnack}), for some $\delta > 0$ depending on $r$, and every $x$ such that $|x| < \delta$, $v(x) = 0$.
So the germs of $N_1$ and $N_2$ at $p$ agree, and the result follows by a bootstrapping argument.
\end{proof}

\subsection{A preliminary choice of coordinates}
We now construct normal coordinates in which the leaves of $\lambda$ are $C^1$-close to hyperplanes $\{y = y_0\}$.
The utility of this fact is that, if $f: \RR^{d - 1}_x \to \RR_y$, and its graph has normal vector $\normal$, then
\begin{equation}\label{nabla as a normal}
	\normal = \frac{\partial_y f - \nabla f}{\sqrt{1 + |\nabla f|^2}}.
\end{equation}
So if $Pf = 0$, then the leaves of $\lambda$ are minimal graphs which are small in $C^1$ and so we may apply (\ref{Schauder Harnack}) uniformly among all of the leaves at once.

A similar result was proven by \cite{Solomon86} (without the quantitative dependence) using the regularity theory for integral flat convergence of minimal currents \cite[Theorem 5.3.14]{federer2014geometric}.
We did not do this because it does not seem particularly easy to recover quantitative bounds from the highly general theory of \cite[Chapter 5]{federer2014geometric}.

\begin{lemma}\label{existence of tubes}
	Let $N$ be an embedded $C^2$ hypersurface in $\RR^d = \RR^{d - 1}_x \times \RR_y$ which is tangent to $\{y = 0\}$ at the origin.
	If $\|\Two_N\|_{C^0} \leq \frac{1}{8}$, then the connected component of $N \cap B(0, 1)$ containing $0$ is the graph over $\{y = 0\}$ of a function $f$ with
	$$|f(x)| \leq \|\Two_N\|_{C^0} |x|^2.$$
\end{lemma}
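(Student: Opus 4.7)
My approach would be to parametrize $\tilde N$, the component of $N \cap B(0,1)$ containing $0$, by the intrinsic geodesic exponential map $\Phi(w) := \exp^N_0(w)$, where we identify $T_0 N = \{y = 0\}$ with $\RR^{d-1}$, and then deduce the graph property by composing with the projection $\pi : (x,y) \mapsto x$.

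The core estimate comes from a Taylor expansion. For a unit $v \in T_0 N$, the geodesic $\gamma_v(s) := \Phi(sv)$ has ambient acceleration given by the Gauss formula $\ddot\gamma_v(s) = \Two_N(\dot\gamma_v, \dot\gamma_v)\normal_N$, of Euclidean norm at most $B := \|\Two_N\|_{C^0} \leq 1/8$. Writing $\gamma_v(s) = (x_v(s), y_v(s))$ and using $v \in \{y = 0\}$, the initial conditions $y_v(0) = \dot y_v(0) = 0$ and $x_v(0) = 0$, $\dot x_v(0) = v$ give via integration
$$|y_v(s)| \leq \tfrac{B s^2}{2}, \qquad |x_v(s) - sv| \leq \tfrac{B s^2}{2}.$$
Consequently $\Psi := \pi \circ \Phi$ satisfies $|\Psi(w) - w| \leq B|w|^2/2$, and a similar estimate on $d\Psi$ (obtained by differentiating the geodesic equation in $v$) shows that $\Psi$ is a diffeomorphism on a ball slightly larger than the unit ball in $T_0 N$. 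I would then define $f := y \circ \Phi \circ \Psi^{-1}$ on $\Psi(\{|w| < s_*\})$ for an appropriate $s_* \leq 1$, producing a graph inside $\tilde N$.

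To upgrade this local graph to all of $\tilde N$, I would use a connectedness argument: the set of $p \in \tilde N$ lying on $\mathrm{graph}(f)$ is open in $\tilde N$ (by the local diffeomorphism property combined with the tilting bound $|\normal_N(p) - e_d| \leq Bs$ valid along any geodesic segment), and closed in $\tilde N$ because the ODE estimates above keep the preimage under $\Phi$ bounded as one approaches a limit point of $\tilde N$. Since $0$ lies in the graph and $\tilde N$ is connected, this gives $\tilde N = \mathrm{graph}(f)$. The quadratic bound is then easy: from $|x| = |x_v(s)| \geq s(1 - Bs/2) \geq 15s/16$ (since $s < 16/15$ is forced by $\Phi(sv) \in B(0,1)$) we get $s \leq 16|x|/15$, and so
$$|f(x)| = |y_v(s)| \leq \tfrac{B s^2}{2} \leq \tfrac{B}{2}\bigl(\tfrac{16}{15}\bigr)^2 |x|^2 = \tfrac{128}{225} B|x|^2 \leq B|x|^2.$$

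The main obstacle I would expect is the surjectivity step — ruling out extra sheets of $N$ that enter $\tilde N$ from around the side of the unit ball and fail to be in the image of $\Phi$. This is exactly where the smallness of $B$ enters nontrivially: any such sheet, being connected to $0$ through $\tilde N$, would have to be reached by a path along which the normal has rotated by less than $1/8$, so the tangent planes along the path stay nearly horizontal, and one is back in a regime where the geodesic continuation argument applies. Making this observation rigorous without circularity (e.g.\ by arguing through the maximal subset of $\tilde N$ on which $\pi|_{\tilde N}$ is known injective, and showing its complement would force $\normal_N$ to rotate too much) is the delicate point.
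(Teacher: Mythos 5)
Your parametrization by the intrinsic exponential map is a genuinely different route from the paper's proof, which stays with the graph function $f$ throughout: the paper relates $\nabla^2 f$ to $\Two_N$ via the formula expressing $\normal$ in terms of $\nabla f$, and runs a continuity/bootstrap argument starting from $|\nabla f| \leq \tfrac{1}{2}$, closing it to $|\nabla f| \leq \tfrac{1}{4}$, then integrating twice. Your ODE estimates on the geodesics $\gamma_v$ are correct and they do deliver the quadratic bound $|f(x)| \leq \|\Two_N\|\,|x|^2$. (The claim ``$s < 16/15$'' is a small slip: $|\gamma_v(s)| < 1$ together with $|x_v(s)| \geq s(1 - Bs/2)$ only forces $s < 8 - 4\sqrt{3}$, which is slightly larger than $16/15$; the final constant still works out, but the intermediate constants should be adjusted.)

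The genuine gap is in the surjectivity heuristic at the end, which you are right to flag. You assert that an extra sheet of $\tilde N$ ``would have to be reached by a path along which the normal has rotated by less than $1/8$.'' But the rotation of $\normal$ along a path $\gamma$ in $N$ is controlled by $\|\Two_N\|$ times the \emph{intrinsic} length $L(\gamma)$, and the intrinsic length of a path from $0$ to $p \in \tilde N$ is not a priori bounded by $1$ just because the path stays in $B(0,1)$: a priori $\tilde N$ could spiral and have large intrinsic diameter, which is exactly the behavior you are trying to exclude, so as phrased the argument is circular. The fix is to reason through the exponential map rather than arbitrary paths: along a geodesic from $0$ one has $\frac{d}{ds}|\gamma_v(s)|^2 = 2\,\gamma_v\cdot\dot\gamma_v = 2s(1 + O(Bs)) > 0$ on the relevant range, so once a geodesic exits $B(0,1)$ it does not re-enter. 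This gives both boundedness and star-shapedness of the parameter domain $W := \{w : \Phi(tw/|w|) \in B(0,1)\text{ for }0 \leq t \leq |w|\}$, from which the open--closed argument yields $\Phi(W) = \tilde N$; injectivity of $\Psi = \pi\circ\Phi$ on the star-shaped $W$ then follows from $\|d\Psi - I\|_{C^0} < 1$ by integrating $d\Psi$ along rays from $0$. The paper's bootstrap avoids all of this bookkeeping because it never has to show a reparametrization is a bijection: $|\nabla f| \to \infty$ is the only possible obstruction to continuing the graph representation, and the bootstrap rules that out directly.
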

\begin{proof}
	Near $0$, $N$ can be represented a graph $\{y = f(x)\}$, since it is tangent to $\{y = 0\}$.
	This representation is valid on the component of the set $\{|\nabla f(x)| < \infty\}$ containing $0$, and it is related to the unit normal by (\ref{nabla as a normal}).
	Rearranging (\ref{nabla as a normal}) and taking derivatives,
	$$-\nabla^2 f(x) = \frac{\nabla \normal(x, f(x)) \cdot (\partial_x \otimes \partial_x + \nabla f(x) \otimes \partial_y)}{\sqrt{1 + |\nabla f(x)|^2}} - \frac{\nabla^2 f(x) \cdot (\nabla f(x) \otimes \normal(x, f(x)))}{(1 + |\nabla f|^2)^{3/2}}.$$
	Here $-\nabla^2$ denotes the negative Hessian, not the Laplacian.
	Since
	$$|\partial_x \otimes \partial_x + \nabla f(x) \otimes \partial_y| \leq \sqrt{1 + |\nabla f(x)|^2},$$
	and $\nabla \normal = \Two_N$, we conclude
\begin{equation}\label{bound Hessian by Two}
	|\nabla^2 f(x)| \leq |\Two_N(x, f(x))| + |\nabla^2 f(x)| |\nabla f(x)|.
\end{equation}
	In order to control the error terms in (\ref{bound Hessian by Two}), we make the \dfn{bootstrap assumption}
\begin{equation}\label{bootstrap}
	|\nabla f(x)| \leq \frac{1}{2},
\end{equation}
	which is at least valid in some small neighborhood $B_R$ of $0$ since (\ref{nabla as a normal}) and the fact that $N$ is tangent to $\{y = 0\}$ at $0$ imply that $\nabla f(0) = 0$.
	By (\ref{bound Hessian by Two}),
$$|\nabla^2 f(x)| \leq 2|\Two_N(x, f(x))|,$$
	and integrating this inequality one obtains for $|x| \leq R$ that
\begin{equation}\label{closed bootstrap}
	|\nabla f(x)| \leq 2|\Two_N(x, f(x))| |x| \leq \frac{1}{4}.
\end{equation}
	In particular, since $\nabla f \in C^1$, either $R \geq 1$ or there exists $R' > R$ such that the bootstrap assumption (\ref{bootstrap}) is valid on $B_{R'}$.
	Therefore (\ref{bootstrap}) is valid with $R = 1$.
	Integrating (\ref{closed bootstrap}), we obtain the desired conclusion.
\end{proof}

\begin{lemma}\label{lams have C0 fields}
	Suppose that $\delta > 0$ is small enough depending on $K$.
	Then there exists $r = r(\delta, K, i, A) > 0$ such that for every disjoint family of hypersurfaces $\mathcal S$ satisfying the curvature bound (\ref{curvature bound in regularity}) and every $p \in \bigcup_{N \in \mathcal S} N$, we can choose normal coordinates $(x, y) \in \RR^{d - 1} \times \RR$ based at $p$ so that
\begin{equation}\label{normal is basically dy}
	\sup_{N \in \mathcal S} \|\normal_\lambda - \partial_y\|_{C^0(B(p, r))} \leq \delta.
\end{equation}
\end{lemma}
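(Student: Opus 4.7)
The plan is a compactness/contradiction argument, whose heuristic is that once we zoom in to a scale much smaller than $1/(A+K)$, the leaves of $\mathcal S$ resemble affine hyperplanes; two disjoint leaves with very different normals at nearby points would produce nonparallel limiting hyperplanes, which must meet and thereby force the original leaves to meet for $k$ large, contradicting disjointness.

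I would begin by negating the conclusion: fix $\delta_0 > 0$ (small in terms of $K$) such that for every $r>0$ there is a family $\mathcal S_r$ and a point $p_r \in \bigcup_{N\in \mathcal S_r} N$ for which (\ref{normal is basically dy}) fails. Choosing $r_k \to 0$, let $p_k$ lie on some leaf $N_k^0 \in \mathcal S_k$, and pick $q_k \in N_k \cap B(p_k, r_k)$ for a leaf $N_k \in \mathcal S_k$ violating the normal bound in normal coordinates making $T_{p_k}N_k^0 = \{y=0\}$. Setting $s_k := d(p_k, q_k)$, the curvature bound (\ref{curvature bound in regularity}) makes the normal of any single leaf rotate by at most $O(A s_k)$, so for $k$ large $N_k \ne N_k^0$, and hence $s_k>0$ with $s_k \leq r_k \to 0$.

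The main step is to dilate. In the normal coordinates at $p_k$, consider the metric $\tilde g_k := s_k^{-2} g$ on $B(0, i/s_k)$. Under this dilation the ambient curvature becomes $s_k^2 K$ and the second fundamental forms become $s_k A$, both vanishing as $k\to \infty$. Let $\tilde N_k^0, \tilde N_k, \tilde q_k$ denote the rescaled objects, so $|\tilde q_k|=1$ and $\tilde{\vec n}_k := \normal_{\tilde N_k}(\tilde q_k)$ still satisfies $|\tilde{\vec n}_k - \partial_y| > \delta_0$. Fix $R$ large depending only on $\delta_0$ (to be chosen below). For $k$ large, the hypotheses of Lemma \ref{existence of tubes} hold on $B(0,R)$ both for $\tilde N_k^0$ (tangent to $\{y=0\}$ at origin) and, in normal coordinates adapted to $\tilde{\vec n}_k$, for $\tilde N_k$. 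Combined with the Schauder bounds (\ref{norms on uk}), the component of $\tilde N_k^0$ through the origin is a graph $y = f_k^0(x)$ with $f_k^0 \to 0$ in $C^1(B(0, R-1))$, and the component of $\tilde N_k$ through $\tilde q_k$ is a graph over $T_{\tilde q_k}\tilde N_k$ converging in $C^1$ to its tangent plane at $\tilde q_k$. Passing to a further subsequence, $\tilde q_k \to q_*$ and $\tilde{\vec n}_k \to \vec n_*$ with $|\vec n_*-\partial_y|\geq \delta_0$, and the limiting objects are $\{y=0\}$ and the affine hyperplane $P_* := \{z : \vec n_*\cdot(z-q_*)=0\}$. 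Decomposing $\vec n_* = \alpha\partial_y + \beta e$ with $e$ a horizontal unit vector gives $|\beta|\gtrsim \delta_0$, so $P_* \cap \{y=0\}$ contains a point $z_*$ with $|z_* - q_*| \leq |(q_*)_y|/|\beta| \lesssim 1/\delta_0$. This is the step that dictates the choice $R \gg 1/\delta_0$, which places $z_*$ in the interior of the region of $C^1$ convergence. Since $\{y=0\}$ and $P_*$ meet transversally at $z_*$, the stability of transverse intersections yields $\tilde z_k \in \tilde N_k^0 \cap \tilde N_k$ near $z_*$ for $k$ large, contradicting the disjointness of $\mathcal S_k$.

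The only genuine obstacle is bookkeeping: one must simultaneously track that the rescaled chart grows ($i/s_k \to \infty$), that the rescaled curvatures $s_k A$ and $s_k^2 K$ vanish, and that the radius needed to see the nonparallel limits intersect depends only on $\delta_0$ (through $1/|\beta|$), not on $A$, $K$, or $i$. Once $R = R(\delta_0)$ is fixed at the outset, all the smallness conditions of Lemma \ref{existence of tubes} and of the Schauder estimates (\ref{norms on uk}) hold for $k$ large, and the argument closes; this yields the existence of $r = r(\delta, K, i, A)$, though with no explicit constant, as the proof proceeds by contradiction.
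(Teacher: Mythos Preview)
Your argument is correct and rests on the same geometric idea as the paper's: two disjoint leaves whose normals differ by at least $\delta$ at nearby points have tangent planes that meet transversally nearby, and since the leaves are trapped in thin tubes about those tangent planes (Lemma \ref{existence of tubes}), they must themselves intersect. The difference lies only in how the contradiction is packaged. The paper works directly and quantitatively: after a single rescaling to normalize $A$, $K$, $i$, it fixes $r < s\delta^2$ for a small absolute $s$, uses the law of cosines to convert $|\normal_{N'}(q') - \partial_y| > \delta$ into an angle bound $\cos\theta < 1 - \delta^2/4$ between the tangent planes $T_q N$ and $T_{q'} N'$, locates their intersection inside $B(p, s/2)$ via a second law-of-cosines computation on the triangle $\Delta(q,q',r)$, and then observes that any graphs lying in the tubes of width $s^2/16$ about those planes must cross. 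This yields the explicit dependence $r \sim \delta^2$ (times constants in $A,K,i$). Your blow-up argument instead rescales by $s_k^{-1}$ so that both curvatures vanish, passes to a subsequential limit in which the two leaves literally become nonparallel affine hyperplanes, and reads off the transverse intersection there; it is cleaner to narrate but, as you acknowledge, forfeits any explicit constant.
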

\begin{proof}
Consider normal coordinates $(x, y)$ based at $p$, and write $\Two_N'$ for the second fundamental form of $N \in \mathcal S$ taken with respect to the euclidean metric from those coordinates, $\normal_N'$ the euclidean normal, $\nabla'$ the euclidean Levi-Civita connection, and $\Gamma$ the Christoffel symbols.
In particular, since $\normal_N^\flat$ is the conormal and satisfies $\normal_N^\flat = (\normal_N')^\flat/|\normal_N'|$, 
$$\Two_N' = \nabla' (\normal_N')^\flat = (\nabla - \Gamma) |\normal_N'| \normal_N^\flat = |\normal_N'| (\Two_N - |\normal_N'| \Gamma \otimes \normal_N^\flat) + \nabla' \normal_N' \otimes \normal_N^\flat.$$
Using estimates on normal coordinates we conclude that for every $0 < s < i$ and some absolute $C > 0$,
$$\|\Two_N'\|_{B(p, s)} \leq A + CKs.$$
After rescaling we may assume that $A \leq 1/16$, $K \leq 1/(32C)$, and $i \geq 2$, so $\|\Two_N'\|_{C^0(B(p, 2))} \leq 1/8$.
Then we apply Lemma \ref{existence of tubes}: for $q \in N \cap B(p, 1)$, $B(q, 1) \subseteq B(p, 2)$, and $\tilde x$ the euclidean coordinate on $T_q N$ induced by the normal coordinates $(x, y)$, $N \cap B(q, 1)$ is the graph of a function $f$ on $T_q N$ satisfying
\begin{equation}\label{living in a tube}
|f(\tilde x)| \leq A|\tilde x|^2.
\end{equation}
Here, and for the remainder of this proof, we use $|\cdot|$ to mean the euclidean metric only.

Let $0 < r < s\delta^2$ for some small absolute $s > 0$ to be chosen later.
Choose some $N \in \mathcal S$ and $q \in B(p, r) \cap N$, and choose a normal coordinates system $(x, y)$ such that $\partial_y$ is a scalar multiple of $\normal_N(q)$ (where we use the euclidean metric to compare vectors in different tangent spaces).
Assume towards contradiction that (\ref{normal is basically dy}) fails.
Then there exists $N' \in \mathcal S$ and $q' \in B(p, r) \cap N'$ such that
$$|\normal_{N'}(q') - \partial_y| > \delta.$$
In particular, since $|\normal_{N'}(q')| = 1 + O(r^2)$ and $|\partial_y| = 1$, the angle $\theta$ between these two vectors is given by the law of cosines as 
$$1^2 + (1 + O(r^2))^2 - 1(1 + O(r^2)) \cos \theta = |\normal_{N'}(q') - \partial_y|^2$$
which can be neatly estimated for $s$ small enough as
$$\cos \theta < 1 - \frac{\delta^2}{2} + O(r^2) \leq 1 - \frac{\delta^2}{4}.$$
But $\theta$ is the angle between the tangent planes $T_q N$ and $T_{q'} N'$.
We consider the triangle $\Delta(q, q', r)$ where $r$ is a point of intersection of $P := T_q N$ and $P' := T_{q'} N'$, so again by the law of cosines, if $\alpha := |q - r|$ and $\beta := |q' - r|$,
$$\alpha^2 + \beta^2 - 2\alpha\beta \cos \theta = |q - q'|^2 \leq r^2.$$
By Young's inequality, it follows that 
$$r^2 \geq (\alpha^2 + \beta^2)(1 - \cos \theta) > (\alpha^2 + \beta^2) \frac{\delta^2}{4}$$
or in other words 
$$\alpha^2 + \beta^2 < \frac{4r^2}{\delta^2} < 4s^2 \delta^2$$
which means for $\delta$ small that $\max(\alpha, \beta) < 2c\delta < s/4$.
Hence $P, P'$ intersect in $B(p, s/4 + r) \subseteq B(p, s/2)$.

Now consider the tubes $\mathcal T, \mathcal T'$ of all points which are within $s^2/16$ of $P, P'$.
Since $P, P'$ intersect in $B(p, s/2)$, if $s$ is small, any graphs over $P, P'$ in $\mathcal T, \mathcal T'$ must intersect in $B(p, s)$.
In particular we can take $s < 1$ and conclude from (\ref{living in a tube}) that $N, N'$ are not disjoint, contradicting the definition of $\mathcal S$.
\end{proof}

\subsection{Proof of Theorem \ref{regularity theorem}}
Fix $\delta > 0$ to be chosen later but depending only on $i, K, d$, and $P \in M$.
Let $\normal$ be the normal vector to the hypersurfaces in $\mathcal S$.
By Lemma \ref{lams have C0 fields}, if $\delta \leq \delta_*$ for some $\delta_* = \delta_*(i, K) > 0$, there exists $r = r(\delta, i, K, A) > 0$ such that $B(P, r)$ admits rescaled normal coordinates $(x, y) \in 5\Ball^{d - 1} \times (-2, 2)$ in which the curvature of the rescaled metric has a $C^0$ norm $\leq K_0$ and
\begin{equation}\label{normal is almost constant}
\|\normal - \partial_y\|_{C^0(B(P, r))} \leq \delta.
\end{equation}
Moreover,
$$|\normal \cdot \partial_y| \geq 1 - |\normal - \partial_y| \geq 1 - \delta,$$
so if $\delta \leq \delta_*$ is small enough, then in $5\Ball^{d - 1} \times (-1, 1)$, then every leaf is the graph of a function, say $u_k: 5\Ball^{d - 1} \to (-2, 2)$ where $u_k(0) = k$, and
\begin{equation}\label{derivatives small}
\|\dif u_k\|_{C^0} \leq \frac{1 - (1 - \delta)^2}{1 - \delta} \leq 3 \delta.
\end{equation}
If $r$ is chosen small enough depending on $K, K_0$, then the metric $\tilde g$ induced by $g$ on $5\Ball^{d - 1} \times (-2, 2)$ satisfies $\|\Riem_{\tilde g}\|_{C^0} \leq K_0$.
Moreover, $\|u_k\|_{C^0} \leq 2$, and $u_k$ has a minimal graph, so the elliptic estimates stated in \S\ref{Leaf estimates} apply to $u_k$ where the implied constants only depend on $d$ and not on $M, g, P, k$.

Now let $-1 < k < \ell < 1$, and let $v_{\ell k} := u_\ell - u_k$.
By (\ref{Schauder Harnack}) with $v := v_{\ell k}$, for every $x \in \Ball^{d - 1}$,
\begin{equation}\label{bound on du}
|\dif u_\ell(x) - \dif u_k(x)| \lesssim |u_\ell(x) - u_k(x)|
\end{equation}
and it follows that
\begin{equation}\label{vertical Lipschitz}
|\normal(x, u_\ell(x)) - \normal(x, u_k(x))| \lesssim |u_\ell(x) - u_k(x)|.
\end{equation}

To extend (\ref{vertical Lipschitz}) to a Lipschitz bound on $\normal$, let $X_1, X_2 \in (\Ball^{d - 1} \times (-1, 1)) \cap \supp \lambda$.
Then there exist $x_1, x_2 \in \Ball^{d - 1}$ and $k_1, k_2 \in (-1, 1)$ such that $X_i = (x_i, u_{k_i}(x_i))$.
Setting $Y := (x_2, u_{k_1}(x_2))$,
$$|\normal(X_1) - \normal(X_2)| \leq |\normal(X_1) - \normal(Y)| + |\normal(Y) - \normal(X_2)|.$$
Then by (\ref{norms on uk}) and the mean value theorem,
$$|\normal(X_1) - \normal(Y)| \lesssim |\dif u_{k_1}(x_1) - \dif u_{k_1}(x_2)| \lesssim |X_1 - Y|.$$
Moreover, by (\ref{vertical Lipschitz}),
$$|\normal(Y) - \normal(X_2)| \lesssim |u_{k_1}(x) - u_{k_2}(x)| = |Y - X_2|.$$
Since $\delta \leq \frac{1}{4}$, by (\ref{normal is almost constant}),
$$|\cos \angle(X_1 - Y, X_2 - Y)| \lesssim \delta$$
so by the law of cosines,
$$|Y - X_2|^2 + |X_1 - Y|^2 \leq |X_1 - X_2|^2 + O(\delta(|Y - X_1|^2 + |X_2 - Y|^2)).$$
The implied constant only depends on $d$, and $\delta$ is allowed to depend on $d$, so we take $\delta$ so small that
$$|Y - X_2|^2 + |X_1 - Y|^2 \leq 2|X_1 - X_2|^2.$$
In conclusion,
$$|\normal(X_1) - \normal(X_2)| \lesssim |X_1 - X_2|$$
which implies that $\normal$ is Lipschitz on $V \cap \supp \lambda$, where $V$ is the neighborhood of $P$ which was mapped to $\Ball^{d - 1} \times (-1, 1)$ by the cylindrical coordinates $(x, y)$.
In particular, $V$ contains a ball of the form $B(P, s)$, where $s$ only depends on $r$ (and $r$ only depends on $g$ and $A$).
Taking a Lipschitz extension of $\normal$ to $V$ we obtain the desired Lipschitz normal subbundle.

Following \cite[Appendix B]{ColdingMinicozziIV}, we construct the laminar flow box
\begin{align*}
	F: \tilde V &\to V \\
	(\xi, \eta) &\mapsto (\xi, f(\xi, \eta))
\end{align*}
where $\tilde V$ is an open subset of $\RR^{d - 1}_\xi \times \RR_\eta$ (and $V$ is an open subset of $\RR^{d - 1}_x \times \RR_y$) by setting
$$f(\xi, \eta) := u_\eta(\xi)$$
if $u_\eta$ exists, and if $k < \eta < \ell$ and there does not $k < \eta' < \ell$ such that $u_{\eta'}$ exists, then
$$f(\xi, \eta) := u_k(\xi) + \frac{\eta - k}{\ell - k} v_{\ell k}(\xi)$$
is the linear interpolant of $u_k$ and $u_\ell$.

By (\ref{norms on uk}), $F$ is bounded in tangential $C^\infty$, where the seminorms only depend on $d, r$.
In particular, if $z$ is a unit vector field tangent to $\{\eta = k\}$, then the pushforward
$$F_* z = (z \cdot \dif \xi^i) \partial_{x^i} + (z \cdot \dif f) \partial_y = z + (\partial_\xi f) \partial_y$$
is well-defined, and pushforwards of such vector fields span the tangent bundle of the graph of $u_k$. 
The bound
\begin{equation}\label{xiLip of f}
	\|\partial_\xi f\|_{C^0} \leq \sup_k \|\dif u_k\|_{C^0} \leq 3\delta,
\end{equation}
a consequence of (\ref{derivatives small}), establishes that
$$\|F_* z\|_{C^0} \geq \|z\|_{C^0} - 3\delta \geq \frac{\|z\|_{C^0}}{2},$$
and then 
$$\|(F_* z) F^{-1}\|_{C^0} \lesssim \|z(F \circ F^{-1})\|_{C^0} \leq \|z\|_{C^0} \leq 2\|F_* z\|_{C^0}.$$
Since $z$ was arbitrary we conclude that $F^{-1}$ is bounded in tangential $C^1$, hence in tangential $C^\infty$ by the inverse function theorem, where the seminorms only depend on $d, r$.

It remains to show that $F$ is a Lipschitz isomorphism.
To do this, we first claim that $\Lip(f) \lesssim 1$ (where the implicit constant only depends on $d$).
In the $\xi$ direction, we use (\ref{xiLip of f}).
If $-1 < k < \ell < 1$, then by (\ref{bound on du}) and (\ref{Schauder Harnack}),
\begin{equation}\label{f lip}
	|f(\xi, k) - f(\xi, \ell)| \lesssim |u_k(\xi) - u_\ell(\xi)| \lesssim \ell - k.
\end{equation}
This shows that $f$ is Lipschitz in the $\eta$ direction on the leaves with constant only dependent on $d$, and hence on its entire domain by linear interpolation, proving the claim.
We can then estimate using (\ref{f lip})
$$|F(\xi_1, \eta_1) - F(\xi_2, \eta_2)| \lesssim |\xi_1 - \xi_2| + \Lip(f)(|\xi_1 - \xi_2| + |\eta_1 + \eta_2|)$$
so that $\Lip(F) \lesssim 1 + \Lip(f) \lesssim 1$.

To obtain a bound on $\Lip(F^{-1})$, we observe that
\begin{equation}\label{F is coLip in xi}
|\xi_1 - \xi_2|^2
\leq |\xi_1 - \xi_2|^2 + |f(\xi_1, \eta_1) - f(\xi_2, \eta_1)|^2 
= |F(\xi_1, \eta) - F(\xi_2, \eta)|^2.
\end{equation}
By Harnack's inequality with $\eta_1 = k$ and $\eta_2 = \ell$, or $k \leq \eta_1 < \eta_2 \leq \ell$ if $\eta_1, \eta_2$ lie in the plaque between leaves $k, \ell$,
$$\frac{|f(\xi_1, \eta_1) - f(\xi_1, \eta_2)|}{|\eta_1 - \eta_2|} \gtrsim \frac{v_{\ell k}(\xi_1)}{\ell - k} \gtrsim \frac{v_{\ell k}(0)}{\ell - k} = 1$$
whence by the mean value theorem, (\ref{F is coLip in xi}), and (\ref{derivatives small}),
\begin{align*}
	|\eta_1 - \eta_2| 
	&\lesssim |f(\xi_1, \eta_1) - f(\xi_1, \eta_2)| \\
	&\leq |f(\xi_1, \eta_1) - f(\xi_2, \eta_2)| + \|\partial_\xi f\|_{C^0} |\xi_1 - \xi_2| \\
	&\leq 2 |F(\xi_1, \eta_1) - F(\xi_2, \eta_2)|.
\end{align*}
Thus
$$|F(\xi_1, \eta_1) - F(\xi_2, \eta_2)| \gtrsim |\xi_1 - \xi_2|^2 + |\eta_1 - \eta_2|^2.$$
It follows that $\Lip(F^{-1}) \lesssim 1$, so $F$ is a Lipschitz isomorphism such that $\Lip(F)$ and $\Lip(F^{-1})$ only depend on $d$.

Finally, we compose $F$ with the change of coordinates at the start of this proof to obtain a laminar flow box in a small neighborhood of $(0, 0)$ whose image has radius $O(r)$, and whose Lipschitz constants are comparable to $O(r^{-1})$.

%%%%%%%%%%%%%%%%%%%%%%%%%%%%%%%%%%%%%%%%%

\section{Ruelle-Sullivan currents and functions of least gradient}\label{Prelims}
\subsection{Ruelle-Sullivan currents}\label{RS prelims}
Let $(\lambda, \mu)$ be a measured oriented lamination.
Then the Ruelle-Sullivan current $T_\mu$ is a well-defined closed $1$-current, and we may lift $T_\mu$ to the universal cover $\tilde M$, where it is exact \cite[\S8]{daskalopoulos2020transverse}.\footnote{The reference deals with geodesic laminations in a closed hyperbolic surface, but the arguments work for any tangentially $C^1$ measured oriented lamination.}
Moreover, $T_\mu$ has an intrinsic definition as the unique $1$-current with a certain polar decomposition.
To be more precise, recall that $\mu$ defines a measure on $\supp \lambda$: in each flow box $F_\alpha$, an open set $U$ has measure
\begin{equation}\label{transverse measure of an open set}
\mu(U) := \int_{K_\alpha} \mathcal H^{d - 1}(F_\alpha(\{k\} \times J) \cap U) \dif \mu_\alpha(k).
\end{equation}

\begin{lemma}
For a measured oriented lamination $(\lambda, \mu)$, with Lipschitz normal vector $\normal_\lambda$, the polar decomposition of $T_\mu$ is
\begin{equation}\label{polar ruelle sullivan}
T_\mu = \normal_\lambda \mu.
\end{equation}
\end{lemma}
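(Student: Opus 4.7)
The strategy is to reduce, via the partition of unity $(\chi_\alpha)$ appearing in the definition of $T_\mu$, to a computation inside a single flow box, and then to recognize both sides of the claimed identity as the same iterated integral over leaves. Concretely, fix $\alpha$ and let $\varphi$ be a $(d-1)$-form supported in the image of $F_\alpha$. By a change of variables, the inner integral in (\ref{RS current}) is nothing but $\int_{N_k} \varphi$, where $N_k = F_\alpha(\{k\} \times J)$ is the leaf with label $k$. Thus
\[
  \int_M T_\mu \wedge \varphi \;=\; \int_{K_\alpha} \int_{N_k} \varphi \, \dif \mu_\alpha(k).
\]

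The next step is the pointwise linear-algebraic identity: for any oriented hypersurface $N$ with unit conormal $\normal_N$ and any $(d-1)$-form $\varphi$ on $M$,
\[
  \varphi|_N \;=\; \frac{\normal_N \wedge \varphi}{\mathrm{vol}_M}\, \mathcal H^{d-1}|_N,
\]
where the quotient denotes the unique scalar function $f$ with $\normal_N \wedge \varphi = f \,\mathrm{vol}_M$. This is verified at each point by choosing an orthonormal frame $e_1, \dots, e_d$ with $e_d = \normal_N^\sharp$, expanding $\varphi$ in the dual basis, and observing that only the coefficient of the $e^1\wedge\cdots\wedge e^{d-1}$ term survives both the wedge with $e^d = \normal_N$ and the pullback to $N$. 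Applying this to each leaf $N_k$, whose conormal is $\normal_\lambda|_{N_k}$ (well-defined and Lipschitz by hypothesis), and then invoking the definition (\ref{transverse measure of an open set}) of $\mu$ as an iterated integral of $\mathcal H^{d-1}$ against $\mu_\alpha$, we obtain
\[
  \int_{K_\alpha} \int_{N_k} \varphi \, \dif \mu_\alpha(k) \;=\; \int_{\supp \lambda} \frac{\normal_\lambda \wedge \varphi}{\mathrm{vol}_M}\, \dif \mu,
\]
which is precisely $\int_M (\normal_\lambda \mu) \wedge \varphi$ after summing over $\alpha$ against the partition of unity.

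Finally, to conclude that (\ref{polar ruelle sullivan}) really is the \emph{polar} decomposition, it remains to observe that $|\normal_\lambda| = 1$ pointwise on $\supp \lambda$, so the expression $\normal_\lambda \mu$ exhibits $T_\mu$ as the product of a unit-length measurable $1$-form with a positive Radon measure, and such a decomposition is unique.

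The only nontrivial point is the orientation/nonorientability issue. When $\lambda$ is orientable, $\normal_\lambda$ is a genuine $1$-form and the argument above is clean. When $\lambda$ is not orientable, one works on a double cover (or, equivalently, views $\normal_\lambda$ as a section of the twisted line bundle alluded to at the end of \S\ref{Lams sections}); both the leaf integrals and the pointwise identity above descend unambiguously since the sign ambiguity in $\normal_\lambda$ matches the sign ambiguity in the orientation of the leaf, leaving $\normal_\lambda \wedge \varphi$ and $\int_{N_k} \varphi$ both well-defined as sections of the same line bundle. I expect this bookkeeping with the orientation bundle to be the one subtle step; everything else is a direct unpacking of definitions.
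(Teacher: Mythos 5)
Your proof is correct, and it takes a genuinely different route from the paper's. The paper computes the total variation $\int_U \star |T_\mu|$ directly as a supremum over test $(d-1)$-forms of unit $C^0$ norm: it exhibits cutoffs of $\star\normal_\lambda^\flat$ as a maximizing sequence, which identifies the polar part as $\normal_\lambda$ and the mass measure as $\mu$ (via (\ref{transverse measure of an open set})) simultaneously. You instead compute the pairing $\int_M T_\mu \wedge \varphi$ against an arbitrary test form, reduce to the pointwise linear-algebra identity $\varphi|_N = (\normal_N \wedge \varphi/\mathrm{vol}_M)\,\mathcal H^{d-1}|_N$ on each leaf, disintegrate using (\ref{transverse measure of an open set}), and only at the end invoke uniqueness of the polar decomposition of a Radon-measure-valued current. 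Both routes rest on the same underlying geometric fact (that $\star\normal_\lambda^\flat$ restricts to the Riemannian measure on each leaf); the paper's variational formulation is shorter, while your primal computation makes the identity $T_\mu = \normal_\lambda\mu$ transparent as a currents identity before appealing to uniqueness, which some readers may find clearer. One small caveat: under the usual wedge conventions your pointwise identity picks up an orientation-dependent sign $(-1)^{d-1}$ from moving $\normal_N$ past the $(d-1)$-form, so you should pin down the induced orientation on leaves consistently with the paper's convention (that $\star\normal_\lambda^\flat$ \emph{is} the leaf measure, not its negative); you already flag the orientation bookkeeping as the delicate step, and this sign is part of it.
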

\begin{proof}
For an open set $U \subseteq M$ in a flow box $F_\alpha$, the total variation satisfies
$$\int_U \star |T_\mu| = \sup_{\|\varphi\|_{C^0} \leq 1} \int_{K_\alpha} \int_{\{k\} \times J} \varphi \dif \mu_\alpha(k)$$
where the supremum ranges over $(d - 1)$-forms $\varphi$ with compact support in $U$.
However, $\star \normal_\lambda^\flat$ is the Riemannian measure on $F_\alpha(\{k\} \times J)$, so
$$\int_{\{k\} \times J} \varphi \leq \int_{\{k\} \times J} (F_\alpha^{-1})^*(\star \normal_\lambda^\flat).$$
Since $\|\normal^\lambda\|_{C^0} = 1$, it follows that a sequence of cutoffs of $\star \normal_\lambda^\flat$ to more and more of $U$ is a maximizing sequence.
Therefore $\normal_\lambda$ is the polar part of (\ref{polar ruelle sullivan}), and
$$\int_U \star |T_\mu| = \int_{K_\alpha} \int_{\{k\} \times J} (F_\alpha^{-1})^*(1_U \star \normal_\lambda^\flat) \dif \mu_\alpha(k).$$
The inner integral is the Riemannian measure of $F_\alpha(\{k\} \times J) \cap U$, so by (\ref{transverse measure of an open set}), $|T_\mu| = \mu$.
\end{proof}

The above computation motivates the definition of Ruelle-Sullivan current of a \emph{nonorientable} lamination.
To be more precise, if $\lambda$ is a nonorientable lamination with normal vector field $\normal_\lambda$, then we can view $\normal_\lambda$ as a section of a line bundle $L$ over $M$ of structure group $\ZZ/2$.
We can then define $T_\mu$ to be $\normal_\lambda \mu$, which makes sense as a distributional section of $L$, and can be tested against any continuous $(d - 1)$-form on $M$ whose support is contained in a trivializing chart of $L$.
In particular, we shall speak of the Ruelle-Sullivan current of any measured lamination, even if it is nonorientable.

%%%%%%%%%%%%%%%%%%%%%%%%%%%
\subsection{Functions of least gradient}\label{least gradient formulation}
Recall that in our convention, a function $u$ on $M$ has least gradient, if for every compactly supported perturbation $v$,
$$\int_{\supp v} \star |\dif u| \leq \int_{\supp v} \star |\dif u + \dif v|.$$
This is a purely interior notion, so it is well-behaved even on unbounded domains \cite[\S4]{górny2021}.
The interior formulation was used in the papers \cite{Miranda66, Miranda67,BOMBIERI1969} which introduced functions of least gradient.
The results of these papers were formulated for $M$ an open subset of $\RR^d$, but the proofs go through without any changes to the more general setting when $M$ is a Riemannian manifold.
Thus we have:

\begin{theorem}\label{main thm of old paper}
Let $u \in BV_\loc(M)$ have least gradient in $M$ and $y \in \RR$. Then $1_{\{u > y\}}$ has least gradient in $M$.
In particular, if $d \leq 7$, then $\partial \{u > y\}$ is the sum of disjoint smooth area-minimizing hypersurfaces.
\end{theorem}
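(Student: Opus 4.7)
The plan is to combine the coarea formula for BV functions with the classical regularity theory of area-minimizing Caccioppoli sets. The first assertion is a slicing/truncation argument due in essence to Miranda and Bombieri--De Giorgi--Giusti, while the second is a direct consequence of De Giorgi--Federer--Simons regularity.

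For the first assertion, fix $y \in \RR$ and a competitor $E$ with $E \triangle \{u > y\} \Subset \Omega$ for some bounded open $\Omega$. After an approximation reducing arbitrary $BV_\cpt$ perturbations to characteristic-function competitors, and using submodularity of the perimeter $P(E \cup F) + P(E \cap F) \leq P(E) + P(F)$, the proof splits into the two symmetric cases $E \supseteq \{u > y\}$ and $E \subseteq \{u > y\}$. In the first case, test the least-gradient property of $u$ against
$$v := \max(u, y) \cdot 1_E + u \cdot 1_{E^c},$$
a compactly supported perturbation of $u$. A direct computation gives $\{v > s\} = \{u > s\}$ for $s \geq y$ and $\{v > s\} = E \cup \{u > s\}$ for $s < y$ (modulo measure zero), so applying the coarea formula on both sides of $\int_\Omega |\dif u| \leq \int_\Omega |\dif v|$ collapses to
$$\int_{-\infty}^{y} \bigl[ P(E \cup \{u > s\}, \Omega) - P(\{u > s\}, \Omega) \bigr] \dif s \geq 0.$$
To localize at $s = y$, replace $E$ by $E_\tau := E \cap \{u > y - \tau\}$ (still a competitor containing $\{u > y\}$): the integrand now vanishes for $s < y - \tau$, so dividing by $\tau$ and applying Lebesgue differentiation together with lower semicontinuity of perimeter along the $L^1$-convergence $\{u > s\} \to \{u > y\}$ as $s \uparrow y$ yields the pointwise bound $P(\{u > y\}, \Omega) \leq P(E, \Omega)$. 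The case $E \subseteq \{u > y\}$ is symmetric with $\min$ replacing $\max$. For the at most countably many $y$ where $\{u > y\}$ and $\{u \geq y\}$ disagree on a set of positive measure, combine the a.e.\ result with monotone approximation $\{u > y\} = \bigcup_n \{u > y + 1/n\}$ and another appeal to perimeter lower semicontinuity.

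For the second assertion, in dimensions $d \leq 7$, the set $\{u > y\}$ now has locally perimeter-minimizing indicator, hence is a locally area-minimizing Caccioppoli set in $M$. The classical interior regularity theorem of De Giorgi--Federer--Simons asserts that its reduced boundary is a smooth embedded minimal hypersurface and that the singular set has Hausdorff dimension at most $d - 8$; in our range this set is empty. Consequently $\partial \{u > y\}$ (taken in the measure-theoretic sense, which by convention agrees with the topological boundary) is a smooth complete minimal hypersurface, and its connected components are pairwise disjoint and individually area-minimizing by locality of the minimization.

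The main technical obstacle is the slicing step passing from the integrated inequality to the pointwise comparison at $s = y$: the integrand $P(E \cup \{u > s\}) - P(\{u > s\})$ has no a priori sign, so one must engineer the truncation $E_\tau$ carefully so that Lebesgue differentiation can be combined with perimeter lower semicontinuity, and then handle the countable set of exceptional $y$ separately. Everything else---submodularity, the coarea formula, and invocation of the scalar GMT regularity black box---is routine.
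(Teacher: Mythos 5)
The second assertion of your proposal (passing from perimeter-minimality of $\{u>y\}$ to smoothness and area-minimality of $\partial\{u>y\}$ via De Giorgi--Federer--Simons regularity in dimension $d \leq 7$) is correct and matches the paper's appeal to its Theorem~\ref{regularity}. The paper does not re-derive the first assertion at all: it simply cites \cite[Theorem~1]{BOMBIERI1969} for the fact that $1_{\{u>y\}}$ has least gradient, so any self-contained argument you give is going beyond what the paper does.

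However, your argument for the first assertion has a genuine gap in the localization step. Starting from $v := \max(u,y)\,1_E + u\,1_{E^c}$ with $E \supseteq \{u>y\}$, the coarea computation correctly produces the integrated inequality
$$\int_{-\infty}^{y}\bigl[P(E\cup\{u>s\},\Omega) - P(\{u>s\},\Omega)\bigr]\,\dif s \;\geq\; 0,$$
and truncating to $E_\tau := E\cap\{u>y-\tau\}$ collapses the integral to the interval $(y-\tau,y)$. But dividing by $\tau$ and sending $\tau\to 0$ then requires two one-sided limits pointing in \emph{opposite} directions: for the subtracted term $\tfrac{1}{\tau}\int_{y-\tau}^y P(\{u>s\})\,\dif s \to P(\{u>y\})$ you can indeed invoke Lebesgue differentiation (at a.e.\ $y$), but for the added term you would need
$$\limsup_{\tau\to 0}\frac{1}{\tau}\int_{y-\tau}^{y} P(E_\tau\cup\{u>s\},\Omega)\,\dif s \;\leq\; P(E,\Omega),$$
which is an \emph{upper} bound on a limit of perimeters. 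Lower semicontinuity of perimeter under $L^1$-convergence (which is all you have, and all you invoke) gives $\liminf P(E_\tau\cup\{u>s\}) \geq P(\{u>y\})$, i.e.\ a bound in the wrong direction; perimeter is not upper semicontinuous, and nothing in the setup controls $P(E_\tau\cup\{u>s\})$ from above by $P(E)$. Concretely, even if one knows $G(t) := \int_{-\infty}^{t}\bigl[P(E\cup\{u>s\}) - P(\{u>s\})\bigr]\dif s \geq 0$ for all $t\leq y$ (which your truncation/re-levelling indeed yields), this does \emph{not} force $G'(y)\geq 0$, as a function can be nonnegative on $(-\infty,y]$ while having negative left derivative at the endpoint $y$. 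So the passage from the averaged inequality to the pointwise bound $P(\{u>y\},\Omega)\leq P(E,\Omega)$ is not justified as written; some further idea (a maximum-principle/cut-and-paste argument, or the actual approximation scheme of Bombieri--De Giorgi--Giusti) is needed, and you should either supply it or cite the result as the paper does.
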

\begin{proof}
Let $v := 1_{\{u > y\}}$.
Then $v$ has least gradient \cite[Theorem 1]{BOMBIERI1969}, so by Theorem \ref{regularity}, $\{u > y\}$ is bounded by disjoint embedded area-minimizing hypersurfaces.
These hypersurfaces inherit an orientation from the current $\dif v$.
\end{proof}

\begin{proposition}\label{MirandaStability}
Suppose that $(u_n)$ is a sequence of functions of least gradient on $M$, and $u_n \to u$ in $L^1_\loc(M)$.
Then $u$ has least gradient and $\dif u_n \to \dif u$ vaguely.
\end{proposition}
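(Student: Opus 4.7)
The plan is to reproduce Miranda's classical argument in the Riemannian setting: use a smooth cutoff to convert a compactly supported comparison function for $u$ into a compactly supported comparison for each $u_n$, and vice versa. The essential tool is the Leibniz identity
\[
\dif((1-\phi)u_n + \phi u) = (1-\phi)\dif u_n + \phi\,\dif u + (u - u_n)\,\dif\phi,
\]
valid for $\phi \in C^\infty_\cpt$ and $u_n, u \in BV_\loc$, which yields the measure bound
\[
|\dif((1-\phi)u_n + \phi u)| \leq (1-\phi)|\dif u_n| + \phi\,|\dif u| + |u - u_n|\,|\dif\phi|\,\star 1.
\]
Integrated over any bounded open set $V$, the last term is controlled by $\|\dif\phi\|_{C^0}\,\|u - u_n\|_{L^1(V)} = o(1)$ by the $L^1_\loc$ hypothesis.

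To prove that $u$ has least gradient, I would fix a comparison $v \in BV_\cpt(M)$ supported in a bounded open set $V$ with smooth boundary, and pick $\phi \in C^\infty_\cpt(V)$ with $\phi \equiv 1$ on a neighborhood of $\supp v$. The function $w_n := v + \phi(u - u_n)$ lies in $BV_\cpt(V)$, and $u_n + w_n = (1 - \phi)u_n + \phi(u + v)$. Since $\dif\phi$ vanishes on $\supp v$, the Leibniz estimate and least gradient of $u_n$ give
\[
\int_V \star|\dif u_n| \leq \int_V \star|\dif(u_n + w_n)| \leq \int_V (1-\phi)\star|\dif u_n| + \int_V \phi\,\star|\dif(u+v)| + o(1),
\]
which rearranges to $\int_V \phi\,\star|\dif u_n| \leq \int_V \phi\,\star|\dif(u+v)| + o(1)$. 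Taking $\liminf_n$ and applying lower semicontinuity of the BV seminorm against the nonnegative continuous $\phi$ yields $\int_V \phi\,\star|\dif u| \leq \int_V \phi\,\star|\dif(u+v)|$. Letting $\phi \uparrow 1_V$ through a sequence compactly supported in $V$ and using that $u + v = u$ off $\supp v$ recovers the least gradient inequality for $u$.

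Next, vague convergence $\dif u_n \to \dif u$ of the currents themselves is automatic from $L^1_\loc$ convergence via integration by parts against compactly supported test forms; the substantive content is vague convergence of the total variation measures. Lower semicontinuity supplies $\int \phi\,\star|\dif u| \leq \liminf_n \int \phi\,\star|\dif u_n|$ for every nonnegative $\phi \in C_\cpt(M)$. For the matching upper bound, apply the least gradient property of $u_n$ to the comparison $\phi(u - u_n) \in BV_\cpt(V)$ with $0 \leq \phi \leq 1$; the same Leibniz estimate gives $\int_V \phi\,\star|\dif u_n| \leq \int_V \phi\,\star|\dif u| + o(1)$, whence $\limsup_n \int \phi\,\star|\dif u_n| \leq \int \phi\,\star|\dif u|$, which combined with lower semicontinuity completes the proof.

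The only real obstacle is ensuring that the transition error $|u - u_n|\,|\dif\phi|\,\star 1$ produced by the Leibniz rule is negligible, but this is handled uniformly by the $L^\infty$ bound on $\dif\phi$ (fixed once $\phi$ is chosen) together with the $L^1_\loc$ convergence of $u_n$ to $u$; no additional regularity of $u_n$ or $u$ beyond the hypotheses is needed.
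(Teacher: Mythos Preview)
Your argument is essentially Miranda's original proof, which is exactly what the paper invokes by citation to \cite[Osservazione 3]{Miranda67}; the paper then deduces vague convergence by exhausting $M$ with open sets whose boundaries are $|\dif u|$-null, whereas you argue convergence of the total variations directly against smooth cutoffs. Both routes are standard and equivalent in content.

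There is one genuine circularity you should close. Your very first step forms $w_n := v + \phi(u - u_n)$ as an element of $BV_\cpt$, and your Leibniz estimate is stated for $u \in BV_\loc$; but nothing in the hypotheses gives you $u \in BV_\loc$ a priori, and without it neither $|\dif(u+v)|$ nor $\phi(u - u_n) \in BV_\cpt$ is available. The fix is to run your Step~2 comparison first, but between two minimizers $u_n$ and $u_m$ rather than between $u_n$ and $u$: with $0 \leq \phi \leq 1$ in $C^\infty_\cpt(V)$, comparing $u_n$ against $(1-\phi)u_n + \phi u_m$ gives
\[
\int_V \phi\,\star|\dif u_n| \leq \int_V \phi\,\star|\dif u_m| + \|\dif\phi\|_{C^0}\,\|u_n - u_m\|_{L^1(V)},
\]
so $\bigl(\int_V \phi\,\star|\dif u_n|\bigr)_n$ is Cauchy, hence bounded, and lower semicontinuity yields $u \in BV_\loc$. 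With that in hand, the rest of your argument goes through as written.
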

\begin{proof}
Let $\mu(E) := \int_E \star |\dif u|$ be the total variation measure of $u$.
Then $u$ has least gradient, and for every open set $U \subseteq M$ such that $\mu(\partial U) = 0$, then $\mu(U) = \lim_{n \to \infty} \int_U \star |\dif u_n|$ \cite[Osservazione 3]{Miranda67}.
In particular, $1_U \dif u_n \to 1_U \dif u$ vaguely.
We can exhaust $M$ by open sets $U$ such that $\mu(\partial U) = 0$ (if not, then there would be uncountably many disjoint closed subsets of $M$ with positive $\mu$-measure, contradicting that $\mu$ is locally finite), so this implies that $\dif u_n \to \dif u$ vaguely.
\end{proof}

We next discuss the Dirichlet problem for functions of least gradient.
Here we assume that $\overline M = M \cup \partial M$ is a compact manifold-with-boundary and $\partial M$ is Lipschitz.
Then we have a trace map $BV(M) \to L^1(\partial M)$, denoted $u \mapsto u|_{\partial M}$, by an immediate generalization of \cite[Theorem 2.10]{Giusti77}.
Let us temporarily say that a function $u \in BV(M)$ has \dfn{least gradient up to the boundary} if, for every $v \in BV(M)$ such that $v|_{\partial M} = 0$,
$$\int_M \star |\dif u| \leq \int_M \star |\dif u + \dif v|,$$
which is the formulation of ``function of least gradient'' that became standard after the seminal paper of Sternberg, Williams, and Ziemer \cite{Sternberg1992}.
By the equivalence of (1) and (2) in \cite[Proposition 9.3]{Korte19}, the function $u$ of bounded variation has least gradient (in the interior sense) iff $u$ has least gradient up to the boundary, an equivalence we will henceforth use without comment.

Let $h \in L^1(\partial M)$ be Dirichlet data. Then there does not have to exist a function of least gradient extending $h$, so it is natural to introduce the \dfn{relaxed functional} for the data $h$,
$$\Phi_h(v) := \int_M \star |\dif v| + \int_{\partial M} |v - h| \dif \mathcal H^{d - 1}.$$

\begin{theorem}[{\cite[\S2]{Mazon14}}]\label{relaxed formulation}
Let $\overline M = M \cup \partial M$ be compact with boundary, $u \in BV(M)$, and $h := u|_{\partial M}$.
The following are equivalent:
\begin{enumerate}
\item $u$ has least gradient.
\item $u$ minimizes $\Phi_h(v)$ among all $v \in BV(M)$.
\item There exists a measurable vector field $X$ on $M$, such that:
\begin{enumerate}
\item $\|X\|_{L^\infty} \leq 1$.
\item $\nabla \cdot X = 0$.
\item $X \cdot \dif u = |\dif u|$ as Radon measures.
\end{enumerate} \label{calibration condition}
\end{enumerate}
\end{theorem}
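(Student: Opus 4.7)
The plan is to prove the chain $(1) \Leftrightarrow (2) \Leftrightarrow (3)$, following the calculus-of-variations strategy that has become standard for the $1$-Laplace operator. The two halves of the argument are independent: the first is elementary and uses only a trace-extension trick, while the second goes through Anzellotti's pairing between bounded divergence-measure vector fields and BV functions, together with a convex duality argument.

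For $(1) \Leftrightarrow (2)$, the direction $(2) \Rightarrow (1)$ is immediate: if $v \in BV(M)$ with $v|_{\partial M} = 0$, then $(u + v)|_{\partial M} = h$, so $\Phi_h(u + v) = \int_M \star |\dif u + \dif v|$ and $\Phi_h(u) = \int_M \star |\dif u|$, and the inequality $\Phi_h(u) \leq \Phi_h(u + v)$ is exactly the least-gradient property. For $(1) \Rightarrow (2)$ I would use a standard enlargement trick: attach a thin Lipschitz collar to $\overline M$ to obtain $M' \supset M$, and fix once and for all an auxiliary extension $h' \in BV(M' \setminus M)$ of $h$. For any $v \in BV(M)$, gluing $v$ and $h'$ across $\partial M$ produces $\tilde v \in BV(M')$ whose total variation on $M'$ equals $\int_M \star|\dif v| + \int_{\partial M} |v - h| \, \dif \mathcal H^{d-1}$ plus a constant depending only on $h'$, so that $\Phi_h(v)$ differs from $\int_{M'} \star|\dif \tilde v|$ by a $v$-independent constant. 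Applying $(1)$ to the function $\tilde u$ constructed the same way yields $\Phi_h(u) \leq \Phi_h(v)$.

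For $(3) \Rightarrow (2)$, I would invoke Anzellotti's theory: any $X \in L^\infty(M; TM)$ with $\nabla \cdot X = 0$ defines a pairing $(X, \dif v)$ on $BV(M)$ which is a Radon measure satisfying $|(X, \dif v)| \leq \|X\|_{L^\infty} |\dif v|$ and the generalized Gauss--Green formula
$$(X, \dif v)(M) = \int_{\partial M} [X \cdot \nu]\, v|_{\partial M}\, \dif \mathcal H^{d-1},$$
where $\|[X \cdot \nu]\|_{L^\infty(\partial M)} \leq 1$. Chaining inequalities gives
$$\Phi_h(v) \geq (X, \dif v)(M) + \int_{\partial M} |v - h| \geq \int_{\partial M} [X \cdot \nu]\, v + \int_{\partial M} [X \cdot \nu](h - v) = \int_{\partial M} [X \cdot \nu]\, h,$$
and the calibration conditions $(X, \dif u) = |\dif u|$ and $[X \cdot \nu]\, u = [X \cdot \nu]\, h$ (which is automatic since $u|_{\partial M} = h$) force equality when $v = u$.

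The main obstacle, and the only nontrivial step, is the converse $(2) \Rightarrow (3)$: producing the calibration $X$. I would set this up as a Fenchel--Rockafellar duality between the primal problem $\inf_v \Phi_h(v)$ on $BV(M)$ and the dual problem
$$\sup_X \int_{\partial M} [X \cdot \nu]\, h\, \dif \mathcal H^{d-1} \quad \text{subject to } \|X\|_{L^\infty} \leq 1, \ \nabla \cdot X = 0.$$
The relaxation of the boundary trace inside $\Phi_h$ is precisely what kills the Lavrentiev gap and ensures that the constraint qualifications for Fenchel--Rockafellar are satisfied (one checks this using a BV extension of $h$ into $M$, which exists because $\partial M$ is Lipschitz). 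Duality then yields both a dual maximizer $X$ and the absence of a duality gap; the primal--dual extremality conditions read $(X, \dif u) = |\dif u|$ as Radon measures on $M$ together with $[X \cdot \nu]\, (u - h) = |u - h|$ on $\partial M$, which are exactly (3)(a)--(c). A technical subtlety I expect to have to address is that the natural function space for $X$ is only $L^\infty$, so the dual maximum must be attained in the weak-$*$ topology, and one must verify lower semicontinuity of the Anzellotti pairing under such limits.
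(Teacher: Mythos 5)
First, a meta-observation: the paper does not actually prove Theorem~\ref{relaxed formulation}; it is quoted wholesale from Maz\'on \cite[\S2]{Mazon14}, so there is no argument in the manuscript to compare against. Your overall strategy --- Anzellotti's pairing for $(3)\Rightarrow(2)$, Fenchel--Rockafellar duality for $(2)\Rightarrow(3)$, and exchange of variations for the equivalence with least gradient --- is consistent in outline with the route taken in that reference, and your implications $(2)\Rightarrow(1)$ and $(3)\Rightarrow(2)$ are correct as written.

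There is, however, a genuine gap in $(1)\Rightarrow(2)$. The collar construction tells you only that $\Phi_h(w) + C = \int_{M'} \star|\dif \tilde w|$ for every $w\in BV(M)$, with $C$ a $w$-independent constant; you then assert that ``applying $(1)$ to $\tilde u$'' yields $\int_{M'}\star|\dif\tilde u|\leq\int_{M'}\star|\dif\tilde v|$. But $(1)$ is a property of $u$ on $M$, not of $\tilde u$ on $M'$, and the glued function $\tilde u$ has no reason to have least gradient on $M'$: on the collar it agrees with the arbitrary extension $h'$, and even if one restricts to perturbations supported in $\overline M$, the interior trace of $\tilde v|_M$ on $\partial M$ need not equal $h$, so $\tilde v-\tilde u$ is not a zero-trace perturbation of $u$ on $M$ and the least-gradient hypothesis (which, via \cite[Proposition 9.3]{Korte19}, lets you compete only against perturbations vanishing on $\partial M$) cannot be invoked. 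If you unwind your own bookkeeping you will see that $\int_{M'}\star|\dif\tilde u|\leq\int_{M'}\star|\dif\tilde v|$ is a \emph{restatement} of $\Phi_h(u)\leq\Phi_h(v)$, so the collar trick is only a reformulation, not a proof. The missing ingredient is the standard $BV$ approximation lemma on Lipschitz domains (this is precisely the assertion that $\Phi_h$ is the lower semicontinuous relaxation of the Dirichlet-constrained total variation): for every $v\in BV(M)$ there exist $v_n\in BV(M)$ with $v_n|_{\partial M}=h$, $v_n\to v$ in $L^1(M)$, and $\int_M\star|\dif v_n|\to\Phi_h(v)$. Given this, $(1)$ applied to the zero-trace perturbations $v_n-u$ gives $\Phi_h(u)=\int_M\star|\dif u|\leq\int_M\star|\dif v_n|=\Phi_h(v_n)\to\Phi_h(v)$, which closes the implication. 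Alternatively, you could bypass $(1)\Rightarrow(2)$ by proving $(3)\Rightarrow(1)$ directly with the same Gauss--Green chaining you use for $(3)\Rightarrow(2)$ and closing the cycle as $(1)\Rightarrow(3)\Rightarrow(2)\Rightarrow(1)$, but this requires $(1)\Rightarrow(3)$, which again comes from the duality step, so it does not save work.
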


Let us scrutinize the vector field $X$ in Theorem \ref{relaxed formulation}.
First, since $\nabla \cdot X = 0$, $X$ is $|\dif u|$-measurable and $X \cdot \dif u$ is a well-defined Radon measure, even though $X$ could be discontinuous \cite[Theorem 1.5]{Anzellotti1983}.
Similarly, if $N$ is any Lipschitz hypersurface in $M$, then the normal part of the trace, $\normal_{\partial N} \cdot X$, is a well-defined measurable function on $N$, with $\|\normal_{\partial N} \cdot X\|_{L^\infty(N)} \leq 1$ \cite[Theorem 1.2]{Anzellotti1983}.
In particular, taking $N$ to be a component of $\partial \{u > y\}$ and using Theorem \ref{main thm of old paper}, we see that $X$ must be the normal vector to $N$ (otherwise $X \cdot \normal_N^\flat < 1$, but $\normal^\flat_N = \dif u/|\dif u|$, so $X \cdot \dif u < |\dif u|$).
The vector field $X$ (or rather its Hodge dual) has a central role in \cite{BackusInfinityMaxwell1}, where these properties are studied more closely.

A difficulty with the notion of functions of least gradient is that, while one would like to think of them as the natural solution concept for the $1$-Laplacian (\ref{1Laplacian}), solutions of a PDE form a sheaf (in the sense that if $\mathcal U$ is an open cover and $u$ is a function such that $u|_U$ is a solution for every $U \in \mathcal U$, then $u$ is a solution); however, functions of least gradient do \emph{not} form a sheaf.
This is essentially equivalent to the fact that a hypersurface which is locally area-minimizing need not be area-minimizing:

\begin{example}
Let $M = \Ball^3$ be the unit ball in $\RR^3$, and $u$ be the indicator function of the region bounded by a catenoid $C$ which meets $\partial \Ball^3$ on two circles $\partial D_1, \partial D_2$, which bound disks $D_i$ of the same area $A$.
Then $\star |\dif u|$ is the surface measure $\mathcal H^{d - 1}|_C$ on $C$ by the coarea formula (\ref{coarea formula}).
Since $C$ is a minimal surface, $u$ has locally least gradient.
But we may choose $C$ so that $2A < \mathcal H^{d - 1}(C)$.
Then $u$ competes with the indicator function $v$ of the region bounded by $D_2 - D_1$, and $\int_M \star |\dif v| = 2A$, so $u$ does not have least gradient.
\end{example}

In practice, it is much easier to check whether a hypersurface is minimal than if it is area-minimizing, so the converse direction of Theorem \ref{main thm} would have limited utility if one had to assume that the leaves of $\lambda$ are area-minimizing.
These considerations are why we consider functions of locally least gradient in Theorem \ref{main thm}.

In \cite{daskalopoulos2020transverse, BackusInfinityMaxwell1}, it is necessary to study functions of least gradient in a homological sense, and we conclude this section by explaining how that notion is related to this paper.
These remarks can be ignored by the reader only interested in the present paper.

Suppose that $N$ is a closed Riemannian manifold, and we have a Riemannian covering $M \to N$.
A function $u$ on $M$ is \dfn{equivariant} for the covering if $\dif u$ descends to a $1$-form on $N$ (which we also call $\dif u$), and $u$ is \dfn{invariant} for the covering if $u$ itself descends to a function on $N$.
We say that an equivariant function $u \in BV_\loc(M)$ has \dfn{homologically least gradient} if, for every invariant $v \in BV_\loc(M)$,
$$\int_M \star |\dif u| \leq \int_M \star |\dif u + \dif v|.$$
This is the notion of least gradient used in \cite{daskalopoulos2020transverse,BackusInfinityMaxwell1}.
If $u$ is equivariant and has least gradient, then by testing against $\chi v$ where $\chi$ has compact support in a fundamental domain of the covering and $v$ is invariant, we see that $u$ has homologically least gradient.
Conversely, if $u$ has homologically least gradient, then one can use a variation of Theorem \ref{relaxed formulation} to show that there is a vector field $X$ satisfying Theorem \ref{relaxed formulation}(\ref{calibration condition}) which is equivariant, and this is enough to show that $u$ has least gradient; this is discussed more carefully in \cite{BackusInfinityMaxwell1}.

%%%%%%%%%%%%%%%%%%%
\subsection{Hausdorff limits of closed sets}
\begin{definition}[{\cite[Chapter IV]{nadler2017continuum}}]
Let $X$ be a topological space, and $(Y_n)$ a sequence of closed subsets of $X$.
\begin{enumerate}
\item The \dfn{limit inferior} $\liminf_{n \to \infty} Y_n$ is the set of all $x \in X$ such that for every open neighborhood $U \ni x$, $U \cap Y_n$ is eventually nonempty.
\item The \dfn{limit superior} $\limsup_{n \to \infty} Y_n$ is the set of all $x \in X$ such that for every open neighborhood $U \ni x$, $U \cap Y_n$ is nonempty for infinitely many $n$.
\item If $\liminf_{n \to \infty} Y_n = \limsup_{n \to \infty} Y_n$, we call that set the \dfn{Hausdorff limit} $\lim_{n \to \infty} Y_n$.
\end{enumerate}
\end{definition}

\begin{lemma}
Suppose that $\lambda_n \to \lambda$ in Thurston's geometric topology. Then
\begin{equation}\label{supports shrink in the limit}
\supp \lambda \subseteq \liminf_{n \to \infty} \supp \lambda_n.
\end{equation}
\end{lemma}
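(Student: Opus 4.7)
The plan is to unwind the definitions directly; the inclusion should hold with no real work, since it is the ``soft'' half of the relationship between Thurston convergence and Hausdorff convergence. First I would fix an arbitrary point $x \in \supp \lambda$ and aim to show $x \in \liminf_n \supp \lambda_n$. By definition of $\liminf$, this reduces to checking that for every open neighborhood $U \ni x$, eventually $U \cap \supp \lambda_n$ is nonempty, and I would immediately reduce to balls by choosing $\varepsilon > 0$ so small that $B(x, \varepsilon) \subseteq U$.

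Next I would use the local product structure of a lamination to exhibit an actual leaf of $\lambda$ through $x$: since $x$ lies in some flow box image $U_\alpha$, the identity $\supp \lambda \cap U_\alpha = F_\alpha(K_\alpha \times J)$ lets me write $x = F_\alpha(k, j)$ with $k \in K_\alpha$ and $j \in J$, so $x$ sits on the leaf $N := F_\alpha(\{k\} \times J)$. This is the only place the definition of lamination (as opposed to merely a closed set) enters, and it is why the characterization of Thurston convergence quoted in the excerpt can be invoked at $x$.

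Finally I would apply that characterization directly: given the leaf $N$, the point $x \in N$, and the chosen $\varepsilon > 0$, there exists $i_{\varepsilon, x}$ such that $\supp \lambda_i \cap B(x, \varepsilon) \neq \emptyset$ for all $i \geq i_{\varepsilon, x}$. Since $B(x, \varepsilon) \subseteq U$, this says $U \cap \supp \lambda_i \neq \emptyset$ eventually, which is exactly the defining condition for $x \in \liminf_n \supp \lambda_n$. Since $x$ was arbitrary in $\supp \lambda$, the inclusion follows. I expect no obstacle at any step; the genuine content of comparing Thurston convergence to Hausdorff convergence lies in the reverse inclusion (which can fail without a maximality hypothesis on $\lambda$, as already signaled by the non-Hausdorff nature of Thurston's topology and the discussion of maximal limits), and that is presumably addressed separately.
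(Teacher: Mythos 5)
Your proof is correct and takes essentially the same approach as the paper's: both simply unwind the definition of Thurston convergence (eventual nonemptiness of $\supp\lambda_i \cap B(x,\varepsilon)$ for every $x$ on a leaf) and match it against the definition of $\liminf$. Your version is more careful than the paper's terse proof in explicitly noting that the local product structure is what guarantees every point of $\supp\lambda$ lies on a leaf, which is the hypothesis needed to apply the convergence criterion; this is a small but genuine clarification.
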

\begin{proof}
We pass to a subsequence which realizes the limit inferior in (\ref{supports shrink in the limit}).
Then by definition of a basic open set, for every $\varepsilon > 0$ we can find $n$ and $x_n$ such that $x_n \in \supp \lambda_n \cap B(x, \varepsilon)$.
Either way, we conclude (\ref{supports shrink in the limit}).
\end{proof}

\section{Application to \texorpdfstring{$1$-harmonic}{one-harmonic} functions}\label{1harmonic sec}
The purpose of this section is to prove Theorem \ref{main thm}, and explore some of its consequences.
Throughout, we shall assume that the dimension of $M$ is $2 \leq d \leq 7$.

%%%%%%%%%%%%%
\subsection{Estimates on the level sets}\label{level set estimates}
\begin{lemma}
There exists a continuous function $R: M \to \RR_+$ such that the following holds.
Let $p \in M$, $0 < r \leq R(p)$, and $N$ an area-minimizing hypersurface in $B(x, r)$.
Then 
\begin{equation}\label{least gradient area bound}
\mathcal H^{d - 1}(N) \leq 2d \omega_d r^{d - 1}.
\end{equation}
\end{lemma}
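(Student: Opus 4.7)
The plan is a competitor argument against the boundary sphere. I would define $R: M \to \RR_+$ to be any continuous function satisfying $R(p) < \inj(p)$ and such that the Riemannian area of geodesic spheres is within a factor of $2$ of the Euclidean area: $\mathcal{H}^{d-1}(\partial B(p, s)) \leq 2d\omega_d s^{d-1}$ for all $0 < s \leq R(p)$. This is possible because in normal coordinates based at $p$ one has $\mathcal{H}^{d-1}(\partial B(p, s)) = d\omega_d s^{d-1}(1 + O(s^2))$ with constants depending on a local bound on $\|\Riem_M\|_{C^0}$ near $p$, and both the local curvature and injectivity bounds can be made continuous in $p$ since $M$ is smooth.

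Next, fix $p$ and $0 < r \leq R(p)$. Writing $N = \partial^* E \cap B(p, r)$ for a set $E$ minimizing perimeter in $B(p, r)$ (the regularity theory in $d \leq 7$, via Theorem \ref{main thm of old paper}, lets us identify $\mathcal H^{d-1}(N) = P(E, B(p, r))$), I would compare $E$ with the competitor $E_\varepsilon := E \setminus B(p, r - \varepsilon)$ for small $\varepsilon > 0$. Choosing $\varepsilon$ so that the slice $\partial B(p, r-\varepsilon) \cap N$ has vanishing $\mathcal H^{d-1}$-measure (possible for a.e.\ $\varepsilon$ by the coarea formula), the locality of perimeter gives
\begin{equation*}
P(E_\varepsilon, B(p, r)) = P(E, B(p, r) \setminus \overline{B(p, r - \varepsilon)}) + \mathcal H^{d-1}(E \cap \partial B(p, r - \varepsilon)).
\end{equation*}
By minimality of $E$, this forces
\begin{equation*}
P(E, B(p, r - \varepsilon)) \leq \mathcal H^{d-1}(E \cap \partial B(p, r - \varepsilon)) \leq \mathcal H^{d-1}(\partial B(p, r - \varepsilon)) \leq 2d\omega_d (r - \varepsilon)^{d-1},
\end{equation*}
by our choice of $R$. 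Letting $\varepsilon \to 0$ then yields (\ref{least gradient area bound}).

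The main obstacle is purely bookkeeping: verifying that the slicing $E \mapsto E \cap \partial B(p, r-\varepsilon)$ behaves as expected in the BV framework on a Riemannian manifold, so that the competitor $E_\varepsilon$ really has the claimed perimeter. This is standard once $\varepsilon$ is chosen to be a good value of the distance function $\dist(p, \cdot)$, but should be cited carefully from the appendix on boundaries of sets of finite perimeter. Everything else (continuity and positivity of $R$, and the area bound on geodesic spheres) is just a consequence of the smoothness of $g$ and the expansion of the volume form in normal coordinates.
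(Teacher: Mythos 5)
Your proof is correct and uses the same essential idea as the paper: compare the minimizer against the ``empty'' competitor and use the boundary sphere as the barrier. The difference is purely in implementation. The paper sets $v := 1_V$ with $N = \partial V$, notes $v$ has least gradient, and invokes the Sternberg--Williams--Ziemer relaxed functional $\Phi_h$ (Theorem~\ref{relaxed formulation}): since $\Phi_h(v) = \mathcal H^{d-1}(N)$ and $\Phi_h(0) = \int_{\partial B(p,r)} |h|\,\dif\mathcal H^{d-1} \leq \mathcal H^{d-1}(\partial B(p,r))$, minimality of $\Phi_h(v)$ gives the bound in one line, with the trace theorem handling the boundary bookkeeping that your slicing argument does by hand. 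Your version trades the abstract minimality of the relaxed functional for an explicit sliced competitor $E_\varepsilon := E \setminus B(p, r - \varepsilon)$ plus a limiting argument $\varepsilon \to 0$; this is more elementary and avoids invoking the Dirichlet-problem formulation entirely, at the cost of having to justify the good-slice selection and the locality decomposition of $P(E_\varepsilon, \cdot)$. Both routes require the same normal-coordinates estimate $\mathcal H^{d-1}(\partial B(p,s)) \leq 2d\omega_d s^{d-1}$ to close, and your construction of $R$ via continuous local bounds on curvature and injectivity radius matches the (unstated) construction in the paper. One small caution: the slice you want is $E^{(1)} \cap \partial B(p, r-\varepsilon)$ (the density-one points), which coincides $\mathcal H^{d-1}$-a.e. with $E \cap \partial B(p, r-\varepsilon)$ for a.e.\ $\varepsilon$ by Fubini/coarea, so this is a notational rather than a substantive gap.
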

\begin{proof}
If $r$ is small enough depending on the geometry of $M$ near $p$, $B(p, r)$ is contractible, so $N = \partial V$ for some open set $V$.
Let $v := 1_V$.
Since $N$ is area-minimizing, $v$ has least gradient.
Let $h$ be the trace of $v$ along $\partial B(p, r)$; then $\|h\|_{L^\infty} \leq 1$ \cite[Theorem 2.10]{Giusti77}.
By (\ref{hausdorff measure is jump mass}) and Theorem \ref{relaxed formulation},
$$\mathcal H^{d - 1}(N) = \int_{B(p, r)} \star |\dif v| = \Phi_h(v) \leq \Phi_h(0) = \int_{\partial B(p, r)} |h| \dif \mathcal H^{d - 1}.$$
If $r$ is small enough depending on the curvature of $M$ near $p$, we can approximate $\partial B(p, r)$ by a round $(d - 1)$-sphere of radius $r$, and conclude
\begin{align*}
	\int_{\partial B(p, r)} |h| \dif \mathcal H^{d - 1} &\leq \mathcal H^{d - 1}(\partial B(p, r)) \leq 2d \omega_d r^{d - 1}. \qedhere
\end{align*}
\end{proof}

\begin{lemma}\label{choose balls for main thm}
There exist $C > 0$ and a continuous function $R: M \to \RR_+$ such that the following holds.
Let $p \in M$, $y \in \RR$, $0 < r \leq R(p)$, and $N$ an area-minimizing hypersurface in $B(p, 2r)$.
Then
\begin{equation}\label{least gradient curvature bound}
\|\Two_N\|_{C^0(B(p, r))} \leq \frac{C}{r}.
\end{equation}
\end{lemma}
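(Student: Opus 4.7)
The plan is to invoke the Schoen--Simon--Yau interior curvature estimate for stable minimal hypersurfaces, which in the relevant dimension range $d - 1 \leq 6$ gives $\|\Two_N\|_{C^0} \leq C/r$ on the concentric half-ball once one has a uniform area bound on $N \cap B(p,2r)$. The hypothesis $d \leq 7$ is precisely what makes this estimate available and, as the acknowledgements note, this is why the dimension restriction appears.

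Here is how I would organize the proof. First, shrink $R(p)$ (compared to the radius function in the previous lemma) so that on $B(p, 2r)$ the metric is uniformly close to euclidean in $C^2$, the ball is geodesically convex and simply connected, and the sectional curvatures and injectivity radius are controlled; all of these can be arranged continuously in $p$ because we only need local bounds. Second, because $N$ is area-minimizing in $B(p,2r)$, it is in particular a stable minimal hypersurface there. Third, the previous lemma (applied with $2r$ in place of $r$, after possibly further shrinking $R$) yields the area bound
\begin{equation*}
\mathcal H^{d-1}(N \cap B(p, 2r)) \leq 2d\, \omega_d\, (2r)^{d-1}.
\end{equation*}

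Fourth, I would apply the Schoen--Simon--Yau curvature estimate (in the Riemannian form; see \cite{Schoen75, Schoen81}), which under stability, the area bound above, and the bounded ambient geometry on $B(p,2r)$, gives a pointwise estimate of the form
\begin{equation*}
|\Two_N(x)|^2 \leq \frac{C}{\dist(x, \partial B(p, 2r))^2}
\end{equation*}
for all $x \in N \cap B(p, 2r)$, with $C$ depending only on $d$ and the (controlled) ambient geometry constants absorbed into the choice of $R$. Restricting to $x \in B(p, r)$ forces $\dist(x, \partial B(p,2r)) \geq r$, yielding \eqref{least gradient curvature bound}.

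The main technical point to watch is that the Schoen--Simon--Yau estimate is classically stated in $\RR^d$, so one must verify the Riemannian version with an ambient curvature term; this is routine but requires that $R(p)$ absorb $\|\Riem_M\|_{C^0}$ and the injectivity radius near $p$, which is why $R$ is a continuous function of $p$ rather than a universal constant. Everything else — the stability of area-minimizing hypersurfaces and the reduction of the curvature bound to an area bound — is standard.
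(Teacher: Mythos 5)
Your proposal is correct and follows essentially the same route as the paper's proof: use the area bound from the preceding lemma, note that area-minimizing implies stable, invoke the Schoen--Simon curvature estimate (Riemannian form, $d \leq 7$), and absorb the ambient curvature contribution by shrinking $R(p)$. The only cosmetic difference is that the paper re-centers the area bound at $q \in N \cap B(p,r)$ and applies the estimate on $B(q,r)$, whereas you apply it on $B(p,2r)$ and use the distance-to-boundary form; these are the same estimate.
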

\begin{proof}
Let $q \in N \cap B(p, r)$ (so $B(q, r) \subseteq B(p, 2r)$).
By (\ref{least gradient area bound}),
$$\mathcal H^{d - 1}(N \cap B(q, r)) \leq 2d \omega_d r^{d - 1}.$$
Since $N$ is area-minimizing, $N$ is stable.
So by \cite[pg785, Corollary 1]{Schoen81}\footnote{See also \cite[Theorem 3]{Schoen75} for an easier proof when $M$ has nonpositive curvature and dimension $d \leq 6$, or \cite[Chapter 2, \S\S4-5]{colding2011course} for a textbook treatment of a similar estimate.}
\begin{align*}
|\Two_N(q)| &\leq \|\Two_N\|_{C^0(B(q, r/2))} \lesssim \frac{1}{r}
\end{align*}
where the contribution from the curvature of $M$ can be absorbed if $R(p)$ is chosen small enough.
\end{proof}

%%%%%%%%%%%%%%
\subsection{Proof of Theorem \texorpdfstring{\ref{main thm}}{B}: Locally least gradient implies minimal lamination}
\subsubsection{Structure of level sets}
Let $u$ be a function of locally least gradient.
The theorem is local, so we may replace $M$ by a ball $B(p, r)$, such that $u|_{B(p, r)}$ has least gradient, and such that $r \leq R(p)$, where $R$ is the function in Lemma \ref{choose balls for main thm}.

By a \dfn{level set} we mean a connected component of $\partial \{u > y\}$ for some $y \in \RR$.
By Theorem \ref{main thm of old paper}, the level sets of $u$ are area-minimizing hypersurfaces in $M$.

Let $y, z \in \RR$. If $y > z$, then $\{u > y\} \subseteq \{u > z\}$, so $\partial \{u > y\}$ lies on one side of $\partial \{u > z\}$.
By the maximum principle, Proposition \ref{maximum principle}, it follows that either $\partial \{u > y\}$ and $\partial \{u > z\}$ are disjoint, or are equal.
If we shrank $M$ enough, then by (\ref{least gradient curvature bound}), there is $A \geq 0$ such that for any level set $N$,
\begin{equation}\label{curvature bound on level sets}
\|\Two_N\|_{C^0} \leq A.
\end{equation}
By (\ref{level sets define support}), $S := \bigcup_{y \in \RR} \partial \{u > y\}$ is dense in $\overline S = \supp \dif u$.

\subsubsection{Existence of flow boxes}
Since $u$ has least gradient, $u \in L^\infty_\loc$ \cite[Theorem 4.3]{Gorny20}.\footnote{The cited proof uses the monotonicity formula for minimal hypersurfaces in $\RR^d$. This can be replaced with (\ref{monotonicity formula}) to generalize the result to arbitrary Riemannian manifolds.}
Since we are working locally, we may shrink $M$ so that $u \in L^\infty$.

A minimal hypersurface $N$ is a \dfn{generalized level set} if $N$ is locally the $C^2$ limit of level sets.

\begin{lemma}
The closed set $\overline S$ is covered by generalized level sets $N$ satisfying (\ref{curvature bound on level sets}).
\end{lemma}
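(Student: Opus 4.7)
The plan is to use the density of $S$ in $\overline S$ together with a standard compactness argument for minimal graphs of bounded curvature. Fix $p \in \overline S$. Since $S$ is dense in $\overline S$, there is a sequence $p_n \to p$ with $p_n \in N_n$ for some level set $N_n \subseteq \partial\{u > y_n\}$. By (\ref{curvature bound on level sets}), each $N_n$ satisfies $\|\Two_{N_n}\|_{C^0} \leq A$ uniformly in $n$.

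Next I would localize each $N_n$ as a graph over its tangent hyperplane. After passing to a subsequence, we may assume the tangent planes $T_{p_n} N_n$ converge (by compactness of the Grassmannian, say after parallel transport) to a hyperplane $P \subseteq T_p M$. Working in normal coordinates based at $p$, Lemma \ref{existence of tubes} (applied with a metric rescaling as in the proof of Lemma \ref{lams have C0 fields}) then shows that for all sufficiently large $n$ the component of $N_n$ through $p_n$ is the graph of a function $f_n\colon B(0,r) \cap P \to P^\perp$, where $r = r(A,K,i) > 0$ is independent of $n$, with a uniform $C^{1,1}$ bound.

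Since each $f_n$ satisfies the minimal surface equation in these coordinates, the Schauder estimates recorded in (\ref{norms on uk}) upgrade the uniform $C^{1,1}$ bound to a uniform bound in $C^r$ for every $r$. Arzel\`a--Ascoli then extracts a subsequence with $f_n \to f$ in $C^\infty(B(0,r/2) \cap P)$. The graph $N$ of $f$ is a smooth minimal hypersurface through $p$, the $C^\infty$ convergence preserves the curvature bound so $\|\Two_N\|_{C^0} \leq A$, and by construction $N$ is locally the $C^2$ limit of level sets, i.e.\ a generalized level set. Running this argument at every $p \in \overline S$ gives the claimed cover.

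The only real subtlety is ensuring that the tangent planes of the $N_n$ stabilize in a neighborhood of $p$ so that one uniform coordinate patch captures all of them as graphs; this is precisely what Lemma \ref{lams have C0 fields} and the graphical representation of Lemma \ref{existence of tubes} are designed to handle, so nothing further is needed.
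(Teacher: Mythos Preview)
Your argument is correct and mirrors the paper's proof: approximate $p\in\overline S$ by points on level sets, write the level sets as minimal graphs in a fixed normal chart using Lemmas \ref{existence of tubes} and \ref{lams have C0 fields}, invoke the Schauder bounds (\ref{norms on uk}) and Arzel\`a--Ascoli to extract a $C^2$ limit, and observe that the limiting graph is a generalized level set through $p$ satisfying (\ref{curvature bound on level sets}). The only cosmetic difference is how the common chart is arranged: the paper first uses $u\in L^\infty$ to pass to a monotone subsequence of the $y_n$ (so the $N_n$ are disjoint and Lemma \ref{lams have C0 fields} applies directly at once), whereas you achieve the same alignment via compactness of the tangent planes.
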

\begin{proof}
Let $p \in \overline S$, and choose $p_n \in \partial \{u > y_n\}$ converging to $p$.
Since $u \in L^\infty$, $(y_n)$ must be a bounded sequence.
So after passing to a subsequence, we may assume that $(y_n)$ converges monotonically to some $y \in \RR$.

After passing to a smaller ball and rescaling, we may use (\ref{curvature bound on level sets}) to assume that the the area-minimizing hypersurfaces $N_n := \partial \{u > y_n\}$ all have small curvature in $C^0$.
Thus by Lemmata \ref{existence of tubes} and \ref{lams have C0 fields}, each hypersurface $N_n$ can locally be viewed as a graph of a function $f_n$ which is small in $C^2$, in normal coordinates centered on $p$.
By (\ref{norms on uk}), $\|f_n\|_{C^3} \lesssim 1$ and so along a subsequence, $f_n \to f$ in $C^2$, for some $f$ whose graph is an area-minimizing hypersurface $N \ni p$ satisfying the same curvature bound (\ref{curvature bound on level sets}).
\end{proof}

% Moreover, since $(y_n)$ is a monotone sequence and the $N_n$ are disjoint, $(f_n)$ is also a monotone sequence.
% Therefore, after taking a reflection of the coordinate system if necessary, we may assume that $(f_n)$ is an increasing sequence.

% Thus $f \geq f_n$ for every $n$, and $f_n(0) < f(0) = 0$, since $x = (0, 0)$ is contained in $N$ but not any of the $N_n$.
% So by the maximum principle for minimal hypersurfaces, $N$ is disjoint from all the $N_n$.
% This argument works for any sequence $(x_n)$ which approximates $x$ with $(u(x_n))$ monotone, so $N \cap S$ is empty.

\begin{lemma}
Let $N, N'$ be generalized level sets.
Then $N = N'$, or $N \cap N'$ is empty.
\end{lemma}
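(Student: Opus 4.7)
The plan is to reduce the problem to showing that if $p \in N \cap N'$, then $N$ and $N'$ agree in a neighborhood of $p$, and then propagate this via a connectedness argument on the connected hypersurface $N$. By the definition of generalized level set, on a small ball $U$ around $p$ one may choose sequences of level sets such that $N_n \cap U \to N \cap U$ and $N_n' \cap U \to N' \cap U$ in $C^2$. From the preceding analysis in this subsection, any two level sets are either identical or disjoint (different values of $y$ yield disjoint or equal boundaries by Proposition \ref{maximum principle}, and distinct connected components of $\partial\{u>y\}$ are disjoint by definition). Passing to a subsequence, we may thus assume either $N_n = N_n'$ for every $n$, in which case $N \cap U = N' \cap U$ follows upon taking limits, or $N_n \cap N_n' = \emptyset$ for every $n$, which is the case requiring work.

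In the disjoint case, I first claim $T_p N = T_p N'$. If not, $N$ and $N'$ cross transversely at $p$, so after shrinking $U$ the connected hypersurface $N' \cap U$ contains points strictly on both sides of the separating hypersurface $N \cap U$. By $C^2$-convergence the same persists for $N_n'$ relative to $N_n$ once $n$ is large, and connectedness of $N_n'$ then forces it to meet $N_n$, a contradiction. Hence $T_p N = T_p N'$, and after shrinking $U$ further, I can write $N$, $N'$, $N_n$, $N_n'$ (restricted to $U$) as graphs $f$, $f'$, $f_n$, $f_n'$ over $T_p N$ with $f(0) = f'(0) = 0$ and $\nabla f(0) = \nabla f'(0) = 0$.

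Disjointness of $N_n$ and $N_n'$ over a connected graph domain forces $f_n < f_n'$ or $f_n > f_n'$ there. Passing to a further subsequence I may assume $f_n \leq f_n'$, hence $f \leq f'$ in the $C^2$-limit, so $N'$ lies on one side of $N$ near $p$ and meets it at $p$. Proposition \ref{maximum principle} then yields $N = N'$ in a neighborhood of $p$. Re-running the argument at any point of $N \cap N'$ shows that the set $\{q \in N : q \in N' \text{ and } N = N' \text{ near } q\}$ is open in $N$ by construction and closed by the foregoing (any limit point lies in $N \cap N'$, whence the argument applies there). As $N$ is connected and this set contains $p$, it equals $N$, so $N \subseteq N'$; by symmetry $N = N'$.

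The main obstacle is ruling out transverse intersection in the disjoint approximating case; once the tangent spaces are shown to coincide, the tangency/one-sidedness condition of Proposition \ref{maximum principle} handles the rest cleanly, and the final global step is a standard open-and-closed argument using connectedness.
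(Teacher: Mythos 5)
Your proof is correct and relies on the same three ingredients as the paper's proof: writing the hypersurfaces as graphs near the contact point, the disjointness of the true level sets, and the maximum principle (Proposition \ref{maximum principle}). The logical organization is different, though, and it is worth contrasting the two. The paper argues by contradiction: assuming the local graphs $f, f'$ are distinct, the maximum principle forces $f - f'$ to change sign near the contact point, so for large $n$ the approximating $f_n, f_n'$ also change sign relative to each other, and the intermediate value theorem produces an intersection of two distinct level sets — contradiction. You instead run the implications forward: disjointness of $N_n$ and $N_n'$ forces a uniform ordering $f_n \leq f_n'$, this passes to the limit to give $f \leq f'$, and then the maximum principle (with tangency and contact at $p$) upgrades one-sidedness to equality. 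The two arguments are essentially contrapositives of each other. Your version makes explicit two points the paper leaves implicit: first, you separately rule out transverse intersection to establish tangency before applying the maximum principle, whereas the paper's ``saddle point'' remark tacitly disposes of the non-tangent case because a non-vanishing gradient difference already gives a sign change; second, you carry out an open-closed argument to globalize the local agreement to $N = N'$, whereas the paper silently assumes the contact point $p$ can be chosen so that $f$ and $f'$ are genuinely distinct as local graphs (which itself amounts to the same open-closed observation). Both proofs are sound; yours is a little longer but more self-contained.
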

\begin{proof}
If $N, N'$ are distinct generalized level sets which intersect at some point $p$, then after passing to a small neighborhood of $p$, we may assume that $N, N'$ are the graphs of functions $f, f'$ which are approximated in $C^2$ by sequences $f_n, f_n'$ whose graphs are level sets.
Since $p \in N \cap N'$ there exists $x$ such that $f(x) = f'(x)$.
By the maximum principle, Proposition \ref{maximum principle}, and the fact that $f, f'$ are distinct, $x$ is a saddle point of $f - f'$, so there exist $x_+, x_-$ close to $x$ with $f(x_+) > f'(x_+)$ and $f(x_-) < f'(x_-)$.
Therefore for $n$ large enough, $f_n(x_+) > f_n'(x_+)$ and $f_n(x_-) < f'_n(x_-)$, so by the intermediate value theorem there exist $x_n$ with $f_n(x_n) = f_n(x_n')$.
But the level sets of $u$ are disjoint, so this is a contradiction.
\end{proof}

So by Theorem \ref{regularity theorem}, the generalized level sets are the leaves of a Lipschitz minimal lamination $\lambda$, which by (\ref{level sets define support}) has support equal to $\overline S = \supp \dif u$.

\subsubsection{Discarding the exceptional level sets}
Our next task is to show that every generalized level set, which is not a level set, is a component of $\partial \{u < y\}$ for some $y \in \RR$.
We work in flow box coordinates $(k, x) \in I \times J$ near $p$, and write $u(k, x) = \tilde u(k)$.
Then $\tilde u \in BV_\loc(I)$, and we shall be interested in the monotonicity properties of $\tilde u$.
In Appendix \ref{BV appendix} we discuss monotonicity properties of functions of locally bounded variation.

\begin{lemma}
For each $k \in I$ there is a neighborhood $I' \subseteq I$ of $k$ such that $\tilde u|_{I'}$ is monotone.
\end{lemma}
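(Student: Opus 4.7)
The plan is to argue by contradiction and produce a compactly supported competitor to $u$ inside the flow box whose total variation is strictly smaller than that of $u$, contradicting the least gradient hypothesis on $B(p, r)$.

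Suppose $\tilde u$ were not monotone on any neighborhood of $k_0 \in I$. Working with the precise representative of $\tilde u \in BV_\loc(I)$, the negation of local monotonicity provides, in every such neighborhood, three points $k_1 < k_2 < k_3$ at which $\tilde u$ exhibits a strict local extremum. Up to replacing $u$ with $-u$, I may assume $\tilde u(k_2) > \max(\tilde u(k_1), \tilde u(k_3))$; I then fix $y$ strictly between these values, and let $(k_-^y, k_+^y)$ be the connected component of $\{\tilde u > y\}$ containing $k_2$. Since $\tilde u(k_1), \tilde u(k_3) < y$, we have $[k_-^y, k_+^y] \Subset (k_1, k_3) \Subset I$.

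I would then build the competitor by one-dimensional truncation, setting $\tilde v := \min(\tilde u, y)$ on $(k_-^y, k_+^y)$ and $\tilde v := \tilde u$ elsewhere, and lift it as $u'(k, x) := \tilde v(k)$ inside the flow box, extended by $u$ outside. Since $u(k, x) = \tilde u(k)$, the Riemannian total variation over the flow box factors as
\begin{equation*}
\int_{F(I \times J)} \star |\dif u| = \int_I A(k) \dif |\dif \tilde u|(k)
\end{equation*}
for a continuous positive weight $A$ determined by the flow box metric, and the analogous identity holds for $u'$. The truncation kills the peak of $\tilde u$ above $y$ inside $(k_-^y, k_+^y)$, so $|\dif \tilde v|((k_-^y, k_+^y)) = 0$ while $|\dif \tilde u|((k_-^y, k_+^y)) \geq \tilde u(k_2) - y > 0$. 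Thus
\begin{equation*}
\int_{B(p, r)} \star |\dif u'| < \int_{B(p, r)} \star |\dif u|,
\end{equation*}
and since $u' = u$ outside the compact set $F([k_-^y, k_+^y] \times J) \subset B(p, r)$, this contradicts the least gradient property of $u$ on $B(p, r)$, giving the claim.

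The main obstacle is the $BV$ bookkeeping at the endpoints $k_\pm^y$: a priori $\tilde u$ could jump across $y$ there, and one must check that the truncation does not introduce variation that cancels the interior savings. This is resolved by the open-set structure of $\{\tilde u > y\}$: its interior one-sided limits $\tilde u(k_-^y +), \tilde u(k_+^y -)$ are $\geq y$ while the corresponding exterior one-sided limits are $\leq y$, so the jumps of $\tilde v$ at $k_\pm^y$ are pointwise dominated by those of $\tilde u$, and no variation is created at the splicing points.
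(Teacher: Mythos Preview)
Your competitor argument has a genuine gap at the \emph{lateral} boundary of the flow box. The perturbation $u'-u$ is supported on $F((k_-^y,k_+^y)\times J)$, whose closure meets the lateral face $F([k_-^y,k_+^y]\times\partial J)$ on the boundary of the chart. The local leaves $\{k\}\times J$ are not complete in $B(p,r)$; each leaf extends beyond the flow box, and $u$ continues to equal $\tilde u(k)$ along that extension. Hence setting $u'=u$ outside the chart creates a jump of size $(\tilde u(k)-y)_+$ across the lateral face, and this contributes to $\int\star|\dif u'|$ a term that is entirely absent from your factorization $\int_I A(k)\,\dif|\dif\tilde v|$. (If instead you propagate the truncation along the full leaves, $u'-u$ is no longer compactly supported in $B(p,r)$.) Either way the strict inequality is unproved; the endpoint bookkeeping at $k_\pm^y$ that you flagged is the minor issue.

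The gap is repairable by the same $\varepsilon$-balance the paper uses, though via a different mechanism. If you confine $k_1,k_2,k_3$ to an interval of length $\varepsilon$, the lateral jump costs at most $O(\varepsilon h)$ with $h=\tilde u(k_2)-y$, while the truncation saves at least $2A_{\min}h$, so for $\varepsilon$ small the saving wins. The paper avoids building a competitor altogether: it takes the calibration field $X$ from Theorem~\ref{relaxed formulation} and applies the divergence theorem on the thin slab $[k_1,k_2]\times J$. Because the polar part satisfies $\sigma(k_1)=-1$ and $\sigma(k_2)=+1$, $X$ is the outward unit normal on \emph{both} transverse faces, so their flux is bounded below by a fixed $2c\delta^{d-1}$ (via the monotonicity formula), while the lateral faces contribute $O(\varepsilon)$; this contradicts $\nabla\cdot X=0$. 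The calibration route sidesteps both the lateral-jump accounting and the $BV$ endpoint analysis.
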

\begin{proof}
Suppose not, and let $\sigma: \supp \dif \tilde u \to \{\pm 1\}$ be the polar part of $\dif \tilde u$.
Then, by Lemma \ref{monotonicity dichotomy}, for each $\varepsilon > 0$, possibly after replacing $\tilde u$ with $-\tilde u$, we may find $k_1 < k_2$ in $[k - \varepsilon, k + \varepsilon] \cap \supp \dif \tilde u$ such that $\sigma(k_1) = -1$ and $\sigma(k_2) = +1$, and $k_1, k_2$ are Lebesgue points of $\sigma$ with respect to $|\dif \tilde u|$.
In particular, $\{k_i\} \times J$ is a generalized level set, and in particular a minimal hypersurface.

Let $R := [k_1, k_2] \times J$.
By Theorem \ref{relaxed formulation} and the discussion after it, there is a vector field $X$ defined near $p$ such that $\nabla \cdot X = 0$, $X$ is the unit normal of $\{k = k_1\} \times J$ and $\{k = k_2\} \times J$ oriented in the direction that $u$ is increasing, and $\|X\|_{L^\infty} \leq 1$ (where lengths and angles are taken with respect to the Riemannian metric on $M$, not the euclidean metric in the flow box).
By our contradiction hypothesis, we may choose the orientation on $R$ such that for $i = 1, 2$,
$$\int_{\{k = k_i\} \times J} X \cdot \normal_{\partial R} \dif \mathcal H^{d - 1} = \mathcal H^{d - 1}(\{k = k_i\} \times J),$$
where $\mathcal H^{d - 1}, \normal_{\partial R}$ are again computed using the metric.
By the Lipschitz nature of the flow box coordinates, there is $\delta > 0$ independent of $\varepsilon$ such that $\{k = k_i\} \times J$ contains a ball of radius $\delta$.
So by (\ref{monotonicity formula}), there exists $c > 0$ independent of $\varepsilon$ such that
$$\mathcal H^{d - 1}(\{k = k_i\} \times J) \geq c\delta^{d - 1}.$$
On the other hand, $R$ has $2d$ faces, and if $\sigma$ is a $d - 1$-face of $R$ which is not a generalized level set, then $[k_1, k_2]$ is an edge of $\sigma$, so by the Lipschitz regularity, there is $C > 0$ independent of $\varepsilon$ such that $|\sigma| \leq C\varepsilon$.
So by the divergence theorem and the fact that $\|X\|_{L^\infty(\sigma)} \leq 1$ (see the discussion after Theorem \ref{relaxed formulation}),
$$0 = \int_{\partial R} X \cdot \normal_{\partial R} \dif \mathcal H^{d - 1} \geq 2c\delta^{d - 1} - 2(d - 1)C \varepsilon.$$
If $\varepsilon$ is taken small enough, this is a contradiction.
\end{proof}

Possibly after shrinking and reorienting the flow box, we may assume that $\tilde u$ is nondecreasing.
We call such a flow box an \dfn{oriented flow box}.
Under this assumption, we may partition $I$ into open intervals $I_i$ on which $\tilde u$ is constant, and a compact set $K$ on which $\tilde u$ is strictly increasing.

Let $N$ be a generalized level set, so the restriction of $N$ to the flow box is $\{k\} \times J$ for some $k \in K$.
Let $C$ be the connected component of $K$ containing $k$ (noting carefully that $K$ may be totally disconnected).
Since $C$ is a connected closed subset of $\RR$, $C$ is a closed interval.
If $k$ is not the right endpoint of $C$, then there are $k_n > k$ decreasing to $k$ with, for any $x, x' \in J$,
$$u(k_n, x) = \tilde u(k_n) > \tilde u(k) = u(k, x').$$
Therefore $N$ is a component of $\partial \{u > \tilde u(k)\}$, and in particular $N$ is a level set.

So if $N$ is not a level set, $k$ is a right endpoint of $C$.
So there are $k_n < k$ increasing to $k$ with $\tilde u(k_n) < \tilde u(k)$, so reasoning as above, $N$ is a component of $\partial \{u < \tilde u(k)\}$.

\subsubsection{Construction of Ruelle-Sullivan current}
We work in oriented flow box coordinates $(k, x) \in I \times J$, with the function $\tilde u$ defined by $\tilde u(k) = u(k, x)$ as above, and let $K := \supp \dif \tilde u$.
We obtain a measure $\mu$ on $K$ by setting, for $k_1 < k_2$,
$$\mu([k_1, k_2] \cap K) := \tilde u(k_2) - \tilde u(k_1).$$
Since the coordinates are oriented, $\mu$ is a positive Radon measure.

Suppose that $(k', x') \in I' \times J$ is a different oriented flow box coordinate system, with $\tilde u'$ and $K'$, and the transition map carries $k_1, k_2$ to $k_1', k_2'$.
If $(k, x)$ and $(k', x')$ both are coordinate representations of $p \in M$, then $\tilde u(k, x) = u(p) = \tilde u'(k', x')$, hence
$$\mu'([k_1', k_2'] \cap K') := \tilde u'(k_2') - \tilde u'(k_1') = \tilde u(k_2) - \tilde u(k_1) = \mu([k_1, k_2] \cap K).$$
So $\mu$ is transverse, and arises from the disintegration of $\star |\dif u|$ in coordinates.
It follows from (\ref{polar ruelle sullivan}) that $\dif u = \normal_\lambda |\dif u|$ is the Ruelle-Sullivan current for the given measured oriented structure on $\lambda$.

%%%%%%%%%%%%%%%%

\subsection{Proof of Theorem \texorpdfstring{\ref{main thm}}{B}: Minimal lamination implies locally least gradient}
Suppose that $H^1(M, \RR) = 0$ and we are given a measured oriented minimal lamination $\lambda$ with bounded curvature, which then has a Ruelle-Sullivan current $\dif u$.
Suppose that the theorem is false, so one of the following holds:
\begin{enumerate}
\item $u$ does not have locally least gradient. \label{loc least}
\item The leaves of $\lambda$ are area-minimizing, but $u$ does not have least gradient. \label{minimizer}
\end{enumerate}
In either case, we are going to choose an open set $E$, such that $u|_E$ does not have least gradient, but every leaf $N$ of $\lambda$ satisfies that $N \cap E$ is area-minimizing.

In case (\ref{loc least}), we use Proposition \ref{minimal implies locally minimizing} to find an open cover $\mathcal U$ of $M$, such that for every $D \in \mathcal U$, and every leaf $N$ of $\lambda$ which meets $D$, $N \cap D$ is area-minimizing in $D$.
Since $u$ does not have locally least gradient, there exists $E \in \mathcal U$ such that $u|_E$ does not have least gradient.

In case (\ref{minimizer}), let $E := M$.

Since $u|_E$ does not have least gradient, there exists $v \in BV_\cpt(E)$ such that
\begin{equation}\label{not least gradient compact support}
\int_E \star |\dif u| > \int_E \star |\dif u + \dif v|.
\end{equation}
In particular, there exists a collar neighborhood $F$ of the boundary such that $v|_F = 0$.
Then for each $y \in \RR$,
$$\partial \{u > y\} \cap F = \partial \{u + v > y\} \cap F,$$
so that $1_{\{u + v > y\}} - 1_{\{u > y\}}$ has compact support in $E$.
Since $\partial \{u > y\}$ is area-minimizing in $E$, $1_{\{u > y\}}$ has least gradient in $E$.
So we may estimate using the coarea formula (\ref{coarea formula}):
\begin{align*}
\int_E \star |\dif u| &= \int_{-\infty}^\infty \int_E \star |\dif 1_{\{u > y\}}| \dif y \leq \int_{-\infty}^\infty \int_E \star |\dif 1_{\{u + v > y\}}| \dif y = \int_E \star |\dif u + \dif v|
\end{align*}
which is a contradiction of (\ref{not least gradient compact support}).
This completes the proof of Theorem \ref{main thm}.

%%%%%%%%%%%%%%%%%%%%%%%%%%%%
\subsection{The G\'orny decomposition}\label{1harmonic apps}

We now consider an analogue of the G\'orny decomposition \cite[Theorem 1.2]{górny2017planar} of a function of least gradient.
Recall that a \dfn{Cantor function} is a continuous function whose exterior derivative is mutually singular with Lebesgue measure.
A \dfn{jump function} is a function $u \in BV_\loc$ such that $\dif u$ equals its own jump part as in \cite[Definition 3.91]{Ambrosio2000FunctionsOB}.
In general, it is not possible to decompose a function $u$ of bounded variation into an absolutely continuous (that is, $W^{1, 1}_\loc$) part, a Cantor part, and a jump part \cite[Example 4.1]{Ambrosio2000FunctionsOB}.
G\'orny showed that for a function of least gradient on euclidean space, such a decomposition exists.

We give a new proof using Theorem \ref{main thm} which applies on curved domains.
As a byproduct, we obtain a new proof of the continuity of jump-free functions of least gradient \cite[Theorem 4.1]{HakkarainenKorteLahtiShanmugalingam+2015}, though their result holds in the higher generality that $M$ is a metric measure space.

\begin{proposition}
Let $u$ be a function of locally least gradient, and suppose that $H^1(M, \RR) = 0$. Then there exists a decomposition of $u$ into functions of locally least gradient
$$u = u_{ac} + u_C + u_j,$$
with mutually singular exterior derivatives, such that $u_{ac} \in W^{1, 1}_\loc(M) \cap C^0(M)$, $u_C$ is a Cantor function, and $u_j$ is a jump function.
Up to addition of additive constants, this decomposition is unique.
\end{proposition}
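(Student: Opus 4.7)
The plan is to use Theorem \ref{main thm} to translate the decomposition problem for $u$ into a Lebesgue decomposition problem for a transverse measure, where the desired three-part splitting is available for free, and then invert Theorem \ref{main thm} to reconstruct the three summands. By Theorem \ref{main thm}(1), there exist a Lipschitz minimal lamination $\lambda$ of bounded curvature, a transverse measure $\mu$, and an orientation such that $\dif u = T_\mu$. By Theorem \ref{regularity theorem}, we may choose a laminar atlas $(F_\alpha)$ in which every $F_\alpha$ is bi-Lipschitz, and we may further pass to oriented flow box coordinates so that each transverse measure $\mu_\alpha$ is a Radon measure on a compact set $K_\alpha \subset I$.

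In each flow box, take the Lebesgue decomposition of $\mu_\alpha$ with respect to the Lebesgue measure $\dif k$ on $I$:
$$\mu_\alpha = (\mu_{ac})_\alpha + (\mu_C)_\alpha + (\mu_j)_\alpha,$$
into absolutely continuous, continuous singular (Cantor), and purely atomic parts. The transition maps $\psi_{\alpha\beta}$ are bi-Lipschitz by Theorem \ref{regularity theorem}, and a bi-Lipschitz map of $\RR$ preserves each of the three classes (Lipschitz maps have Lusin's N property, hence preserve absolute continuity; atoms correspond to atoms; and the Cantor part is characterized as the continuous singular part). Hence the three families $(\mu_{ac})_\alpha$, $(\mu_C)_\alpha$, $(\mu_j)_\alpha$ glue to transverse measures on the sublaminations $\lambda_{ac},\lambda_C,\lambda_j$ of $\lambda$ obtained by passing to the supports. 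Each sublamination inherits bounded curvature and minimality from $\lambda$.

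Each of $T_{\mu_{ac}}, T_{\mu_C}, T_{\mu_j}$ is a closed $1$-current, so the assumption $H^1(M,\RR)=0$ yields primitives $u_{ac}, u_C, u_j$, unique up to constants, with $u = u_{ac} + u_C + u_j$. By the converse direction of Theorem \ref{main thm}, each summand has locally least gradient. In an oriented flow box, $u_{ac}$ depends only on $k$ and its $k$-derivative is the density of $(\mu_{ac})_\alpha$; since a $1$-dimensional absolutely continuous function is continuous, $u_{ac}$ is continuous. For the $W^{1,1}_{\loc}$ property, note that the flow box gives a Fubini decomposition of $\star|T_{\mu_{ac}}|$ as (density)$\,\dif k\otimes \mathcal H^{d-1}|_J$ up to bounded Jacobian factors (the flow box is bi-Lipschitz), so $T_{\mu_{ac}}$ is absolutely continuous with respect to Riemannian volume and $u_{ac} \in W^{1,1}_\loc$. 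The same Fubini argument shows $|T_{\mu_C}|$ is singular with respect to volume while $u_C$ is continuous (no atoms), so $u_C$ is a Cantor function, and $T_{\mu_j}$ is concentrated on the countable union of leaves labeled by the atoms of $\mu_j$, which is exactly the jump set of $u_j$; the vanishing of the other parts gives $\dif u_j = D^j u_j$. The mutual singularity of $\dif u_{ac}, \dif u_C, \dif u_j$ follows from the mutual singularity of the three parts of the Lebesgue decomposition on each flow box.

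For uniqueness, any competing decomposition $u = v_{ac} + v_C + v_j$ with the stated properties yields, via Theorem \ref{main thm}, transverse measures whose Ruelle-Sullivan currents sum to $\dif u$ and whose flow box representatives are respectively absolutely continuous, continuous singular, and purely atomic; uniqueness of the Lebesgue decomposition on each $K_\alpha$ then forces these representatives to agree with $(\mu_{ac})_\alpha,(\mu_C)_\alpha,(\mu_j)_\alpha$, whence $v_{ac} - u_{ac}$ etc.\ are locally constant, and (since $M$ is connected, as is implicit in $H^1(M,\RR)=0$) globally constant. The main technical obstacle is checking that the bi-Lipschitz gluing really respects the three-way splitting on the nose: this reduces to Lusin's N property for Lipschitz maps and the purely measure-theoretic fact that bi-Lipschitz maps preserve continuous singular, atomic, and absolutely continuous parts; everything else is a routine application of the machinery developed in Theorem \ref{main thm} and Theorem \ref{regularity theorem}.
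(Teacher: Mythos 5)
Your overall scaffolding matches the paper's: take the Lebesgue decomposition of the transverse measure (equivalently, of the $1$-dimensional function $\tilde u^\alpha$) in each flow box, use bi-Lipschitz transition maps to show the decomposition is coherent across charts, and use $H^1(M,\RR)=0$ to produce primitives. The bi-Lipschitz-invariance observation (Lusin's N property, preservation of atoms and of the continuous singular part) is exactly what the paper uses to justify the gluing. However, there are two points worth flagging, one of which is a genuine gap.

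The genuine gap is the phrase ``By Theorem~\ref{main thm}(1), there exist a Lipschitz minimal lamination $\lambda$ of bounded curvature.'' Theorem~\ref{main thm}(1) does \emph{not} assert a global curvature bound on $\lambda$. The bound $\|\Two_N\|_{C^0}\le C/r$ in Lemma~\ref{choose balls for main thm} holds only on a ball $B(p,r)$ with $r\le R(p)$, and $R(p)$ may degenerate near an infinite end or a boundary of $M$, so the bound is local but not necessarily global. Your subsequent invocation of the converse direction, Theorem~\ref{main thm}(2)(a), explicitly requires bounded curvature, so as written the argument doesn't go through. This is fixable --- since locally least gradient is itself a local property, you can restrict to a small geodesic ball $B(p,r)$ where the curvature bound \emph{does} hold, check that $H^1(B(p,r),\RR)=0$, and apply Theorem~\ref{main thm}(2)(a) there --- but that repair should be made explicit.

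The second point is a genuine difference of route, and it is worth noting because the paper's choice sidesteps the gap entirely. Rather than invoking the (heavier) converse direction of Theorem~\ref{main thm}, the paper proves directly that each summand $u_\sigma$ has least gradient on $V_\alpha$ by a short contradiction argument: since $\dif u_{ac},\dif u_C,\dif u_j$ are mutually singular, $\star|\dif u|=\star(|\dif u_{ac}|+|\dif u_C|+|\dif u_j|)$; if some $v\in BV_{\cpt}(V_\alpha)$ strictly decreased $\int\star|\dif u_\sigma|$, then by the triangle inequality for masses it would also strictly decrease $\int\star|\dif u|$, contradicting that $u|_{V_\alpha}$ has least gradient. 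This argument (borrowed from G\'orny) requires no curvature bound and no appeal to the forward--backward machinery of Theorem~\ref{main thm}(2); it only uses Theorem~\ref{main thm}(1) to set up the flow box structure. Your route is more conceptual but at the cost of needing to verify extra hypotheses; the paper's route is more elementary and self-contained at this step.
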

\begin{proof}
We work in flow box coordinates $(k, x) \in I \times J$ on some open set $V_\alpha \subseteq M$.
We may assume that $V_\alpha$ is so small that $u|_{V_\alpha}$ has least gradient.
In the flow box coordinates, $u(k, x) = \tilde u^\alpha(k)$ for some $\tilde u_\alpha: I \to \RR$. 
Since $\dif \tilde u^\alpha$ is the transverse measure, it is a Radon measure, so $\tilde u^\alpha$ has bounded variation and hence we have the Lebesgue decomposition on an interval \cite[Corollary 3.33]{Ambrosio2000FunctionsOB} 
$$\tilde u^\alpha = \tilde u^\alpha_{ac} + \tilde u^\alpha_C + \tilde u^\alpha_j$$
where $\tilde u^\alpha_{ac} \in W^{1, 1}_\loc(I)$, $\tilde u^\alpha_C$ is a Cantor function, and $\tilde u_j^\alpha$ is a jump function.
This decomposition is unique up to an addition of constants, and induces a decomposition of $\dif \tilde u^\alpha$ into mutually singular measures.
We then write $u^\alpha_\sigma(k, x) = \tilde u^\alpha_\sigma(k)$ to obtain a function on $V_\alpha \subseteq M$, where $\sigma \in \{ac, C, j\}$.
The functions $u^\alpha_{ac}, u^\alpha_C, u^\alpha_j$ are $W^{1,1}_\loc$, Cantor, and jump respectively, since Lipschitz isomorphisms preserve these conditions.
Moreover, since $\tilde u^\alpha_{ac}$ and $\tilde u^\alpha_C$ are jump-free functions of bounded variation on an interval, they are continuous; hence $u^\alpha_{ac}$ and $u^\alpha_C$ are continuous as well.

We next claim that $\dif u^\alpha_{ac}, \dif u^\alpha_C, \dif u^\alpha_j$ are mutually singular.
Since $\dif \tilde u^\alpha_{ac}, \dif \tilde u^\alpha_C, \dif \tilde u^\alpha_j$ are mutually singular, we have a decomposition
$$I = I_{ac} \sqcup I_C \sqcup I_j$$
such that for $\sigma \neq \tau$, $I_\tau$ is a $\dif \tilde u^\alpha_\sigma$-null set.
Applying Fubini's theorem, the same decomposition holds for $\dif u^\alpha_\sigma$ and $I \times J \cong V_\alpha$, implying mutual singularity of the $\dif u^\alpha_\sigma$.

We now claim that $u^\alpha_\sigma$ have least gradient on $V_\alpha$; this step is identical to the analogous step in \cite{górny2017planar}.
To ease notation we do this for $\sigma = j$; the other cases are similar.
If the claim fails, then there is some $v \in BV_\cpt(V_\alpha)$ such that $\int \star |\dif u^\alpha_j| > \int \star |\dif (u^\alpha_j + v)|$.
But if so, then by mutual singularity,
\begin{align*}
\int_{V_\alpha} \star |\dif u| &= \int_{V_\alpha} \star (|\dif u^\alpha_j| + |\dif u^\alpha_C| + |\dif u^\alpha_{ac}|) 
>  \int_{V_\alpha} \star (|\dif u^\alpha_j + \dif v| + |\dif u^\alpha_C| + |\dif u^\alpha_{ac}|) \\
&\geq \int_{V_\alpha} \star |\dif u + \dif v|,
\end{align*}
which contradicts that $u$ has least gradient.

Finally we glue the local decompositions together.
The measure-preserving property of transition maps and the uniqueness of the Lebesgue decomposition implies that
$$\dif u^\alpha_\sigma|_{V_\alpha \cap V_\beta} = \dif u^\beta_\sigma|_{V_\alpha \cap V_\beta}.$$
As closed currents form a sheaf, it follows that there exist unique closed currents $\dif u_\sigma$ on all of $M$ such that $\dif u_\sigma|_{V_\alpha} = \dif u^\alpha_\sigma$.
Since $H^1(M, \RR) = 0$, $\dif u_\sigma$ has an antiderivative $u_\sigma$, which has locally least gradient, since $u_\sigma|_{V_\alpha} = u_\sigma^\alpha$ has least gradient.
\end{proof}

\section{Compactness of the space of laminations}\label{CompactnessSec}
In this section we prove Theorem \ref{compactness theorem}, the compactness theorem.
We then apply it to explore the implications between the different modes of convergence.

%%%%%%%%%%%%%%%%%%

\subsection{Proof of Theorem \texorpdfstring{\ref{compactness theorem}}{C}}
\subsubsection{Construction of the limiting flow box}
Let $P \in M$, and let $(\lambda_n)$ be a sequence of minimal laminations of bounded curvature, such that every leaf of every lamination meets a compact set.

By Theorem \ref{regularity theorem}, there exist $r > 0$ and $L \geq 1$ such that for every large $n \in \NN$, $B(P, r)$ is contained in the image of a flow box $F_n$ for $\lambda_n$ with Lipschitz constant $L$, such that $F_n(0, 0) = P$, and such that $F_n$ is bounded in tangential $C^\infty$ independently of $n$.
By the Arzela-Ascoli theorem, along a subsequence $F_n \to F$ in $C^0$ for some map $F: I \times J \to B(P, r)$ and some $I \subseteq \RR$, $J \subseteq \RR^{d - 1}$, such that on the image $V$ of $F$, we also have the convergence $F_n^{-1} \to F^{-1}$.
Moreover, $F(0, 0) = P$, so that $F: I \times J \to V$ is a homeomorphism onto a set which contains $P$.
Since
$$\max(\Lip(F), \Lip(F^{-1})) \leq \limsup_{n \to \infty} \max(\Lip(F_n), \Lip(F_n^{-1})) \leq L,$$
it follows that $\max(\Lip(F), \Lip(F^{-1})) \leq L$, and for any $\theta \in (0, 1)$,
\begin{align*}
	\|F - F_n\|_{C^\theta}
	&\leq \Lip(F - F_n)^\theta \|F - F_n\|_{C^0}^{1 - \theta} \leq (2L)^\theta \|F - F_n\|_{C^0}^{1 - \theta}.
\end{align*}
It follows that $F_n \to F$ in $C^\theta$, hence in $C^{1-}$, and similarly for $F^{-1}$.

Since $P$ was arbitrary, it follows that we can find laminar atlases $(F_\alpha^n, K_\alpha^n)$ for each large $n \in \NN$ such that $(F_\alpha^n)$ is bounded in tangential $C^\infty$, $F_\alpha^n \to F_\alpha$ and $(F_\alpha^n)^{-1} \to (F_\alpha)^{-1}$ in $C^{1-}$, where the images of $F_\alpha$ and $F_\alpha^n$ are an open cover $(U_\alpha)$ of $M$ independent of $n$, and $(F_\alpha)$ satisfies the usual transition relations, and $F_\alpha$ is a Lipschitz isomorphism.

We fix the data $L$, $(F_\alpha^n, K_\alpha^n)$, $U_\alpha$, and $F_\alpha$ for the remainder of the proof of Theorem \ref{compactness theorem}.

%%%%%%%%%%%%%%%%%%%%%%%

\subsubsection{Construction of the limiting lamination}
We now construct the limiting lamination.
We employ the Hausdorff hyperspace $\Hypspace I$ of closed subsets of $I$ to accomplish this.
Since $I$ is a compact metric space, so is $\Hypspace I$ \cite[Theorem 4.17]{nadler2017continuum}, so we may diagonalize so that for every $\alpha$, either $K^n_\alpha \to K_\alpha$ for some nonempty $K_\alpha$ in the Hausdorff distance on $I$, or there exists $n^*(\alpha)$ such that for every $n \geq n^*(\alpha)$, $K_\alpha^n$ is empty (in which case we define $K_\alpha = \emptyset$).

By assumption, there is a compact set $E \subseteq M$ such that for every $n$, every leaf of $\lambda_n$ meets $E$.
Since $E$ is compact, there exists a finite set $A_E \subseteq A$ such that $E \subseteq \bigcup_{\alpha \in A_E} U_\alpha$.

\begin{lemma}\label{label sets are nonempty}
	There exists $\alpha \in A_E$ such that $K_\alpha$ is nonempty.
\end{lemma}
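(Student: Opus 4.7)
The plan is to prove this by contradiction using the fact that $A_E$ is a \emph{finite} cover together with the assumption that every leaf of every $\lambda_n$ must meet $E$. The key observation is that $K_\alpha^n = \emptyset$ would force the flow box $F_\alpha^n$ to contain no leaves of $\lambda_n$, i.e.\ $\supp \lambda_n \cap U_\alpha = \emptyset$, by the defining relation $S \cap U_\alpha = F_\alpha(K_\alpha \times J)$ of a lamination.

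Concretely, I would suppose toward contradiction that $K_\alpha = \emptyset$ for every $\alpha \in A_E$. By the dichotomy established during the diagonalization (either $K_\alpha^n \to K_\alpha$ nonempty in Hausdorff distance, or $K_\alpha^n = \emptyset$ eventually), the second alternative must hold for each such $\alpha$: there exists $n^*(\alpha)$ with $K_\alpha^n = \emptyset$ for all $n \geq n^*(\alpha)$. Since $A_E$ is finite, I can set $n^* := \max_{\alpha \in A_E} n^*(\alpha)$, and for all $n \geq n^*$ simultaneously,
$$\supp \lambda_n \cap U_\alpha = F_\alpha^n(K_\alpha^n \times J) = \emptyset \qquad \text{for every } \alpha \in A_E.$$

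The conclusion then follows by covering: since $E \subseteq \bigcup_{\alpha \in A_E} U_\alpha$, the displayed equality gives $\supp \lambda_n \cap E = \emptyset$ for all $n \geq n^*$. But our standing hypothesis is that every leaf of $\lambda_n$ meets $E$, and laminations are nonempty by convention, so $\supp \lambda_n \cap E \neq \emptyset$, which is the desired contradiction. There is no real obstacle here; the argument is essentially a pigeonhole on the finite cover $A_E$, and the only thing to notice is that the compactness of $E$ (giving finiteness of $A_E$) is exactly what makes the ``eventually empty'' alternative untenable for all charts at once.
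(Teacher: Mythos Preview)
Your proof is correct and is essentially identical to the paper's own argument: assume all $K_\alpha$ are empty, take $n^* = \max_{\alpha \in A_E} n^*(\alpha)$ using finiteness of $A_E$, and obtain the contradiction that $\supp \lambda_n$ misses $E$ for $n \geq n^*$.
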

\begin{proof}
	Suppose not; then for
	$$n \geq \max_{\alpha \in A_E} n^*(\alpha)$$
	and $\alpha \in A_E$, $K_\alpha^n = \emptyset$, so no leaves of $\lambda_n$ meet $U_\alpha$, and hence no leaves of $\lambda_n$ meet $E$.
	This is a contradiction since $\lambda_n$ has a leaf.
\end{proof}

In each flow box $F_\alpha$ with $K_\alpha$ nonempty, we thus have the leaves of a lamination, namely $K_\alpha \times J$.
We now check the transition relations to ensure that they glue to a global lamination; this is straightforward but we include it for completeness.

Thus let $\psi_{\alpha \beta}$ and $\psi_{\alpha \beta}^n$ be the transition maps, thus $\psi_{\alpha \beta}^n$ induces a map
$$\psi_{\alpha \beta}^n: K_\alpha^n \to K_\beta^n.$$
By convergence of $(F_\alpha^n)$, $\psi_{\alpha \beta}$ induces a map $K_\alpha \to K_\beta$.

\begin{definition}
	A \dfn{cocycle of labels} $(k_\alpha)_{\alpha \in A'}$ is a set $A' \subseteq A$ and an element of $\prod_{\alpha \in A'} K_\alpha$, such that:
\begin{enumerate}
	\item The cocycle condition: $k_\beta = \psi_{\alpha \beta}(k_\alpha)$ for $\alpha, \beta \in A'$.
	\item Closure under transition maps: For every $\alpha \in A'$, if $\psi_{\alpha \beta}(k_\alpha)$ is well-defined, then $\beta \in A'$.
\end{enumerate}
\end{definition}

\begin{lemma}
	Every cocycle of labels $(k_\alpha)_{\alpha \in A'}$ defines a complete minimal hypersurface $N$ such that
	$$N \cap U_\alpha = F_\alpha(\{k_\alpha\} \times J).$$
\end{lemma}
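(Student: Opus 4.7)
The plan is to define $N := \bigcup_{\alpha \in A'} F_\alpha(\{k_\alpha\} \times J)$ and verify in turn that this set is well-defined, is a $C^\infty$ submanifold, has zero mean curvature, and is complete.

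\textbf{Consistency on overlaps.} For $\alpha, \beta \in A'$, the transition map $\psi_{\alpha\beta}$ preserves leaves by definition of a laminar atlas, and the cocycle condition $k_\beta = \psi_{\alpha\beta}(k_\alpha)$ (valid whenever $\psi_{\alpha\beta}(k_\alpha)$ is defined) yields
\[
F_\alpha(\{k_\alpha\} \times J) \cap U_\beta \;=\; F_\beta(\{k_\beta\} \times J) \cap U_\alpha,
\]
so the defining union is unambiguous. In particular, $N \cap U_\alpha = F_\alpha(\{k_\alpha\} \times J)$ for each $\alpha \in A'$.

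\textbf{Smoothness and minimality.} Fix $\alpha \in A'$. The Hausdorff convergence $K_\alpha^n \to K_\alpha$ furnishes labels $k_\alpha^n \in K_\alpha^n$ with $k_\alpha^n \to k_\alpha$, and the corresponding leaves $N_n := F_\alpha^n(\{k_\alpha^n\} \times J)$ of $\lambda_n$ are minimal with uniformly bounded second fundamental form. The tangential $C^\infty$ bounds on $F_\alpha^n$ supplied by Theorem \ref{regularity theorem}(4), together with the $C^{1-}$ convergence $F_\alpha^n \to F_\alpha$, let me apply Arzel\`a--Ascoli along leaves to upgrade $F_\alpha^n(k_\alpha^n, \cdot) \to F_\alpha(k_\alpha, \cdot)$ to tangentially $C^\infty$ convergence. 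Since the minimal surface equation is a second-order PDE intrinsic to each leaf, it passes to the limit, so $F_\alpha(\{k_\alpha\} \times J)$ is a $C^\infty$ minimal hypersurface in $U_\alpha$.

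\textbf{Completeness.} By the flow-box axiom, each local piece $F_\alpha(\{k_\alpha\} \times J)$ is complete as a hypersurface in $U_\alpha$; it remains to check that $N$ is closed in $M$. Suppose $p \in \overline N$ and pick any $\alpha \in A$ with $p \in U_\alpha$. Take $p_n \in N$ with $p_n \to p$; for $n$ large, $p_n \in U_\alpha$, and by construction $p_n \in F_{\alpha_n}(\{k_{\alpha_n}\} \times J)$ for some $\alpha_n \in A'$. Since $p_n \in U_{\alpha_n} \cap U_\alpha$, the transition map $\psi_{\alpha_n \alpha}$ is defined at $k_{\alpha_n}$, so closure under transition maps forces $\alpha \in A'$ and $k_\alpha = \psi_{\alpha_n \alpha}(k_{\alpha_n})$. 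Thus $p_n \in F_\alpha(\{k_\alpha\} \times J)$ for large $n$, and closedness of the latter in $U_\alpha$ gives $p \in N$. Hence $N$ is closed in $M$, i.e.\ a complete hypersurface.

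The main obstacle is the minimality step: one must be careful that tangentially $C^\infty$ convergence along leaves really does follow from the mere $C^{1-}$ convergence of the $F_\alpha^n$ transverse to the leaves. This relies essentially on the uniform-in-$n$ tangential $C^\infty$ bounds of Theorem \ref{regularity theorem}(4), which is what allows the compactness to be promoted to a strong enough topology to preserve the second-order minimal surface equation.
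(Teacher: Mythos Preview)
Your proof is correct, and the consistency and completeness steps match the paper's argument essentially verbatim (both exploit the closure-under-transition-maps axiom of a cocycle of labels to rule out limit points in charts $U_\alpha$ with $\alpha \notin A'$).

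The minimality step, however, follows a genuinely different route. The paper does not use the tangential $C^\infty$ bounds on the flow boxes here at all; instead it argues geometrically. From the Lipschitz bound $\Lip(F_\alpha^n) \leq L$ it extracts a uniform area bound $\mathcal H^{d-1}(N_n \cap U_\alpha) \lesssim L^{d-1}$, then feeds this together with the curvature bound into the compactness theorem for minimal hypersurfaces (Theorem~\ref{compactness for minimal surfaces}) to produce a $C^\infty$ limit $N'$. It then identifies $N'$ with $F_\alpha(\{k_\alpha\} \times J)$ by a squeeze: the $C^0$ convergence $F_\alpha^n \to F_\alpha$ forces $N'$ to lie in every neighborhood of $F_\alpha(\{k_\alpha\} \times J)$, and completeness of $N'$ in $U_\alpha$ finishes the identification. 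Your approach is more parametric: you run Arzel\`a--Ascoli directly on the restrictions $F_\alpha^n(k_\alpha^n,\cdot)$, using the uniform tangential $C^\infty$ bounds from Theorem~\ref{regularity theorem}(4), and then let the minimal surface equation pass to the $C^\infty$ limit. Your argument is slightly more self-contained (no area bound, no appeal to Theorem~\ref{compactness for minimal surfaces}), and in fact anticipates the tangential $C^\infty$ convergence that the paper establishes only later in \S\ref{CompactnessSec}. The paper's route has the virtue of making the area estimate explicit, which is of independent interest, and of keeping the minimality proof decoupled from the flow-box regularity.
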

\begin{proof}
We have the cocycle condition
$$(N \cap U_\alpha) \cap U_\beta = (N \cap U_\beta) \cap U_\alpha$$
which follows from the fact that
\begin{align*}
F_\alpha(\{k_\alpha\} \times J) \cap U_\beta
&= F_\beta(\psi_{\alpha \beta}(\{k_\beta\} \times J)) \cap U_\alpha \cap U_\beta \\
&= F_\beta(\psi_{\alpha \beta}(\{k_\beta\} \times J)) \cap U_\alpha.
\end{align*}
From the cocycle condition, it follows that $N$ honestly defines a Lipschitz hypersurface in $M$, which is complete in $\bigcup_{\alpha \in A'} U_\alpha$.
If $\overline N$ intersects $U_\alpha$ for some $\alpha \notin A'$, then $N$ intersects $U_\beta$ for some $\beta \in A'$ so that $U_\beta \cap U_\alpha \cap \overline N$ is nonempty.
But then $\psi_{\beta \alpha}(k_\beta)$ must be defined, so $\alpha \in A'$, a contradiction.
Therefore $N$ is complete in $M$.

For each $\alpha \in A'$, there exist $k_\alpha^n \in K_\alpha^n$ such that $k_\alpha^n \to k_\alpha$.
Let $N_n \cap U_\alpha$ be the corresponding minimal hypersurface in $M$.
Since $F_\alpha^n \to F_\alpha$ in $C^0$, for every neighborhood $V$ of $N \cap U_\alpha$, there exists $n_*$ such that if $n \geq n_*$ then $N_n \cap U_\alpha \subset V$.
However, the minimal hypersurfaces $N \cap U_\alpha$ are assumed to have bounded curvature, and satisfy $\partial (N \cap U_\alpha) \subset \partial U_\alpha$.
Moreover,
$$\mathcal H^{d - 1}(N \cap U_\alpha) \leq L^{d - 1} \mathcal H^{d - 1}(\{k_\alpha^n\} \times J) \lesssim L^{d - 1}$$
(where the measure of $\{k_\alpha^n\} \times J$ was taken with respect to the euclidean metric, hence is bounded by an absolute constant).
So by Theorem \ref{compactness for minimal surfaces}, as $n \to \infty$, $N_n \cap U_\alpha$ converges in $C^\infty$ to a minimal hypersurface $N' \subset U_\alpha$ with $\partial N' \subset \partial U_\alpha$.
Then $N' \subset V$ for every neighborhood $V$ of $N \cap U_\alpha$, so $N' \subseteq N \cap U_\alpha$.
The completeness of $N'$ then implies that $N = N \cap U_\alpha$.
\end{proof}

\begin{lemma}
	Let $\lambda$ be the lamination with laminar atlas $(F_\alpha, K_\alpha)$.
	Then $\lambda$ is well-defined and minimal.
\end{lemma}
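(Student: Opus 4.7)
The plan is to verify that $(F_\alpha, K_\alpha)_{\alpha \in A}$ satisfies the definition of a laminar atlas, whose leaves will then be minimal by the preceding lemma. The three properties to check are nonemptiness of the candidate support $S := \bigcup_\alpha F_\alpha(K_\alpha \times J)$, cocycle compatibility of the $K_\alpha$ under transition maps, and the local product structure $S \cap U_\alpha = F_\alpha(K_\alpha \times J)$. The analytic groundwork is already in place: Theorem \ref{regularity theorem} together with Arzel\`a-Ascoli produced the $F_\alpha$ and $F_\alpha^{-1}$ as $C^{1-}$ limits of the $F_\alpha^n$ and $(F_\alpha^n)^{-1}$ with a uniform Lipschitz bound, and the preceding lemma has already shown that every cocycle of labels yields a complete minimal hypersurface with the correct local description.

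Nonemptiness of $S$ is immediate from Lemma \ref{label sets are nonempty}. For the cocycle condition, I would argue by passage to the limit: for each $n$, the laminar atlas property of $(F_\alpha^n, K_\alpha^n)$ gives $\psi_{\alpha\beta}^n(K_\alpha^n \cap \mathrm{dom}\,\psi_{\alpha\beta}^n) \subseteq K_\beta^n$, and since $F_\alpha^n \to F_\alpha$ and $(F_\alpha^n)^{-1} \to F_\alpha^{-1}$ in $C^{1-}$ (hence $\psi_{\alpha\beta}^n \to \psi_{\alpha\beta}$ uniformly on compacts), while $K_\alpha^n \to K_\alpha$ in the Hausdorff topology on the compact interval $I$, the inclusion passes to the limit and yields $\psi_{\alpha\beta}(K_\alpha \cap \mathrm{dom}\,\psi_{\alpha\beta}) \subseteq K_\beta$. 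Closedness of $S$ in $M$ then reduces to closedness of each plaque set $F_\alpha(K_\alpha \times J)$ in $U_\alpha$, which holds because $K_\alpha$ is a Hausdorff limit of closed subsets of $I$ and $F_\alpha$ is a homeomorphism; the consistency we just proved ensures these local descriptions glue.

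For minimality and disjointness of leaves, every point $p \in S$ determines a cocycle of labels by unwinding $F_\alpha^{-1}(p)$ along the transition maps on a neighborhood of $p$, and the preceding lemma then produces a complete minimal hypersurface through $p$. These hypersurfaces are by construction the leaves of $\lambda$. Within any single chart, disjointness of plaques $F_\alpha(\{k\} \times J)$ for distinct $k \in K_\alpha$ is automatic, and the cocycle condition propagates this globally. The main subtlety I anticipate is verifying that the leaf through $p$ is independent of the chart used to start the unwinding, and in particular that two cocycles which share any label at any chart must coincide; this is exactly what the closure-under-transition-maps clause in the definition of cocycle of labels is designed to enforce, in conjunction with the compatibility established above.
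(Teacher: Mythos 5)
Your proof is correct and follows essentially the same outline as the paper's, which is quite terse: the paper notes that each $K_\alpha$ is compact so $\supp\lambda$ is closed, that some $K_\alpha$ is nonempty by Lemma \ref{label sets are nonempty} and each of its elements uniquely determines a cocycle of labels, and that the preceding lemma makes every such leaf a complete minimal hypersurface. The one point where you add detail that the paper treats elsewhere is the cocycle compatibility: you re-derive that $\psi_{\alpha\beta}(K_\alpha) \subseteq K_\beta$ by passing to the limit along the Hausdorff convergence $K_\alpha^n \to K_\alpha$ and the uniform convergence $\psi_{\alpha\beta}^n \to \psi_{\alpha\beta}$, whereas the paper establishes this compatibility immediately after diagonalizing the $K_\alpha^n$, before the lemma's statement. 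Both routes are fine; yours is more self-contained but slightly redundant given the surrounding text. Your observation about the closure-under-transition-maps clause enforcing chart-independence of the leaf through a point is exactly the uniqueness assertion the paper invokes with the phrase ``uniquely determines a cocycle of labels.''
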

\begin{proof}
Since 
$$\supp \lambda \cap U_\alpha = K_\alpha \times J$$
and $K_\alpha$ is compact, $\supp \lambda$ is closed.
Now if we choose $\alpha$ such that $K_\alpha$ is nonempty, every element of $K_\alpha$ uniquely determines a cocycle of labels, and hence a leaf of $\lambda$.
So $\supp \lambda$ is nonempty, and since all of its leaves are complete minimal, $\lambda$ is minimal.
\end{proof}

\subsubsection{Convergence in Thurston's geometric topology and $C^{1-}$}
At this stage of the argument we have constructed a limiting lamination with limiting flow boxes; we now check that the sequence of laminations actually converges to the limiting lamination.

If $K_\alpha$ is nonempty, then any $k_\alpha \in K_\alpha$ is the limit of some sequence $(k_\alpha^n)_n \in \prod_n K_\alpha^n$ \cite[Theorem 4.11]{nadler2017continuum}.
Thus $\{k_\alpha\} \times J$ can be written as the set of limits of sequences $(k_\alpha^n, x)_n \in \prod_n K_\alpha^n \times J$, and so any leaf $N$ of $\lambda$ takes the form $N = \lim_{n \to \infty} N_n$ for some sequence $(N_n) \in \prod_n \Leaves \lambda_n$, where $\Leaves \lambda_n$ is the set of leaves of $\lambda_n$.
In other words, leaves of $\lambda$ are pointwise limits of leaves in $\lambda_n$.

So it suffices to show that for $N \in \Leaves \lambda$, $P \in N$, and $P_n \to P$, where $P_n \in N_n$ and $N_n \in \Leaves \lambda_n$, $\normal_{N_n}(P_n) \to \normal_N(P)$.
To do this, suppose that $P \in U_\alpha$; $F_\alpha^n$ is close in tangential $C^\infty$ to $F_\alpha$, and the label $k^n_\alpha$ of $N_n$ is close to the label $k_\alpha$ of $N$.
In particular, if we consider $N$ and $N_n$ as graphs of functions $u, u_n$ in the coordinates induced by $F_\alpha$, then $u_n \to u$ in $C^\infty$; however, in such coordinates, $u$ is a constant.
A bootstrapping argument based on (\ref{nabla as a normal}) then shows that, since $\dif u_n \to 0$ in $C^0$, $\normal_{N_n} \to \partial_y = \normal_N$ in $C^0$ near $P$.
Therefore $\lambda_n \to \lambda$ in Thurston's geometric topology. Since we have already shown the convergence of the flow boxes in $C^{1-}$, we conclude convergence in the $C^{1-}$ flow box topology.

\subsubsection{Convergence in tangential $C^\infty$}
Next we check the convergence $F_\alpha^n \to F_\alpha$ in tangential $C^\infty$; this is slightly more subtle than convergence in $C^{1-}$ as the topology depends on the lamination.
Let $\nabla_{\rm tan} := (\partial_{y^1}, \dots, \partial_{y^{d - 1}})$ be the gradient tangent to $J$.
Since $F_\alpha^n$ is bounded in tangential $C^\infty$, for every $\ell \geq 1$ there exists $C_\ell > 0$ independent of $n$ such that if $k_\alpha^n \in K_\alpha^n$ and $y \in J$ then
\begin{equation}\label{bound in Cinfty}
|\nabla^\ell_{\rm tan} F_\alpha^n(k_\alpha^n, y)| \leq C_\ell.
\end{equation}
Let $k_\alpha^n \to k_\alpha$, so that $F_\alpha^n(k_\alpha^n, \cdot) \to F_\alpha(k_\alpha, \cdot)$ in $C^0$.
Observe that by the bound (\ref{bound in Cinfty}) with $\ell$ replaced by $\ell + 1$, we may diagonalize so that $\nabla^\ell_{\rm tan} F_\alpha^n(k_\alpha^n, \cdot) \to \nabla^\ell_{\rm tan} F_\alpha(k_\alpha, \cdot)$ in $C^0$.
So $F_\alpha^n \to F_\alpha$ in tangential $C^\infty$.
The inverse function theorem applied on each leaf of $\lambda$ implies that $(F_\alpha^n)^{-1} \to (F_\alpha)^{-1}$ in tangential $C^\infty$ as well.

\subsubsection{Maximality in Thurston's geometric topology}
Let $\lambda'$ be another lamination such that $\lambda_n \to \lambda'$ in the Thurston sense; we must show that $\lambda'$ is a sublamination of $\lambda$.
To this end, let $N$ be a leaf of $\lambda'$ and $\alpha \in A$ satisfy $N \cap U_\alpha \neq \emptyset$.
For every $x \in N \cap U_\alpha$ and $\varepsilon > 0$, there exist $n(x, \varepsilon)$, $N(x, \varepsilon) \in \Leaves \lambda_{n(x, \varepsilon)}$, and $y(x, \varepsilon)$ such that $\dist(x, y(x, \varepsilon)) < \varepsilon$ and $\dist_{SM}(\normal_N(x), \normal_{N(x, \varepsilon)}(y(x, \varepsilon))) < 2\varepsilon$.
As $\varepsilon \to 0$ we have $n(x, \varepsilon) \to \infty$, so if $N(x, \varepsilon) \cap U_\alpha = F_\alpha(\{k_\alpha^{n(x, \varepsilon)}\} \times J)$, as $\varepsilon \to 0$ along a subsequence we have $k_\alpha^{n(x, \varepsilon)} \to k_\alpha(x)$ for some $k_\alpha(x) \in K_\alpha$, which then corresponds to some $N(x) \in \Leaves \lambda$.
From the tangential $C^\infty$ convergence of $\lambda_n$ to $\lambda$, which in particular preserves all tangent vectors to $\lambda_n$ (hence preserves the normal bundle as well), we see that $\normal_{N(x, \varepsilon)}(y(x, \varepsilon)) \to \normal_{N(x)}(x)$.

So for every $x \in N$ there exists $N(x) \in \Leaves \lambda$ such that $x \in N(x)$ and $\normal_N(x) = \normal_{N(x)}(x)$.
In other words, for every $x \in N$, $N$ is tangent to $N(x)$ at $x$, and therefore we can represent $N$ as a graph
$$N \cap U_\alpha = (F_\alpha)_*(\{k = g(y): k \in K_\alpha, y \in J\})$$
where $\dif g(y) = 0$ for every $y \in J$.
We justify the use of flow box coordinates by the fact that $F_\alpha$ is tangentially $C^\infty$ smooth with respect to $\lambda$, hence preserves tangency to leaves of $\lambda$.
So $g$ is constant and hence $N = N(x)$ is actually a leaf of $\lambda$, as desired.

\subsubsection{Convergence in the measure topology}
Suppose that $\mu_n$ is transverse to $\lambda_n$.
Since $(T_{\mu_n})$ is vaguely bounded, along a subsequence, $T_{\mu_n} \to T$ for some current $T$ of locally finite mass.

Let $V$ be an open ball such that $\overline V$ is a compact subset of $M$ which does not intersect $\supp \lambda$.
By Lemma \ref{supports shrink in the limit} and the fact that $\lambda_n \to \lambda$, $V$ does not intersect $\supp \lambda_n$ for $n$ large enough, so that $\int_V \star |T_{\mu_n}| = 0$.
By (\ref{LSC}), $\int_V \star |T| = 0$, hence $\supp T \subseteq \supp \lambda$.

Since $(T_{\mu_n})$ is vaguely bounded, so is $(\mu_\alpha^n)$ for every $\alpha \in A_E$, so by taking a further subsequence, $\mu_\alpha^n \to \mu_\alpha$ vaguelyfor every $\alpha \in A_E$ and some positive Radon measures $\mu_\alpha$ (whose support is necessarily then contained in $K_\alpha$, and which is necessarily invariant under transition maps).
Taking the limit as $n \to \infty$ of the equation 
$$\int_{U_\alpha} T_{\mu_n} \wedge \varphi = \int_I \int_{\{k\} \times J} (F_\alpha^n)^* \varphi \dif \mu_\alpha^n(k),$$
and exploiting the fact that $F_\alpha^n \to F_\alpha$ in tangential $C^\infty$,
$$\int_{U_\alpha} T \wedge \varphi = \int_I \int_{\{k\} \times J} F_\alpha^* \varphi \dif \mu_\alpha(k).$$
In other words, $T$ is Ruelle-Sullivan for $(\lambda', \mu)$, where $\lambda'$ is the closure in $\lambda$ of $\supp \mu$.

It remains to show that $\lambda'$ is nonempty, or equivalently that $\mu$ is nonzero.
Since
$$\int_{\bigcup_{\alpha \in A_E} U_\alpha} \star |T_{\mu_n}| \geq \int_E \star |T_{\mu_n}| \geq \varepsilon$$
and $A_E$ is finite, a pigeonholing argument yields that, after taking a further subsequence, there exists $\varepsilon' > 0$ and $\alpha \in A_E$ such that $\int_{U_\alpha} \star |T_{\mu_n}| \geq \varepsilon'$.
By (\ref{transverse measure of an open set}) and the convergence of the flow boxes, it follows that for some $\varepsilon'' > 0$, $\mu_\alpha^n(I) \geq \varepsilon''$.
But $I$ is compact, so by the portmanteau theorem \cite[Theorem 13.16]{klenke2013probability}, $\mu_\alpha(I) \geq \varepsilon'' > 0$.

This completes the proof of Theorem \ref{compactness theorem}.

%%%%%%%%%%%%%%%%%%%%%%%%%%%%%%%%%%%%%%
\subsection{Consequences of measured convergence}\label{relationships between modes}
We now apply Theorems \ref{main thm} and \ref{compactness theorem} to explain how the different modes of convergence are related to each other.
It is clear from the definitions that flow-box convergence implies Thurston convergence.
Moreover, for $d = 2$, Thurston claimed that that measure convergence implies Thurston convergence \cite[Proposition 8.10.3]{thurston1979geometry}, though he did not explicitly justify why the limit was geodesic, or why the convergence preserves the normal vectors.
We complete the proof that measure convergence implies Thurston convergence, and show that flow-box convergence sits in the middle of the chain of implications.

\begin{lemma}\label{limits of measured geodesic lams are geodesic}
Let $d \leq 7$, let $(\lambda_n, \mu_n)$ be measured minimal laminations in $M$ of bounded curvature, and $(\lambda_n, \mu_n) \to (\lambda, \mu)$.
Then $\lambda$ is minimal.
\end{lemma}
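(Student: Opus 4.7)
Minimality is a local property, so fix any $p \in M$ and work in a ball $B \ni p$ small enough that $H^1(B, \RR) = 0$ and that the normal line bundles of all $\lambda_n$ and of $\lambda$ are trivializable over $B$. After choosing compatible trivializations, I may regard each $T_{\mu_n}|_B$ as a closed $1$-current on $B$, with $T_{\mu_n}|_B \to T_\mu|_B$ vaguely.

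\emph{Step 1 (Local primitives via Theorem \ref{main thm}).} Since $(\lambda_n)$ has bounded curvature and each $\lambda_n|_B$ is now oriented, Theorem \ref{main thm}(2)(a) says that any primitive $u_n \in BV_\loc(B)$ of $T_{\mu_n}|_B$ (which exists because $H^1(B,\RR)=0$) has locally least gradient. Inspecting the proof, which invokes Proposition \ref{minimal implies locally minimizing}, the open cover $\mathcal U$ of $B$ on whose members $u_n$ has least gradient depends only on the curvature bound of $\lambda_n$, so $\mathcal U$ may be chosen uniformly in $n$.

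\emph{Step 2 (Compactness).} Normalize each $u_n$ to have zero average on $B$. Vague convergence of $T_{\mu_n}$ gives $\sup_n |\dif u_n|(B') < \infty$ for every $B' \Subset B$, and together with the Poincar\'e inequality this yields a uniform $BV(B')$ bound on $u_n$. By $BV$ compactness, along a subsequence $u_n \to u$ in $L^1_\loc(B)$ for some $u \in BV_\loc(B)$ with $\dif u = T_\mu|_B$. Applying Proposition \ref{MirandaStability} on each $U \in \mathcal U$ gives that $u$ has locally least gradient on $B$.

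\emph{Step 3 (Back to a minimal lamination).} If $T_\mu|_B = 0$, then $B \cap \supp \lambda = \emptyset$ (as $\mu$ has full support) and there is nothing to prove. Otherwise $u$ is non-constant, so Theorem \ref{main thm}(1) produces a Lipschitz minimal lamination $\lambda^*$ on $B$, with a measured oriented structure $\mu^*$ satisfying $T_{\mu^*} = \dif u = T_\mu|_B$.

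\emph{Step 4 (Identification of $\lambda$ with $\lambda^*$).} By uniqueness of the polar decomposition applied to $T_\mu|_B = T_{\mu^*}$ (both having unit normal densities, by \S\ref{RS prelims}), $\mu|_B = \mu^*$ and $\normal_\lambda = \normal_{\lambda^*}$ on the common support $\supp \lambda \cap B = \supp \lambda^* \cap B$. Any leaf $N$ of $\lambda$ meeting $B$ is then a $C^1$ hypersurface contained in $\supp \lambda^*$ whose tangent plane at each $q$ equals that of the leaf of $\lambda^*$ through $q$. Working in a Lipschitz flow box for $\lambda^*$, where leaves are the level sets of a Lipschitz label coordinate, this pointwise tangency forces $N \cap B$ to lie in a single level set; being a hypersurface of the same dimension, it is an open subset of a leaf of $\lambda^*$, which is minimal. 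Hence $N$ is minimal.

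The main obstacle is Step 4: turning equality of Ruelle-Sullivan currents into equality of the underlying laminations. The polar-decomposition identity from \S\ref{RS prelims} handles support and normal-vector matching cleanly, but the Frobenius-type inference that a $C^1$ hypersurface pointwise tangent to $\lambda^*$ must locally coincide with a leaf uses the Lipschitz and tangentially $C^\infty$ flow-box structure supplied by Theorem \ref{regularity theorem}.
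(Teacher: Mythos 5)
Your proposal is correct and follows essentially the same route as the paper: localize via Proposition \ref{minimal implies locally minimizing}, obtain least-gradient primitives $u_n$ of $T_{\mu_n}$ from Theorem \ref{main thm}, pass to a $BV$-compact limit $u$ via Poincar\'e and Proposition \ref{MirandaStability}, and identify $\lambda|_B$ with the minimal lamination arising from $u$. The only cosmetic difference is that the paper shrinks the ball so that every leaf of every $\lambda_n$ is area-minimizing and then invokes Theorem \ref{main thm}(2)(b) to get $u_n$ of (global) least gradient on that ball, whereas you invoke (2)(a) and a uniform open cover; your Step 4 also spells out the leaf-identification that the paper states more tersely.
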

\begin{proof}
By Proposition \ref{minimal implies locally minimizing} and the curvature bound, for every $x \in M$ there exists $r > 0$ such that every leaf of every lamination $\lambda_n$ is area-minimizing in $B(x, r)$.
After shrinking $r$ if necessary, we may assume that $H^1(B(x, r), \RR) = 0$.
Then, by Theorem \ref{main thm}, the Ruelle-Sullivan currents $T_{\mu_n}$ on $B(x, r)$ are the exterior derivatives of functions $u_n$ of least gradient.
Since $u_n$ is only defined up to a constant, we impose $\int_M \star u_n = 0$, so by Poincar\'e's inequality,
$$\|u_n\|_{L^1(B)} \lesssim r\mu_n(B(x, r)) \lesssim r^d < \infty$$
for $n$ large.
In particular, $(u_n)$ is bounded in $BV$ and hence has a subsequence which converges in $L^1$ to a function $u$.
So by Proposition \ref{MirandaStability}, $u$ has least gradient and $\dif u_n \to \dif u$ vaguely.
Then $T_\mu = \dif u$, so the leaves of $\lambda|_{B(x, r)}$ are level sets of $u$.
By Theorem \ref{main thm of old paper}, the leaves of $\lambda|_{B(x, r)}$ are minimal; this is a local property, so $\lambda$ is minimal.
\end{proof}

\begin{proposition}\label{convergence of measures means Thurston convergence}
Let $d \leq 7$, let $(\lambda_n, \mu_n)$ be measured minimal laminations in $M$ of bounded curvature, and $(\lambda_n, \mu_n) \to (\lambda, \mu)$.
Then $\lambda_n \to \lambda$ in Thurston's geometric topology.
\end{proposition}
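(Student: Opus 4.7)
The plan is to combine Theorem \ref{compactness theorem} with the uniqueness part of Theorem \ref{main thm} to show that any flow-box subsequential limit of $(\lambda_n)$ contains $\lambda$ as a sublamination, from which Thurston convergence to $\lambda$ follows immediately.

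Since Thurston convergence is tested pointwise on $\supp \lambda$, it suffices to show that every subsequence of $(\lambda_n)$ admits a further sub-subsequence Thurston-converging to $\lambda$. Fix such a subsequence and relabel it as $(\lambda_n, \mu_n)$. The sequence $(\lambda_n)$ has bounded curvature, and the vague convergence $T_{\mu_n} \to T_\mu$ combined with the identity $|T_\mu| = \mu$, the full-support convention $\supp \mu = \supp \lambda$, and the portmanteau theorem applied to balls of zero $\mu$-boundary measure guarantees that leaves of $\lambda_n$ eventually enter every small ball around any $x \in \supp \lambda$. This supplies the technical hypothesis (leaves meeting a common compact set, plus mass lower bound on that set) needed to invoke Theorem \ref{compactness theorem}, possibly after localizing the argument around each $x \in \supp \lambda$. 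Passing to a further subsequence, $\lambda_n$ flow-box converges in the $C^{1-}$ and tangential $C^\infty$ topologies to some minimal lamination $\lambda^\ast$, and $(\lambda_n, \mu_n) \to (\lambda', \mu')$ for some sublamination $\lambda'$ of $\lambda^\ast$ equipped with a transverse measure $\mu'$.

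Uniqueness of vague limits forces $T_{\mu'} = T_\mu$. To identify $(\lambda', \mu') = (\lambda, \mu)$, I work on a contractible open set $V \subseteq M$: Theorem \ref{main thm}(2) gives $T_\mu|_V = \dif u$ for a function $u$ of locally least gradient, and Theorem \ref{main thm}(1) recovers the leaves of $\lambda|_V$ as the components of $\partial\{u > y\}$ and $\partial\{u < y\}$, with the transverse measure reconstructed from the monotone parts of $u$ in flow-box coordinates. The same recipe applied to $T_{\mu'}|_V = \dif u$ yields $(\lambda'|_V, \mu'|_V) = (\lambda|_V, \mu|_V)$, and gluing over a cover of $M$ by such $V$ gives $(\lambda', \mu') = (\lambda, \mu)$ globally. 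In particular, $\lambda$ is a sublamination of $\lambda^\ast$; since $\lambda_n \to \lambda^\ast$ in Thurston's topology and sublaminations of Thurston limits are themselves Thurston limits, $\lambda_n \to \lambda$ in Thurston's topology along the chosen subsequence.

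The main obstacle is the identification of the measured lamination $(\lambda', \mu')$ with $(\lambda, \mu)$: one must recover not merely the underlying lamination but also the transverse measure from the Ruelle-Sullivan current, which hinges on Theorem \ref{main thm} and the full-support convention on transverse measures. A secondary technical point is the localization required to apply Theorem \ref{compactness theorem} on a possibly noncompact $M$, where leaves of $\lambda_n$ may otherwise escape to infinity.
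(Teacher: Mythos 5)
Your approach is genuinely different from the paper's and has a real gap at the place you flag as a ``secondary technical point.'' The paper proves Thurston convergence directly and without invoking Theorem~\ref{compactness theorem} or Theorem~\ref{main thm}: lower semicontinuity of mass (\ref{LSC}) applied to $\mu(B(x,\varepsilon)) > 0$ forces $\mu_n(B(x,\varepsilon)) > 0$ for large $n$, producing points $x_n \in \supp\lambda_n$ approaching $x \in \supp\lambda$; and Theorem~\ref{regularity theorem} gives uniformly Lipschitz extensions of the normal fields $\normal_{\lambda_n}$ on $B(x,\varepsilon)$, whose $C^0$ subsequential limits are identified with $\normal_\lambda$ by combining the polar decomposition (\ref{polar ruelle sullivan}) with Lemma~\ref{vague polar convergence}. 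That is the whole proof.

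Your route through Theorem~\ref{compactness theorem} requires a compact set $E \subseteq M$ which \emph{every} leaf of \emph{every} $\lambda_n$ meets, and this is not among the hypotheses of the proposition; on noncompact $M$ it can simply fail, as leaves of $\lambda_n$ may escape to infinity. Repairing this requires localizing to a ball $B(x,r)$, discarding the leaves of $\lambda_n$ that miss a smaller $\overline{B(x,r')}$, and then verifying that the truncated Ruelle--Sullivan currents agree with $T_{\mu_n}$ on $B(x,r')$, that they remain vaguely bounded, and that the $\varepsilon$-mass lower bound on $\overline{B(x,r')}$ survives via (\ref{LSC}); none of this is carried out. Two smaller issues: the identification of $(\lambda',\mu')$ with $(\lambda,\mu)$ is not literally contained in Theorem~\ref{main thm}, which is an existence statement rather than a uniqueness statement --- the clean argument is via uniqueness of the polar decomposition of $T_\mu = T_{\mu'}$ together with Theorem~\ref{regularity theorem}, which shows the lamination is determined by its support and normal field; and invoking Theorem~\ref{main thm}(2) presupposes that $\lambda$ itself has bounded curvature, which is not among the hypotheses and must first be established (e.g.\ via the argument of Lemma~\ref{limits of measured geodesic lams are geodesic} and the curvature estimates in \S\ref{level set estimates}).
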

\begin{proof}
Let $x \in \supp \lambda$ and $\varepsilon > 0$ be small enough.
Then $\mu(B(x, \varepsilon)) > 0$, so by (\ref{LSC}), $\mu_n(B(x, \varepsilon)) > 0$ if $n$ is large enough.
Therefore, taking $x_n \in \supp \lambda_n \cap B(x, \varepsilon)$ and then taking $\varepsilon \to 0$, we find $x_n \to x$ with $x_n \in \supp \lambda_n$.
By Theorem \ref{regularity theorem}, the normal vectors $\normal_n$ of $\lambda_n$ extend to Lipschitz vector fields on all of $B(x, \varepsilon)$, where the Lipschitz constants are uniformly bounded.
So a subsequence converges to some vector field $\normal'$ in $C^0$.
By Lemma \ref{limits of measured geodesic lams are geodesic}, $\lambda$ is minimal, so by Theorem \ref{regularity theorem}, we again obtain a Lipschitz extension $\normal$ of the normal vector of $\lambda$.
By (\ref{polar ruelle sullivan}) and Lemma \ref{vague polar convergence}, $\normal = \normal'$, so $\normal_n(x_n) \to \normal(x)$.
\end{proof}

\begin{proposition}\label{convergence of traansverse measures means flow box convergence}
Let $d \leq 7$, $(\lambda_n, \mu_n)$ be measured minimal laminations in $M$ of bounded curvature, and $(\lambda_n, \mu_n) \to (\lambda, \mu)$.
Then $\lambda_n \to \lambda$ in the $C^{1-}$ and tangentially $C^\infty$ flow box topology, and in particular in the Thurston topology.
\end{proposition}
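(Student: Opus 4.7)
The plan is to localize the compactness argument in Theorem~\ref{compactness theorem}, pinning down the Thurston convergence via the preceding proposition and exploiting the fact that a laminar flow box for any lamination is automatically one for every sublamination. I will argue subsequentially: at each $P \in \supp \lambda$ I extract a convergent sequence of local flow boxes for $\lambda_n$, show that the limit chart serves a possibly larger local lamination $\lambda^*$ of which $\lambda$ is a sublamination, and then restrict it to a chart for $\lambda$.

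First, I would fix $P \in \supp \lambda$. By the preceding proposition $\lambda_n \to \lambda$ in Thurston's geometric topology, so I can pick $P_n \in \supp \lambda_n$ with $P_n \to P$. The bounded-curvature hypothesis together with Theorem~\ref{regularity theorem}, applied to $\lambda_n$ at $P_n$, yields flow boxes $F_n\colon I \times J \to M$ for $\lambda_n$ with $F_n(0,0) = P_n$, image containing $B(P_n, r)$, Lipschitz constants at most $L$, and uniform tangential $C^\infty$ bounds, where $r, L$ depend only on the curvature and injectivity-radius data. Arzel\`a--Ascoli and a diagonal extraction then produce a subsequence along which $F_n \to F$ and $F_n^{-1} \to F^{-1}$ in $C^{1-}$ and tangential $C^\infty$, and along which the label sets $K_n$ Hausdorff-converge on $I$ to some compact $K$. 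Mirroring the proof of Theorem~\ref{compactness theorem}, the pair $(F, K)$ is a Lipschitz laminar flow box for a minimal lamination $\lambda^*$ on the image of $F$.

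The crucial step is to identify $\lambda|_{F(I \times J)}$ as a sublamination of $\lambda^*$. This is a direct consequence of Thurston convergence of supports: inside the flow boxes we have $\supp \lambda_n \cap F_n(I \times J) = F_n(K_n \times J)$, which Hausdorff-converges to $F(K \times J) = \supp \lambda^* \cap F(I \times J)$, so by (\ref{supports shrink in the limit}) every point of $\supp \lambda \cap F(I \times J)$ lies in $F(K \times J)$. Each leaf of $\lambda$ meeting the flow box is then of the form $F(\{k\}\times J)$ for some $k \in K$, hence coincides with a leaf of $\lambda^*$. Consequently $F$ is a laminar flow box for $\lambda|_{F(I \times J)}$ as well, with the restricted label set $F^{-1}(\supp \lambda) \cap (I \times \{0\}) \subseteq K$. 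Covering $\supp \lambda$ by countably many such neighborhoods and gluing with trivial charts away from $\supp \lambda$ yields the required atlas for $\lambda$ realized as a limit of atlases for $\lambda_n$.

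The main obstacle is the potential strict containment $\lambda \subsetneq \lambda^*$ locally, which can occur because the vague limit $\mu$ need not have full support on the maximal lamination produced by the compactness argument. The observation that makes this harmless is that flow-box convergence requires only convergence of the chart \emph{maps}, not of their label sets; the same chart map that witnesses flow-box convergence to $\lambda^*$ witnesses flow-box convergence to any sublamination of $\lambda^*$, with the label set adjusted accordingly.
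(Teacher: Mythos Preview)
Your approach differs from the paper's: rather than discarding leaves of $\lambda_n$ so that $\lambda$ becomes a \emph{maximal} Thurston limit and then invoking Theorem~\ref{compactness theorem} together with the subsequence principle, you work locally and observe that a flow-box chart for the maximal limit $\lambda^*$ restricts to one for any sublamination. That is a sound idea, and the observation that flow-box convergence constrains only the chart maps (not the label sets) is correct.

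There is, however, a genuine gap in the step where you identify $\lambda$ as a sublamination of $\lambda^*$. From the support containment $\supp\lambda \subseteq F(K\times J)$ alone it does \emph{not} follow that each leaf of $\lambda$ in the flow box has the form $F(\{k\}\times J)$: if $K$ contains an interval, so that $\lambda^*$ is a foliation there, its support is open, and a minimal hypersurface contained in an open set need not be tangent to a given minimal foliation of it. What you are missing is the \emph{normal-vector} half of Thurston convergence. Since $\lambda_n\to\lambda$ in Thurston's sense and $\lambda_n\to\lambda^*$ in the tangential $C^\infty$ flow-box sense, the normals of $\lambda_n$ converge to those of both $\lambda$ and $\lambda^*$, forcing $\normal_\lambda = \normal_{\lambda^*}$ on $\supp\lambda$. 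Then a leaf $N$ of $\lambda$ is everywhere tangent to the leaves of $\lambda^*$, so in flow-box coordinates the label function $k|_N$ has vanishing differential and is constant. This is precisely the argument carried out in the maximality step of the proof of Theorem~\ref{compactness theorem}, and you need to reproduce it here; citing only (\ref{supports shrink in the limit}) does not suffice.
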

\begin{proof}
By Proposition \ref{convergence of measures means Thurston convergence}, $\lambda_n \to \lambda$ in Thurston's geometric topology.
After discarding some leaves of $\lambda_n$ we may assume that $\lambda$ is a maximal limit for the Thurston topology.
Moreover, every subsequence $(\lambda_{n_k})$ has a further subsequence $(\lambda_{n_{k_\ell}})$ which converges to some maximal limit $\tilde \lambda$ in the $C^{1-}$ flow box topology by Theorem \ref{compactness theorem}.
But convergence in the flow box topology implies convergence in Thurston's topology, so $\tilde \lambda = \lambda$.
Since $(\lambda_{n_k})$ was arbitrary, it follows that $\lambda_n \to \lambda$ in the $C^{1-}$ flow box topology.
\end{proof}

%%%%%%%%%%%%%%%%%%%%%%%%%%
\subsection{Laminations in \texorpdfstring{$\Hyp^2$}{the hyperbolic plane}}\label{hyperbolic equivalence}
We now show that convergence of measured geodesic laminations in the hyperbolic plane $\Hyp^2$ as we have defined it is equivalent to convergence of measured geodesic laminations as defined by Thurston \cite[Chapter 8]{thurston1979geometry}.
Thus our results are compatible with the topology literature.

Recall that the space $G(\Hyp^2)$ of complete geodesics in $\Hyp^2$ with its $C^\infty$ topology is homeomorphic to the open M\"obius strip.
A geodesic lamination $\lambda$ is thus identified with a closed set $\hat \lambda \subset G(\Hyp^2)$ of nonintersecting geodesics \cite[\S8.5]{thurston1979geometry}.
By a \dfn{transverse measure} to $\lambda$, Thurston means a Radon measure $\hat \mu$ on $G(\Hyp^2)$ with support $\hat \lambda$.
In particular, he defines the convergence $(\lambda_n, \mu_n) \to (\lambda, \mu)$ to mean that $\hat \mu_n \to \hat \mu$ vaguely on $G(\Hyp^2)$ \cite[\S8.10]{thurston1979geometry}.
We show that a transverse measure $\mu$, in the sense of Definition \ref{transverse measure definition}, induces a transverse measure $\hat \mu$ in Thurston's sense.
Then we shall show that the two notions of convergence are also equivalent.

\begin{lemma}
Given $F \in C^0_\cpt(G(\Hyp^2))$ and a geodesic lamination $\lambda$ in $\Hyp^2$, there exists a $C^0_\cpt$ $1$-form $\varphi$ on $\Hyp^2$ such that for every $\gamma \in \hat \lambda$,
\begin{equation}\label{extend boundary function to interior}
F(\gamma) = \int_\gamma \varphi.
\end{equation}
Furthermore, $\varphi$ can be chosen to depend continuously in $C^0$ on $F$ in $C^0$ and $\lambda$ in the tangentially $C^1$ flow box topology.
\end{lemma}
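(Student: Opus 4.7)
The strategy is to build $\varphi$ in the factored form
\[
\varphi = \tilde h \cdot \chi \cdot T^\flat,
\]
where $T$ is a Lipschitz tangent vector field to $\lambda$, $\chi$ is a fixed compactly supported cutoff, and $\tilde h$ encodes the desired integral values. First, since $\supp F$ is compact in $G(\Hyp^2) \cong ((\partial \Hyp^2)^2 \setminus \Delta)/\ZZ_2$, the endpoints of geodesics in $\supp F$ avoid a neighborhood of the diagonal, so there is a compact $E \subset \Hyp^2$ and $\ell > 0$ such that every $\gamma \in \supp F$ meets $E$ in a segment of length at least $\ell$. Choose $\chi \in C^0_\cpt(\Hyp^2)$ with $\chi \geq 0$ and $\chi \equiv 1$ on $E$; then $W(\gamma) := \int_\gamma \chi \, ds$ is a continuous, strictly positive function on $\supp F$.

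Next, by Theorem \ref{regularity theorem}(2), the tangent line bundle to $\lambda$ extends to a Lipschitz line bundle on a neighborhood of $\supp \lambda$, which we orient locally in each flow box to obtain a unit section $T$ (globally $T$ is defined up to sign, which is compatible with the sign ambiguity of $\int_\gamma \varphi$ for $\gamma$ unoriented). Then $T^\flat|_\gamma = ds$ along every leaf $\gamma$ of $\lambda$. Define
\[
h(p) := \frac{F(\gamma_p)}{W(\gamma_p)} \qquad \text{for } p \in \supp \lambda,
\]
where $\gamma_p$ is the unique leaf through $p$; this is continuous on $\supp \lambda$. In each flow box $F_\alpha : I \times J \to U_\alpha$, the function $k \mapsto F(\gamma_k)/W(\gamma_k)$ is continuous on $K_\alpha$, so Tietze extension yields a continuous $H_\alpha : I \to \RR$. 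Define $\tilde h$ on $U_\alpha$ to be $H_\alpha \circ \kappa_\alpha$, where $\kappa_\alpha : U_\alpha \to I$ is the label projection, and glue the $\tilde h_\alpha$'s using a partition of unity subordinate to $\{U_\alpha\}$. On $\supp \lambda$, all pieces agree and $\tilde h = h$; moreover $\tilde h$ is constant along leaves.

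Setting $\varphi := \tilde h \, \chi \, T^\flat$, which has compact support inside $\supp \chi$, we compute for $\gamma \in \hat \lambda$:
\[
\int_\gamma \varphi = \int_\gamma \tilde h \, \chi \, T^\flat = h(\gamma) \int_\gamma \chi \, ds = \frac{F(\gamma)}{W(\gamma)} \cdot W(\gamma) = F(\gamma),
\]
using that $\tilde h$ is constant on $\gamma$ with value $h(\gamma)$. For the continuity statement, $\chi$ is fixed independently of $F$ and $\lambda$; the tangent field $T$ and the flow boxes $F_\alpha$ depend continuously on $\lambda$ in the tangentially $C^1$ topology (with $K_\alpha$ varying in the Hausdorff distance by Theorem \ref{regularity theorem}); hence $W$ depends continuously on $\lambda$, and $h$ on both $F$ and $\lambda$. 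The final assembly of $\tilde h$ and $\varphi$ is then continuous.

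\textbf{Main obstacle.} The delicate point is making the extension step produce $\tilde h$ in a way that varies continuously with $\lambda$, since the label sets $K_\alpha$ (on which $h$ is prescribed) themselves vary with $\lambda$. This can be handled by using a canonical Tietze-type extension formula that is continuous in the pair (datum on $K_\alpha$, set $K_\alpha$) with respect to uniform convergence and Hausdorff distance respectively, or equivalently by smoothing $h \circ \kappa_\alpha$ along the transverse $k$-direction after harmonically extending in a fixed way. Combined with the partition-of-unity gluing, this yields the required continuity.
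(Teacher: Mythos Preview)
Your construction is essentially the paper's: both set $\varphi = (\text{leaf-constant scalar})\cdot(\text{compactly supported weight})\cdot(\text{tangent $1$-form})$ on $\supp\lambda$ and then extend continuously to $\Hyp^2$, the only difference being that the paper normalizes the weight $w$ so that $\int_\gamma w\,\dif\mathcal H^1 = 1$ on every leaf, whereas you keep the cutoff $\chi$ fixed and push the normalization into the scalar $h = F/W$. One small caveat: your $h(p) = F(\gamma_p)/W(\gamma_p)$ is only well-defined on leaves with $\gamma_p \in \supp F$ (elsewhere $W$ may vanish), but since $F=0$ there and $W \geq \ell$ uniformly on $\supp F$, declaring $h=0$ on the remaining leaves yields a continuous function and the computation goes through unchanged.
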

\begin{proof}
Since $\hat K := \supp F$ is compact, there exists a compact set $K \Subset \Hyp^2$ such that every $\gamma \in \hat K$ intersects $K$.
We then let $B$ be an open ball containing $K$, and introduce a weight function $w \in C^0_\cpt(B, \RR_+)$ by first imposing that on a single leaf $\gamma \in \hat \lambda$, $\int_\gamma w|_\gamma \dif \mathcal H^1 = 1$ and $w|_\gamma = 0$ near $\partial B$.
By imposing that $w$ is constant along suitably chosen curves transverse to $\lambda$, we can impose that the same properties hold for every leaf $\gamma$.

We then define for every $\gamma \in \hat \lambda$ and $x \in \gamma$,
$$\varphi(x) := F(\gamma) w(x) \star \normal_\lambda^\flat(x).$$
Then $\varphi$ is continuous on $\supp \lambda$, since $\normal_\lambda$ is continuous by Theorem \ref{regularity theorem}.
We extend $\varphi$ by Urysohn's lemma to a $C^0_\cpt$ $1$-form on $\Hyp^2$.
By construction we have (\ref{extend boundary function to interior}) and continuous dependence on $F$.
The continuous dependence on $\lambda$ in the $C^1$ flow box topology follows, because this implies convergence of $\normal_\lambda$ in $C^0$, and implies local Hausdorff convergence of the leaves, which in turn implies convergence of the weight $w$.
\end{proof}

\begin{lemma}\label{existence of boundary measure}
Let $(\lambda, \mu)$ be a measured geodesic lamination in $\Hyp^2$.
Then there is a unique Radon measure $\hat \mu$ on $G(\Hyp^2)$ such that for every compactly supported $1$-form $\varphi$ on $\Hyp^2$,
\begin{equation}\label{Radon measure on boundary}
\int_{\Hyp^2} T_\mu \wedge \varphi = \int_{G(\Hyp^2)} \int_\gamma \varphi \dif \hat \mu(\gamma).
\end{equation}
Furthermore, $\supp \hat \mu = \hat \lambda$.
\end{lemma}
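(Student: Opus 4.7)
The plan is to construct $\hat\mu$ locally by pushing forward the transverse measures $\mu_\alpha$ to $G(\Hyp^2)$ via the map sending each plaque label to the complete geodesic extending that plaque, and then to glue these local pushforwards using the cocycle relations. The previous lemma, which realizes arbitrary continuous functions on $\hat\lambda$ as $\gamma \mapsto \int_\gamma \varphi$, will then supply both the verification of (\ref{Radon measure on boundary}) and the uniqueness of $\hat\mu$.

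In detail, fix a laminar atlas $(F_\alpha, K_\alpha)$ for $\lambda$ and define
\[
\Phi_\alpha: K_\alpha \to G(\Hyp^2), \qquad \Phi_\alpha(k) = \gamma_k^\alpha,
\]
where $\gamma_k^\alpha$ is the unique complete geodesic of $\Hyp^2$ extending the plaque $F_\alpha(\{k\}\times J)$. Tangential $C^\infty$-regularity of the atlas (Theorem \ref{regularity theorem}) makes $\Phi_\alpha$ continuous, and the disjointness of leaves of $\lambda$ (together with the fact that distinct plaques determine distinct tangent directions) makes it injective. Set $\hat\mu_\alpha := (\Phi_\alpha)_*\mu_\alpha$. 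The cocycle relation $\psi_{\alpha\beta}^*\mu_\beta = \mu_\alpha$ together with the identity $\Phi_\beta \circ \psi_{\alpha\beta} = \Phi_\alpha$ (both sides send a label to its geodesic) ensures that $\hat\mu_\alpha$ and $\hat\mu_\beta$ agree on $\Phi_\alpha(K_\alpha) \cap \Phi_\beta(K_\beta)$, so they glue to a Radon measure $\hat\mu$ on $\hat\lambda$, which we extend by zero to $G(\Hyp^2)$.

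To verify (\ref{Radon measure on boundary}), expand the LHS using (\ref{RS current}): for $\gamma = \gamma_k^\alpha$ and $\chi_\alpha\varphi$ supported in $U_\alpha$ one has $\int_{\{k\}\times J} (F_\alpha^{-1})^*(\chi_\alpha\varphi) = \int_\gamma \chi_\alpha\varphi$, so a change of variables by $\Phi_\alpha$ converts the $k$-integral into an integral against $\hat\mu$. Summing over $\alpha$ and using $\sum_\alpha \chi_\alpha \equiv 1$ on $\supp\varphi$ yields the RHS; the exchange of sum and integral is legal because only finitely many charts meet $\supp\varphi$. The identity $\supp\hat\mu = \hat\lambda$ is immediate from $\supp\mu_\alpha = K_\alpha$ and the fact that $\Phi_\alpha$ is a homeomorphism onto its image. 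For uniqueness, if $\hat\mu'$ is another such measure, the preceding lemma lets us realize every $F \in C^0_{\rm cpt}(G(\Hyp^2))$ as $\gamma \mapsto \int_\gamma \varphi_F$ on $\hat\lambda$; then $\int F\,d\hat\mu = \int_{\Hyp^2} T_\mu\wedge \varphi_F = \int F\,d\hat\mu'$ whenever $\supp F \subset \hat\lambda$, pinning down $\hat\mu|_{\hat\lambda}$, while the support property determines both measures on the complement.

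The main technical obstacle is the bookkeeping in the change-of-variables step: one must check that the local expressions on plaques, once pushed forward by $\Phi_\alpha$, genuinely assemble into a single integral over geodesics weighted by $\hat\mu$, and in particular that the partition-of-unity chopping of $\varphi$ is undone cleanly by summing over $\alpha$. This requires verifying that for each geodesic $\gamma \in \hat\lambda$, the contributions $\int_\gamma \chi_\alpha\varphi$ over the finitely many charts $U_\alpha$ meeting $\gamma \cap \supp\varphi$ add up to $\int_\gamma \varphi$, which is routine but must be carried out with some care because the charts $U_\alpha$ cover $\gamma$ only on a compact piece.
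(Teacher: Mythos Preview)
Your approach differs from the paper's: the paper never constructs $\hat\mu$ explicitly but instead \emph{defines} the functional $F\mapsto \int F\,d\hat\mu := \int_{\Hyp^2} T_\mu\wedge\varphi$ (for any $\varphi$ realizing $F$ on $\hat\lambda$ via the preceding lemma), checks it is well-defined, linear, and continuous, and then appeals to Riesz representation. You instead build $\hat\mu$ as a glued pushforward of the local transverse measures. Your route has the virtue of making positivity of $\hat\mu$ and the identity $\supp\hat\mu=\hat\lambda$ transparent; the paper's route sidesteps any gluing.

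There is, however, a real gap in your gluing step. You assert that the cocycle relation $\psi_{\alpha\beta}^*\mu_\beta=\mu_\alpha$ together with $\Phi_\beta\circ\psi_{\alpha\beta}=\Phi_\alpha$ forces $\hat\mu_\alpha=\hat\mu_\beta$ on $\Phi_\alpha(K_\alpha)\cap\Phi_\beta(K_\beta)$. But $\psi_{\alpha\beta}$ is only defined on labels of plaques lying in $U_\alpha\cap U_\beta$, so your argument only yields agreement on the set of geodesics passing through $U_\alpha\cap U_\beta$. The set $\Phi_\alpha(K_\alpha)\cap\Phi_\beta(K_\beta)$ consists of all geodesics meeting both $U_\alpha$ and $U_\beta$, which is strictly larger whenever $U_\alpha$ and $U_\beta$ are disjoint flow boxes straddling a common leaf. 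What is actually needed is holonomy invariance of the transverse measure along each leaf: given $\gamma$ through $U_\alpha$ and $U_\beta$, cover the arc of $\gamma$ between them by a finite chain $U_\alpha=U_{\alpha_0},U_{\alpha_1},\dots,U_{\alpha_m}=U_\beta$ of flow boxes with $U_{\alpha_{i}}\cap U_{\alpha_{i+1}}\ni$ a point of $\gamma$, and compose the cocycle relations along the chain. This is the standard mechanism by which transverse measures induce measures on the leaf space, but it is precisely the step your write-up omits, and without it the local pushforwards $\hat\mu_\alpha$ do not \emph{a priori} assemble into a single measure. Once this is supplied your verification of (\ref{Radon measure on boundary}) and the partition-of-unity bookkeeping go through as you describe.

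A smaller issue: in your uniqueness argument, the equality $\int_{\Hyp^2} T_\mu\wedge\varphi_F = \int F\,d\hat\mu'$ uses (\ref{Radon measure on boundary}) to get $\int_{G(\Hyp^2)}\bigl(\int_\gamma\varphi_F\bigr)\,d\hat\mu'(\gamma)$ on the right, and you then replace $\int_\gamma\varphi_F$ by $F(\gamma)$. The preceding lemma only guarantees this identity for $\gamma\in\hat\lambda$, so you are implicitly assuming $\supp\hat\mu'\subseteq\hat\lambda$, which is not part of the hypothesis on $\hat\mu'$. You should first argue that any $\hat\mu'$ satisfying (\ref{Radon measure on boundary}) is supported on $\hat\lambda$ (e.g.\ by testing against $\varphi$ supported off $\supp\lambda$ and noting that the functions $\gamma\mapsto\int_\gamma\varphi$ separate geodesics).
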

\begin{proof}
The condition (\ref{Radon measure on boundary}) is equivalent to
$$\int_{G(\Hyp^2)} F \dif \hat \mu = \int_{\Hyp^2} T_\mu \wedge \varphi,$$
where $\varphi$ is any $C^0_\cpt$ $1$-form satisfying (\ref{extend boundary function to interior}).
The choice of $\varphi$ is irrelevant: if we chose a different $1$-form $\psi$ satisfying (\ref{extend boundary function to interior}), then we would have $\int_\gamma \varphi = \int_\gamma \psi$ for every $\gamma \in \hat \lambda$, and then a computation using partitions of unity would show that $\int_{\Hyp^2} T_\mu \wedge \varphi = \int_{\Hyp^2} T_\mu \wedge \psi$.
The continuous dependence of $\varphi$ on $F$ implies that $F \mapsto \int F \dif \hat \mu$ is continuous, and it is clear that $F \mapsto \varphi$ can be chosen to be a linear map.
Therefore $\hat \mu$ is a well-defined Radon measure.
\end{proof}

\begin{proposition}
Let $(\lambda_n, \mu_n)$ be a sequence of measured geodesic laminations in $\Hyp^2$, and $(\lambda, \mu)$ a geodesic lamination.
Then $(\lambda_n, \mu_n) \to (\lambda, \mu)$ iff $\hat \mu_n \to \hat \mu$ vaguely on $G(\Hyp^2)$.
\end{proposition}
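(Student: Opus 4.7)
The plan is to use the identity (\ref{Radon measure on boundary}) from Lemma \ref{existence of boundary measure} as the bridge between Ruelle-Sullivan currents on $\Hyp^2$ and Radon measures on $G(\Hyp^2)$. I would handle the two directions separately, with the reverse direction being essentially immediate and the forward direction requiring the full machinery developed earlier in the paper.

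For the reverse direction, assume $\hat\mu_n \to \hat\mu$ vaguely on $G(\Hyp^2)$. Given any compactly supported continuous $1$-form $\psi$ on $\Hyp^2$, I would define $F_\psi(\gamma) := \int_\gamma \psi$ on $G(\Hyp^2)$. First I would check that $F_\psi \in C^0_\cpt(G(\Hyp^2))$: continuity is standard because the integration is over a compact portion of $\gamma$, and $F_\psi$ vanishes on any geodesic disjoint from $\supp\psi$, while the set of geodesics meeting the compact set $\supp\psi$ is compact in the M\"obius-strip topology of $G(\Hyp^2)$. Taking $\varphi = \psi$ in (\ref{Radon measure on boundary}) for both $\hat\mu_n$ and $\hat\mu$ then directly converts the hypothesis into $\int T_{\mu_n} \wedge \psi \to \int T_\mu \wedge \psi$, which is the vague convergence $(\lambda_n,\mu_n) \to (\lambda,\mu)$.

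For the forward direction, assume $(\lambda_n,\mu_n) \to (\lambda,\mu)$ and fix $F \in C^0_\cpt(G(\Hyp^2))$. The key initial step is to invoke Proposition \ref{convergence of traansverse measures means flow box convergence}, whose hypotheses are automatic here (since $d=2 \leq 7$ and geodesics in $\Hyp^2$ have vanishing second fundamental form, so bounded curvature is trivial), to upgrade the measure convergence to flow box convergence $\lambda_n \to \lambda$ in the tangentially $C^\infty$, hence $C^1$, topology. This allows me to apply the lemma preceding Lemma \ref{existence of boundary measure} to produce $1$-forms $\varphi_n$ and $\varphi$ realizing $F$ along the leaves of $\lambda_n$ and $\lambda$ respectively, all supported in a common compact set depending only on $\supp F$, with $\varphi_n \to \varphi$ in $C^0$. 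I would then split
\[
\int F \dif\hat\mu_n - \int F \dif\hat\mu = \int T_{\mu_n} \wedge (\varphi_n - \varphi) + \int (T_{\mu_n} - T_\mu) \wedge \varphi
\]
and argue that both terms tend to zero: the second by the vague convergence $T_{\mu_n} \to T_\mu$ applied to the fixed form $\varphi$, and the first by combining $\|\varphi_n - \varphi\|_{C^0} \to 0$ with the uniform local mass bound $\sup_n \int_K \star|T_{\mu_n}| < \infty$ on any fixed compact $K$, which is an immediate consequence of vague convergence via Banach-Steinhaus.

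The main obstacle is the forward direction, because (\ref{Radon measure on boundary}) only relates $T_{\mu_n}$ to $\hat\mu_n$ through a form that realizes $F$ along the leaves of the varying lamination $\lambda_n$ rather than along the fixed limit $\lambda$; so one is forced to let the test form $\varphi_n$ vary with $n$ and control the discrepancy $\varphi_n - \varphi$. This is precisely where the flow-box convergence from Proposition \ref{convergence of traansverse measures means flow box convergence} becomes indispensable, and through it the entire chain of results from Theorems \ref{regularity theorem}, \ref{main thm}, and \ref{compactness theorem} silently enters the argument.
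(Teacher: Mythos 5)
Your proof is correct and follows essentially the same route as the paper: both directions hinge on equation (\ref{Radon measure on boundary}), and the forward direction upgrades measure convergence to flow-box convergence via Proposition \ref{convergence of traansverse measures means flow box convergence} in order to produce $\varphi_n \to \varphi$ in $C^0$ from the lemma preceding Lemma \ref{existence of boundary measure}. The only difference is that you make explicit two small points the paper leaves implicit, namely that $F_\psi \in C^0_\cpt(G(\Hyp^2))$ and that $(T_{\mu_n})$ is vaguely bounded by Banach--Steinhaus, which is the right justification for passing to the limit in $\int T_{\mu_n} \wedge (\varphi_n - \varphi)$.
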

\begin{proof}
Let $F \in C^0_\cpt(G(\Hyp^2))$.
We first observe that for any measured geodesic lamination $(\kappa, \nu)$, if we choose $\psi$ to be a $1$-form extending $F$ with respect to $\kappa$ as in (\ref{extend boundary function to interior}), then 
\begin{equation}\label{convergence of boundary measures is convergence of ruelle-sullivan}
\int_{G(\Hyp^2)} F \dif \hat \nu = \int_{G(\Hyp^2)} \int_\gamma \psi \dif \hat \nu(\gamma) = \int_{\Hyp^2} T_\nu \wedge \psi.
\end{equation}
Choose $\varphi_n, \varphi$ to be $1$-forms extending $F$ with respect to $\lambda_n, \lambda$ as in (\ref{extend boundary function to interior}).
If $(\lambda_n, \mu_n) \to (\lambda, \mu)$, then $\lambda_n \to \lambda$ in the tangentially $C^1$ flow box topology by Proposition \ref{convergence of traansverse measures means flow box convergence}, so we may assume $\varphi_n \to \varphi$.
Using the convergences $\varphi_n \to \varphi$ and $T_{\mu_n} \to T_\mu$, and (\ref{convergence of boundary measures is convergence of ruelle-sullivan}), we compute 
\begin{align*}
\lim_{n \to \infty} \int_{G(\Hyp^2)} F \dif \hat \mu_n
&= \lim_{n \to \infty} \int_{\Hyp^2} T_{\mu_n} \wedge \varphi_n 
= \int_{\Hyp^2} T_\mu \wedge \varphi 
= \int_{G(\Hyp^2)} F \dif \hat \mu.
\end{align*}

Conversely, if $\hat \mu_n \to \hat \mu$, then we fix a $C^0_\cpt$ $1$-form $\varphi$ on $\Hyp^2$ and define $F(\gamma) := \int_\gamma \varphi$.
Then by (\ref{convergence of boundary measures is convergence of ruelle-sullivan}),
\begin{align*}
\lim_{n \to \infty} \int_{\Hyp^2} T_{\mu_n} \wedge \varphi
&= \lim_{n \to \infty} \int_{G(\Hyp^2)} F \dif \mu_n
= \int_{G(\Hyp^2)} F \dif \mu 
= \int_{\Hyp^2} T_\mu \wedge \varphi. \qedhere
\end{align*}
\end{proof}

%%%%%%%%%%%%%%%%%%%%%%
\appendix 
\section{Geometric measure theory}\label{boundary appendix}
\subsection{Radon measures}\label{portmanteau appendix}
Let $X$ be a metrizable space, and let $C_\cpt(X)$ be the space of compactly supported continuous functions $f: X \to \RR$.
Its dual $C_\cpt(X)'$ is canonically isomorphic to the space of signed Radon measures on $X$, where the bilinear pairing is given by integration.
The weakstar topology on $C_\cpt(X)'$ is known as the \dfn{vague topology} \cite[Chapter III, \S1.9]{bourbaki2003integration}.
Unpacking the definitions, a sequence $(\mu_n)$ of signed Radon measures converges vaguely to $\mu$ iff for every $f \in C_\cpt(X)$,
$$\lim_{n \to \infty} \int_X f \dif \mu_n = \int_X f \dif \mu.$$

Now let $X = M$ be a manifold, and consider instead the space $C_\cpt(M, \Omega^{d - \ell})$ of compactly supported continuous $(d - \ell)$-forms.
An $\ell$-\dfn{current of locally finite mass} (which we will simply call an \dfn{$\ell$-current}) is an element of the dual space $C_\cpt(M, \Omega^{d - \ell})'$ \cite{simon1983GMT}.
Notice that our convention is the algebraic-geometric convention: an $\ell$-current generalizes an $\ell$-form or an $\ell$-dimensional submanifold.
This is not the geometric-measure-theoretic convention.

We denote the pairing of a $d - \ell$-current $T$ and an $\ell$-form $\varphi$ by $\int_M T \wedge \varphi$; this defines the vague topology on the space of currents.
Suppose that $M$ is Riemannian.
Then, to any $d - \ell$-current $T$ we may associate a positive Radon measure, its \dfn{mass measure} $\star |T|$, which satisfies for any function $f$,
$$\int_M f \star |T| := \sup_{|\varphi| \leq |f|} \int_M T \wedge \varphi,$$
and a $|T|$-measurable $d - \ell$-form $\psi$, the \dfn{polar part} \cite[Theorem 4.14]{simon1983GMT}, which satisfies $T = \psi |T|$, $|T|$-almost everywhere.

A set $\mathscr S$ of currents is \dfn{vaguely bounded} if for every precompact domain $U \Subset M$ there exists $C_U > 0$ such that for every $T \in \mathscr S$, $\int_U \star |T| \leq C_U$.
One can show using \cite[Chapter III, Proposition 15]{bourbaki2003integration} that $\mathscr S$ is vaguely bounded iff $\mathscr S$ is vaguely precompact.

The mapping $T \mapsto \int_M \star |T|$ is not vaguely continuous.
However, it is vaguely lower-semicontinuous: if $U$ is an open set and $T_n \to T$ vaguely on $M$, then by an easy generalization of \cite[Theorem 1.9]{Giusti77},
\begin{equation}\label{LSC}
\int_U \star |T| \leq \liminf_{n \to \infty} \int_U \star |T_n|.
\end{equation}
We also have a criterion for the polar part to be vaguely continuous:

\begin{lemma}\label{vague polar convergence}
Assume that $\psi_n, \psi, \psi'$ are continuous unit $d - \ell$-forms on $M$, such that $\psi_n \to \psi'$ in $C^0$.
Consider currents $T_n := \psi_n \mu_n$ and $T := \psi \mu$, where $\mu_n, \mu$ are positive Radon measures.
If $T_n \to T$ vaguely, and $x \in \supp T$, then $\psi(x) = \psi'(x)$.
\end{lemma}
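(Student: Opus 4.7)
The plan is to exploit a varying test form that inverts the polar direction $\psi_n$, combined with the lower semicontinuity \eqref{LSC} of the mass. The key observation is that pairing $T_n$ against a scalar multiple of $\star \psi_n$ recovers the measure $\mu_n$.

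To carry this out, I fix a nonnegative $f \in C_\cpt(M, \RR)$ and consider the $\ell$-forms $\varphi_n := f \star \psi_n$ and $\varphi := f \star \psi'$, both supported in $\supp f$ and satisfying $\varphi_n \to \varphi$ in $C^0$. Since $\psi_n$ is a unit $(d-\ell)$-form, $\psi_n \wedge \star \psi_n = \star 1$, so the pairing collapses to $\int_M T_n \wedge \varphi_n = \int_M f \, d\mu_n$; likewise $\int_M T \wedge \varphi = \int_M \langle \psi, \psi'\rangle f \, d\mu$ using $\alpha \wedge \star \beta = \langle \alpha, \beta\rangle \star 1$. Vague convergence $T_n \to T$ implies vague boundedness, and hence a uniform bound on $|T_n|(\supp f)$, so the error $\int_M T_n \wedge (\varphi_n - \varphi)$ tends to zero; combining with $\int_M T_n \wedge \varphi \to \int_M T \wedge \varphi$ yields
\[
\lim_{n \to \infty} \int_M f \, d\mu_n = \int_M \langle \psi, \psi'\rangle f \, d\mu.
\]

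Next I would invoke the lower semicontinuity of mass \eqref{LSC}: for $f \ge 0$, $\int f\, d\mu \le \liminf_n \int f\, d\mu_n$. Substituting the identity above rearranges to
\[
\int_M \bigl(1 - \langle \psi, \psi'\rangle\bigr) f \, d\mu \le 0.
\]
Cauchy--Schwarz bounds the integrand below by $0$ since $|\psi| = |\psi'| = 1$, so letting $f$ range over $C_\cpt(M, \RR_+)$ forces $\langle \psi, \psi'\rangle = 1$ $\mu$-almost everywhere, and the equality case of Cauchy--Schwarz then gives $\psi = \psi'$ $\mu$-a.e.

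Finally, continuity of $\psi$ and $\psi'$ makes $\{\psi = \psi'\}$ closed, and being $\mu$-conull it must contain $\supp \mu = \supp T$; in particular it contains the chosen $x$. I expect the main technical hurdle to be the passage to the limit $\int_M T_n \wedge \varphi_n \to \int_M T \wedge \varphi$ with \emph{varying} test forms, which hinges on the uniform local mass bound that is automatic for vaguely convergent sequences.
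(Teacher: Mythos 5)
Your proof is correct and takes a genuinely different route in its second half. Both you and the paper first establish the vague convergence $\mu_n \to \langle \psi, \psi'\rangle\mu$, though the mechanics differ: the paper introduces the auxiliary currents $T_n' := \psi'\mu_n$, shows $T_n' \to T$ vaguely by bounding $\int_M (T_n - T_n')\wedge\varphi$ via $\|\psi_n - \psi'\|_{C^0}$, and then pairs $T_n'$ against $f\star\psi'$; you pair $T_n$ directly against the \emph{varying} test forms $f\star\psi_n$ and absorb the error with the uniform local mass bound. These are the same estimate in slightly different clothing.

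The proofs genuinely diverge in extracting $\langle\psi,\psi'\rangle = 1$ from $\mu_n \to \langle\psi,\psi'\rangle\mu$. You invoke the lower semicontinuity of mass \eqref{LSC} to get $\int f\,\dif\mu \le \int \langle\psi,\psi'\rangle f\,\dif\mu$ for $f \ge 0$, and Cauchy--Schwarz pins the integrand to zero. The paper instead performs a second, symmetric test: pairing $T_n'$ against $f\star\psi$ yields $\langle\psi,\psi'\rangle\mu_n \to \mu$, and combining this with $\mu_n \to \langle\psi,\psi'\rangle\mu$ gives $\langle\psi,\psi'\rangle^2 = 1$ $\mu$-a.e.; the positivity of the measures $\mu_n$ forces the sign $+1$. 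Your LSC route is a touch cleaner and reuses machinery the paper has already developed, while the paper's cross-testing route is self-contained; both rely on positivity of the $\mu_n$, both use the Cauchy--Schwarz equality case to pass from $\langle\psi,\psi'\rangle = 1$ to $\psi = \psi'$, and both finish with the same continuity argument to upgrade ``$\mu$-a.e.'' to ``pointwise on $\supp T$.''
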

\begin{proof}
Let $T_n' := \psi' \mu_n$.
Let $\varphi$ be a $C^0_\cpt$ $\ell$-form, and observe that since $(T_n)$ is vaguely precompact, there exists $C_\varphi > 0$ such that for every $n$, $\int_M |\varphi| \dif \mu_n \leq C_\varphi$.
For every $\varepsilon > 0$, and all sufficiently large $n$,
\begin{align*}
\left|\int_M (T_n' - T) \wedge \varphi\right|
&\leq \left|\int_M (T_n - T_n') \wedge \varphi\right| + \left|\int_M (T_n' - T) \wedge \varphi\right| \\
&\leq \|\psi' - \psi_n\|_{C^0} \int_M |\varphi| \dif \mu_n + \left|\int_M (T_n - T) \wedge \varphi\right| \\
&\leq (1 + C_\varphi) \varepsilon.
\end{align*}
Taking $\varepsilon \to 0$, we see that $T_n' \to T$ vaguely.
Testing $T_n'$ against $f \star \psi'$ where $f \in C^0_\cpt$ is arbitary, we conclude that $\mu_n \to \langle \psi, \psi'\rangle \mu$ vaguely.
But testing against $f \star \psi$, we have $\langle \psi, \psi'\rangle \mu_n \to \mu$ vaguely.
It follows that $\langle \psi, \psi'\rangle = 1$, and since both are unit forms, this is only possible if $\psi = \psi'$.
\end{proof}

%%%%%%%%%%%%%%%%%%%%%%
\subsection{Boundaries}\label{boundary conventions}
If $X$ is a metric measure space and $E \subseteq X$ is a measurable set, then it is standard dogma in measure theory that $E$ should be viewed not as a set, but as the \emph{equivalence class} of sets up to symmetric difference with sets of measure zero.
Therefore the boundary $\partial^{\rm top} E := \overline E \cap \overline{M \setminus E}$ in the sense of point-set topology is ill-defined, unless $E$ has some canonical representative.
This issue is rectified by the following definition.

\begin{definition}
Let $(X, \mu)$ be a metric measure space and $E \subseteq X$ a measurable set.
The \dfn{measure-theoretic boundary}\footnote{There are multiple, nonequivalent definitions of the boundary operator on metric measure spaces \cite[Chapter 6]{Pugh02}.} $\partial^{\rm meas} E$ is the set of $x \in X$ such that for every sufficiently small $\varepsilon > 0$,
$$0 < \frac{\mu(E \cap B(x, \varepsilon))}{\mu(B(x, \varepsilon))} < 1.$$
\end{definition}

If $X = M$ is a Riemannian manifold with its Riemannian measure $\mu$, then there exists a representative $\tilde E$ of the equivalence class $E$ such that $\partial^{\rm top} \tilde E = \partial^{\rm meas} E$ \cite[Theorem 3.1]{Giusti77}; it is straightforward to check that $\partial^{\rm meas} E$ and $\tilde E$ do not depend on the choice of Riemannian metric.
\emph{We adopt the convention throughout this paper that we are working with representatives $\tilde E$ such that $\partial^{\rm top} \tilde E = \partial^{\rm meas} E$, and simply write $\partial E$ for the boundary.}
Since $\partial E = \partial^{\rm top} \tilde E$, $\partial E$ is a closed set. 

Let $E$ be a \dfn{set of locally finite perimeter}, meaning that $\dif 1_E$ has locally finite mass.
The polar part of $\dif 1_E$ is a $1$-form which we identify with the unit conormal $\normal_E^\flat$.
By an easy generalization of \cite[Theorem 4.4]{Giusti77},
\begin{equation}\label{hausdorff measure is jump mass}
(\mathcal H^{d - 1})|_{\partial E} = \star |\dif 1_E|.
\end{equation}

% Recall that the \dfn{Lebesgue points} of $\normal_E^\flat$ are those points $x \in M$ such that we can choose a pointwise-defined representative of the components, $(\normal_E^\flat)_i$ independently of $x$ such that
% $$(\normal_E^\flat)_i(x) = \lim_{\varepsilon \to 0} \frac{1}{\int_{B(x, \varepsilon)} \star |\dif 1_E|} \int_{B(x, \varepsilon)} (\normal_E^\flat)_i \star |\dif 1_E|.$$
% The set of Lebesgue points does not depend on the Riemannian metric, or the choice of trivialization of the cotangent bundle used to define the components $(\normal_E^\flat)_i$ \cite[Corollary 2.2]{BackusFLG}, so the following definition makes sense:
% \todo{Do we actully need the reduced boundary?}

% \begin{definition}
% Let $M$ be a smooth manifold, and let $E \subseteq M$ be a set of locally finite perimeter.
% The \dfn{reduced boundary} $\partial^* E$ is the set of Lebesgue points $x$ of the unit conormal $\normal_E^\flat$ with $|\normal_E^\flat(x)| = 1$.
% \end{definition}

% and $\partial^* E$ is both dense and $\mathcal H^{d - 1}$-almost all of $\partial E$ \cite[Theorem 4.4]{Giusti77}.
% If $\normal_E^\flat$ extends continuously to all of $\partial E$, then $\partial E = \partial^* E$ is $C^1$ \cite[Theorem 4.11]{Giusti77}.
% Using the metric-independence of the set of Lebesgue points it is easy to show that these facts do not depend on the metric, even though \cite{Giusti77} only proves them for $\RR^d$.

%%%%%%%%%%%%%%%%%%%%%%%%%%%%%%%%%%%
\subsection{Functions of bounded variation}\label{BV appendix}
A function $u$ has \dfn{locally bounded variation} (denoted $u \in BV_\loc(M)$) if its distributional derivative $\dif u$ is a $d - 1$-current of locally finite mass, and a measurable set $E \subseteq M$ has \dfn{locally finite perimeter} if $1_E \in BV_\loc(M)$.
If $\dif u$ has finite mass, thus $\int_M \star |\dif u| < \infty$, then $u$ has \dfn{bounded variation}, denoted $u \in BV(M)$.

Let $u \in BV_\loc(M)$.
Combining \cite[Theorem 1.23]{Giusti77}\footnote{The cited proof is for euclidean space, but goes through on Riemannian manifolds unchanged.
The reader who prefers a proof of (\ref{coarea formula}) from more general principles may note that (\ref{coarea formula}) holds if $M$ is a Poincar\'e space \cite[Proposition 4.2]{Miranda03}.
This implies the result when $M$ is a Riemannian manifold, since we may cover any Riemannian manifold by Poincar\'e spaces.} with (\ref{hausdorff measure is jump mass}), we have the \dfn{coarea formula}
\begin{equation}\label{coarea formula}
\int_M \star |\dif u| = \int_{-\infty}^\infty \mathcal H^{d - 1}(\partial \{u > y\}) \dif y.
\end{equation}
In particular, for almost every $y \in \RR$, $\{u > y\}$ is a set of locally finite perimeter.

\begin{proposition}
Let $u \in BV_\loc(M)$. Then 
\begin{equation}\label{level sets define support}
	\supp \dif u = \overline{\bigcup_{y \in \RR} \partial \{u > y\}}.
\end{equation}
\end{proposition}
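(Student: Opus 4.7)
The plan is to reduce (\ref{level sets define support}) to the pointwise equivalence, for open sets $U \subseteq M$,
\[
U \cap \supp \dif u = \emptyset \iff U \cap \partial \{u > y\} = \emptyset \text{ for every } y \in \RR.
\]
Once this equivalence is in hand, taking complements of the largest open sets satisfying each condition identifies $\supp \dif u$ with $\overline{\bigcup_{y \in \RR} \partial \{u > y\}}$, which is exactly the claim.

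For the implication $\Leftarrow$, I would apply the coarea formula (\ref{coarea formula}) to $u|_U$, regarding $U$ as a Riemannian manifold in its own right. Since the topological boundary inside $U$ of $\{u|_U > y\}$ equals $\partial \{u > y\} \cap U$, this gives
\[
\int_U \star |\dif u| = \int_{-\infty}^\infty \mathcal H^{d-1}(U \cap \partial \{u > y\}) \dif y = 0,
\]
so $U$ is disjoint from $\supp \dif u$.

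For the implication $\Rightarrow$, I would start from the fact that if $\int_U \star |\dif u| = 0$, then $\dif u$ vanishes distributionally on $U$, hence $u$ is almost-everywhere equal to a constant $c_V$ on each connected component $V$ of $U$. For any $y \in \RR$ the set $\{u > y\} \cap V$ then coincides a.e.\ either with $V$ (when $y < c_V$) or with $\emptyset$; in either case every point of $V$ has density $0$ or $1$, so $V \cap \partial^{\rm meas} \{u > y\} = \emptyset$. Invoking the convention of Appendix \ref{boundary conventions}, according to which $\partial = \partial^{\rm meas}$ for our chosen representatives, we conclude $V \cap \partial \{u > y\} = \emptyset$, and then taking the union over components gives $U \cap \partial \{u > y\} = \emptyset$.

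The main obstacle is really just careful bookkeeping rather than any analytic difficulty: one must use the representative-level convention on boundaries from Appendix \ref{boundary conventions} in order to pass freely between $\partial^{\rm meas}$ (which only sees the a.e.\ equivalence class of $\{u > y\}$) and the closed set $\partial \{u > y\}$ appearing in the statement. The other small point is justifying the localized version of (\ref{coarea formula}) on $U$, which follows because its proof is itself local; no additional machinery is needed.
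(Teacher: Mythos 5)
Your proof is correct, and it splits naturally into the same two inclusions that the paper handles; the first of these is proved in essentially the same way, but for the second you have a genuinely different argument.

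For the inclusion $\partial\{u>y\} \subseteq \supp \dif u$ (your $\Rightarrow$), both you and the paper argue from the observation that $\dif u = 0$ on an open set $V$ forces $u$ to be a.e.\ constant on each component of $V$, whence every point of $V$ has density $0$ or $1$ in $\{u>y\}$ and so $\partial^{\rm meas}\{u>y\}$ misses $V$. The paper phrases this as a direct contradiction (a point of $\partial\{u>y\} \cap V$ would split $V$ into two sets of positive measure on which $u$ takes values on opposite sides of $y$, contradicting a.e.\ constancy), but the content is the same.

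For the reverse inclusion $\supp \dif u \subseteq \overline{\bigcup_y \partial\{u>y\}}$ (your $\Leftarrow$), you invoke the coarea formula (\ref{coarea formula}) localized to $U$: if $U$ meets no $\partial\{u>y\}$, then all the Hausdorff measures $\mathcal{H}^{d-1}(U \cap \partial\{u>y\})$ vanish, so $\int_U \star|\dif u| = 0$. The paper instead takes $V$ a connected neighborhood avoiding every $\partial\{u>y\}$, observes that for each $y$ the set $V$ lies entirely in $\{u>y\}$ or in $\{u \le y\}$, identifies the finite transition value $y^*$ (finiteness from $u \in L^1_\loc$), and concludes $u|_V \equiv y^*$ a.e. Your route is cleaner in that it bypasses the need to choose a connected neighborhood and to exhibit the transition value; it simply integrates and uses what is already known about the coarea formula. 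The cost is that you need to justify the localization of (\ref{coarea formula}) to an arbitrary open subset, which you correctly observe follows because the proof of the coarea formula is local; that step is indeed routine, and the paper itself applies the coarea formula on subdomains elsewhere. Both arguments are valid.

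One small wording caveat in your $\Rightarrow$ step: when you say every point of $V$ ``has density $0$ or $1$,'' what you actually have is the stronger fact that for every sufficiently small $\varepsilon$ (namely, small enough that $B(x,\varepsilon)\subset V$), the ratio $\mu(\{u>y\}\cap B(x,\varepsilon))/\mu(B(x,\varepsilon))$ is \emph{exactly} $0$ or $1$; this is what is needed to conclude $x\notin\partial^{\rm meas}\{u>y\}$ under the paper's definition, which is a ``for all sufficiently small $\varepsilon$'' condition rather than a limiting-density condition. Your argument does supply this, so nothing is missing; it is just worth stating it in that sharper form.
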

\begin{proof}
First observe that $M \setminus \supp \dif u$ is the set of $x \in M$ such that for some open neighborhood $V \ni x$, $u|_V$ is almost constant.
Hence if $\partial \{u > y\} \cap V$ is nonempty, we can write $V = V_+ \sqcup V_-$, where both $V_\pm$ have positive measure, $u|_{V_-} \leq y$, and $u|_{V_+} > y$, which contradicts that $u|_V$ is almost constant.
Therefore $\partial \{u > y\} \subseteq \supp \dif u$.

Conversely, if $x \notin \overline{\bigcup_{y \in \RR} \partial \{u > y\}}$, then there exists an open neighborhood $V \ni x$ such that for every $y \in \RR$, $\partial \{u > y\} \cap V$ is empty.
Therefore either $V \subseteq \{u > y\}$ or $V \subseteq \{u \leq y\}$.
Let $y$ be the real number at which $u$ transitions from $\{u > y\}$ meeting $V$ to $\{u \leq y\}$ meeting $V$.
(Such a number must exist, or else $|u|_V| = +\infty$, violating that $u \in L^1_\loc$.)
Then $u|_V = y$ almost everywhere, so $u|_V$ is almost constant, hence $x \notin \supp \dif u$.
\end{proof}

\begin{lemma}\label{monotonicity dichotomy}
Let $I \subseteq \RR$ be an open interval.
Let $u \in BV_\loc(I)$, and let $\sigma: \supp \dif u \to \{\pm 1\}$ be the polar part of $\dif u$.
For every $x \in I$, one of the following holds:
\begin{enumerate}
\item There exists $\varepsilon > 0$ such that $u$ is monotone on $(x - \varepsilon, x + \varepsilon)$. \label{monotone interval}
\item For every $\varepsilon > 0$ there exist $y, z \in (x - \varepsilon, x + \varepsilon) \cap \supp \dif u$ of $\sigma$ such that $\sigma(y) = 1$ and $\sigma(z) = -1$, and $y, z$ are Lebesgue points of $\sigma$ with respect to the measure $|\dif u|$.
\end{enumerate}
\end{lemma}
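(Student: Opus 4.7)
The plan is to argue by contrapositive: assuming (2) fails at $x$, I deduce (1). Suppose there exists $\varepsilon > 0$ such that no pair $(y, z) \in (x - \varepsilon, x + \varepsilon) \cap \supp \dif u$ of the form demanded in (2) exists. Then, after swapping signs if necessary, no Lebesgue point of $\sigma$ (with respect to $|\dif u|$) lying in $(x - \varepsilon, x + \varepsilon) \cap \supp \dif u$ takes the value $-1$.

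The first ingredient I invoke is the Lebesgue--Besicovitch differentiation theorem applied to the signed Radon measure $\dif u$ on $I$: the set of Lebesgue points of the polar part $\sigma$ (with respect to $|\dif u|$) has full $|\dif u|$-measure in $\supp \dif u$. Combined with the hypothesis of the previous paragraph, this forces
$$|\dif u|\bigl((x - \varepsilon, x + \varepsilon) \cap \{\sigma = -1\}\bigr) = 0,$$
so the negative part $(\dif u)^-$ vanishes on $(x - \varepsilon, x + \varepsilon)$; equivalently, $\dif u$ restricts to a nonnegative Radon measure on that interval.

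The second ingredient is the one-dimensional characterization of monotonicity for $BV_\loc$ functions: a function in $BV_\loc$ on an interval whose distributional derivative is a nonnegative Radon measure admits a nondecreasing representative. Applying this to $u$ on $(x - \varepsilon, x + \varepsilon)$ produces (1).

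The proof is essentially bookkeeping, with no significant obstacle. The one subtle point is that $\sigma$ is only defined $|\dif u|$-almost everywhere, so an assertion like $\sigma(y) = 1$ should be read as the $|\dif u|$-approximate limit of $\sigma$ at $y$, which exists precisely at Lebesgue points; this is exactly the content of (2) and is consistent with the paper's standing convention of identifying measurable objects with canonical pointwise representatives.
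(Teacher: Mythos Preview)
Your argument is correct. It runs the contrapositive in the opposite direction from the paper: the paper assumes $u$ is not monotone on any neighborhood of $x$, produces points $y_1 < y_2$ and $z_1 < z_2$ at which $u$ strictly increases and strictly decreases respectively, and then uses the fundamental theorem $\int_{y_1}^{y_2} \dif u = u(y_2) - u(y_1) > 0$ to extract a Lebesgue point with $\sigma = 1$ (and similarly for $\sigma = -1$). Your route---that the absence of one sign among the Lebesgue points forces $(\dif u)^-$ (or $(\dif u)^+$) to vanish on the interval by Lebesgue--Besicovitch, whence $u$ is monotone there---is slightly cleaner, as it avoids the paper's small technicality of nudging the endpoints $y_i, z_i$ onto continuity points of $u$ so that the integral $\int_{y_1}^{y_2} \dif u$ is unambiguous. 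Both arguments rest on the same two facts (Lebesgue differentiation for Radon measures on $\RR$, and the equivalence between sign of $\dif u$ and monotonicity of $u$), so the difference is organizational rather than substantive.
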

\begin{proof}
Assume that (\ref{monotone interval}) is false, and let $\varepsilon > 0$.
Since $u$ is not monotone on $I' := (x - \varepsilon, x + \varepsilon)$, there exist $y_1, y_2, z_1, z_2 \in I'$ such that:
\begin{enumerate}
\item $y_1 < y_2$ and $u(y_1) < u(y_2)$.
\item $z_1 < z_2$ and $u(z_1) > u(z_2)$.
\end{enumerate}
Furthermore, $u$ is continuous away from a countable set, so after slightly moving the $y_i$ and $z_i$ we may assume that $u$ is continuous at $y_i$ and $z_i$.
Then
$$\int_{y_1}^{y_2} \dif u = u(y_2) - u(y_1) > 0,$$
where the integral is unambiguously defined since $u$ does not jump at $y_1$ and $y_2$.
So there exists a Lebesgue point $y \in (y_1, y_2) \cap \supp \dif u$ such that $\sigma(y) = 1$.
A similar argument works for $z$.
\end{proof}

%%%%%%%%%%%%%%%
\subsection{Minimal hypersurfaces}\label{minimal surfaces}
Let us establish conventions for minimal and area-minimizing hypersurfaces.
Let $U \subseteq M$ be an open set.
A smooth embedded oriented hypersurface $N \subset U$ such that $\partial N \subset \partial U$ is \dfn{area-minimizing} if, for every smooth embedded oriented hypersurface $L \subset U$ such that $\partial L = \partial N$,
$$\mathcal H^{d - 1}(N) \leq \mathcal H^{d - 1}(L).$$
We restrict to oriented hypersurfaces, because we are mainly interested in the case that $H^1(U, \RR) = 0$ and $N = \partial E$ for some open $E \subset U$, in which case $N$ is oriented by the inward unit normal.

Let $N = \partial E$ be a smooth hypersurface. By the coarea formula, $N$ is area-minimizing iff $1_E$ has least gradient. 
The regularity theorem for area-minimizers asserts that the assumption of smoothness is unnecessary here:

\begin{theorem}\label{regularity}
Let $d \leq 7$. Let $E$ be a set of locally finite perimeter in the Riemannian manifold $M$, such that $1_E$ has least gradient.
Then $\partial E$ is a smooth area-minimizing hypersurface.
\end{theorem}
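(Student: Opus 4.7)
The plan is to deduce this theorem from the classical Caccioppoli/De Giorgi regularity theory together with the Federer dimension reduction argument, applied in local coordinates on $M$.

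First, I would unpack what ``$1_E$ has least gradient'' means geometrically. By testing the least gradient property against perturbations $v = 1_F - 1_E$ where $E \triangle F \Subset M$, one sees immediately that $E$ is a \emph{local perimeter minimizer} (sometimes called a Caccioppoli minimizer): for any measurable $F$ with compact symmetric difference with $E$ inside some coordinate ball $U$, one has $\int_U \star |\dif 1_E| \leq \int_U \star |\dif 1_F|$. This is the standard hypothesis for the De Giorgi regularity theory.

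Next I would invoke the regularity theory in a normal coordinate chart. Since the theorem is local and since normal coordinates transform the Riemannian perimeter functional into a uniformly elliptic integrand close to the Euclidean perimeter (with smooth integrand depending on the metric), the De Giorgi--Federer theory goes through. Concretely, the reduced boundary $\partial^* E$ is a $C^{1,\alpha}$ hypersurface, and the singular set $\Sigma := \partial E \setminus \partial^* E$ satisfies $\dim_{\mathcal H} \Sigma \leq d - 8$ by Federer's dimension reduction (using tangent cone blow-ups at singular points, which reduce via the compactness of local perimeter minimizers to the Euclidean Bernstein-type rigidity results of Simons, Bombieri--De Giorgi--Giusti). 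When $d \leq 7$ this forces $\Sigma = \emptyset$, so $\partial E = \partial^* E$ is $C^{1,\alpha}$ everywhere. From $C^{1,\alpha}$ one bootstraps to $C^\infty$ via Schauder estimates on the minimal surface equation, as in Section \ref{Leaf estimates}.

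Finally, to upgrade ``perimeter minimizer'' to ``area-minimizing hypersurface'' in the sense we defined, I would use the coarea formula (\ref{coarea formula}) backwards: any smooth oriented competitor $L \subset U$ with $\partial L = \partial N$ (where $N = \partial E$) bounds a set $F$ with $E \triangle F \Subset U$, and $\mathcal H^{d-1}(N) = \int_U \star |\dif 1_E| \leq \int_U \star |\dif 1_F| = \mathcal H^{d-1}(L)$ by (\ref{hausdorff measure is jump mass}).

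The only mildly delicate point is citing the De Giorgi--Federer theory on a Riemannian manifold rather than in $\RR^d$; this is standard but not always stated in the textbook references. One can either cite Simon's GMT lecture notes (which work in this generality) or note that the statement is local, pass to normal coordinates where the metric is $C^\infty$-close to Euclidean on small balls, observe that the minimality condition is stable under such bi-Lipschitz changes of the integrand, and then apply the Euclidean theorem.
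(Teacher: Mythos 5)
The paper does not prove this theorem; it cites the standard references (De Lellis, Morgan's Chapter 8, Giusti's Theorem 10.11, and the author's expository note) and observes that the result is well-known. Your proposal fleshes out exactly the argument those references implement --- deduce local perimeter minimality, invoke De Giorgi--Federer regularity plus dimension reduction in normal coordinates, bootstrap $C^{1,\alpha}$ to $C^\infty$ via Schauder, and translate back to the paper's notion of area-minimizing via the coarea identity (\ref{hausdorff measure is jump mass}) --- so it is correct and matches the approach the paper defers to.

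One small point you gloss over in the final paragraph: that an oriented competitor $L$ with $\partial L = \partial N$ necessarily cobounds a region $F$ with $E \triangle F \Subset U$ is a homological claim, valid in a coordinate ball (which is all that is needed, since area-minimizing is checked locally) but worth flagging since the paper's definition of area-minimizing explicitly restricts attention to sets $U$ with $H^1(U,\RR) = 0$ and boundaries of open sets for exactly this reason.
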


This is well-known; see, for example, \cite{DeLellis18} or \cite[Chapter 8]{morgan2016geometric}.
Another proof is available for euclidean space which explicitly uses functions of least gradient \cite[Theorem 10.11]{Giusti77}; my expository note \cite{BackusFLG} explains how to extend that argument to Riemannian manifolds.

A hypersurface $N \subset U$ is \dfn{minimal} if the mean curvature $H_N := \tr \Two_N$ is $0$.
Thus every area-minimizing hypersurface is minimal, but not conversely.

\begin{lemma}
Let $N \subset M$ be a minimal hypersurface and $p \in N$. Then there exist $c, r_0 > 0$ such that for every $0 < r \leq r_0$,
\begin{equation}\label{monotonicity formula}
\mathcal H^{d - 1}(N \cap B(p, r)) \geq cr^{d - 1}.
\end{equation}
\end{lemma}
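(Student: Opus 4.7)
The plan is to prove the classical monotonicity formula: define
\begin{equation*}
A(r) := \mathcal H^{d-1}(N \cap B(p, r)),
\end{equation*}
and show that, for $r$ less than some $r_0 > 0$ depending on the injectivity radius at $p$ and $\|\Riem_M\|_{C^0}$, the rescaled density $e^{Cr} A(r)/r^{d-1}$ is nondecreasing in $r$. Combined with the fact that $N$ is a smooth embedded hypersurface through $p$ (and hence has Euclidean density $1$ at $p$), this yields the lower bound (\ref{monotonicity formula}) with $c$ any constant less than $\omega_{d-1}$ and $r_0$ small enough.

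The first step is to work in normal coordinates based at $p$, chosen so that the metric $g$ satisfies $g_{ij} = \delta_{ij} + O(K|x|^2)$ on a ball of radius $\leq \tfrac{1}{2}\inj(p)$, where $K = \|\Riem_M\|_{C^0}$. Let $\rho(x) = |x|$ be the Euclidean radius function in these coordinates, and let $X := \chi_\varepsilon(\rho) \rho \partial_\rho$, where $\chi_\varepsilon$ is a smooth nonincreasing cutoff with $\chi_\varepsilon = 1$ on $[0, r - \varepsilon]$ and $\chi_\varepsilon = 0$ on $[r, \infty)$. Since $N$ is minimal, the first variation formula yields
\begin{equation*}
\int_N \Div_N X \, \dif\mathcal H^{d - 1} = 0.
\end{equation*}
A direct computation in normal coordinates, using $|\nabla\rho|^2 = 1 + O(K\rho^2)$ and $\nabla^2(\rho^2/2) = g + O(K\rho^2)$, gives
\begin{equation*}
\Div_N(\rho \partial_\rho) = (d-1) + O(K\rho^2),
\end{equation*}
so after letting $\chi_\varepsilon \to 1_{[0,r]}$ and rearranging, one obtains the differential inequality
\begin{equation*}
(d-1) A(r) - r A'(r) \leq C K r^2 A(r),
\end{equation*}
valid for almost every $r \in (0, r_0]$, where $r_0$ is fixed small enough relative to $\inj(p)$ and $K^{-1/2}$, and $C$ is absolute.

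Dividing by $r^d$, this rewrites as $\dfrac{d}{dr}\!\left(\dfrac{A(r)}{r^{d-1}}\right) \geq -CK r \cdot \dfrac{A(r)}{r^{d-1}}$, and an integrating factor $e^{CKr^2/2}$ shows that $r \mapsto e^{CKr^2/2} A(r)/r^{d-1}$ is nondecreasing on $(0, r_0]$. Since $N$ is a smooth hypersurface through $p$, $N$ is $C^1$-close to $T_p N$ near $p$, so
\begin{equation*}
\lim_{r \to 0^+} \frac{A(r)}{r^{d-1}} = \omega_{d-1}.
\end{equation*}
Combining the two displays gives $A(r) \geq e^{-CKr_0^2/2} \omega_{d-1} r^{d-1}$ for all $r \in (0, r_0]$, which is (\ref{monotonicity formula}) with any $c < \omega_{d-1}$.

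The only real subtlety is carefully justifying the expansion of $\Div_N(\rho\partial_\rho)$ in normal coordinates, which reduces to the standard fact that the Hessian of $\rho^2/2$ equals the metric to leading order (Gauss lemma, plus Jacobi field estimates governed by $\|\Riem_M\|_{C^0}$); the rest is routine differentiation of $A(r)$, which is permissible for a.e. $r$ since $A$ is monotone hence differentiable a.e.
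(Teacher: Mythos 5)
Your argument is correct and follows essentially the same approach as the paper: both rely on the monotonicity formula for minimal hypersurfaces in a Riemannian manifold together with the observation that a smooth embedded hypersurface has density $1$ (i.e.\ $\lim_{r \to 0} A(r)/(\omega_{d-1} r^{d-1}) = 1$) at any of its points. The only difference is that you derive the monotonicity formula from scratch via the first-variation identity in normal coordinates, whereas the paper simply cites it from the literature.
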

\begin{proof}
Let $A, r_0$ be as in the monotonicity formula \cite[Theorem 7.11]{MarquesXX}. Then the quantity 
$$f(r) := \frac{e^{Ar^2} \mathcal H^{d - 1}(N \cap B(p, r))}{\omega_{d - 1} r^{d - 1}}$$
is nondecreasing on $(0, r_0]$.
So the density $\theta := \lim_{r \to 0} f(r)$ of $N$ at $p$ exists, and
$$\mathcal H^{d - 1}(N \cap B(p, r)) \geq e^{-Ar^2} \theta r^{d - 1}.$$
Since $N$ is smooth, it has a tangent space at $p$, so $\theta \geq 1$.
The result follows with $c := \mathcal \theta e^{-Ar_0^2}$ (since we may assume $r_0 < \infty$).
\end{proof}

\begin{theorem}[{\cite[Theorem 7.2]{WhiteNotes}}]\label{compactness for minimal surfaces}
Let $(N_n)$ be a sequence of complete minimal hypersurfaces in an open set $U$.
Assume that for every $V \Subset U$ there exist $A, C > 0$ such that for every $n$, $\|\Two_{N_n}\|_{C^0(N_n \cap V)} \leq A$ and $\mathcal H^{d - 1}(N_n \cap V) \leq C$.
Then there is a complete minimal hypersurface $N \subset U$ such that $N_n \to N$ in $C^\infty$.
\end{theorem}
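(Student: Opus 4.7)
The plan is to follow the classical strategy for compactness of minimal hypersurfaces with bounded second fundamental form, combining local graph representations, Schauder estimates for the minimal surface equation, and a diagonal Arzelà–Ascoli argument, then using the area bound and the monotonicity formula to ensure that the limit is complete.

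First, I would work locally. Fix $V \Subset U$ and, for each $n$, each point $p \in N_n \cap V$. By rescaling and passing to normal coordinates centered at $p$, the curvature bound $\|\Two_{N_n}\|_{C^0} \le A$ lets me apply Lemma \ref{existence of tubes} to represent the component of $N_n$ through $p$ as the graph of a function $f_n^p$ over the tangent plane $T_p N_n$ on a ball whose radius $r_0 = r_0(A, K, i)$ depends only on $A$ and the geometry of $M$. Because $f_n^p$ satisfies the minimal surface equation with small $C^1$ norm, the Schauder estimates of \S\ref{Leaf estimates}, in particular (\ref{norms on uk}), yield $\|f_n^p\|_{C^k} \lesssim_k 1$ uniformly in $n$ and in the choice of base point.

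Next I would construct the limit set. For each $V \Subset U$, the area bound $\mathcal H^{d-1}(N_n \cap V) \le C$, combined with the monotonicity formula (\ref{monotonicity formula}), gives a uniform upper bound on the number of disjoint $r_0$-graph pieces that can meet any compact subset of $V$. I pick a countable dense set of points $\{q_j\} \subset U$; for each $j$, if $\dist(q_j, N_n) \to 0$ along a subsequence, I select $p_n^j \in N_n$ nearest to $q_j$, pass to a subsequence so that $T_{p_n^j} N_n$ converges to some hyperplane $\Pi_j$ in the Grassmannian, and apply Arzelà–Ascoli to extract a $C^\infty$ limit $f^j$ of $f_n^{p_n^j}$ on the fixed graph ball. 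A diagonal argument over $j$ produces a single subsequence for which all these local limits exist simultaneously, and the graphs of the $f^j$ glue to a closed subset $N \subseteq U$ thanks to the consistency of the local representations (if two local graphs from the same $N_n$ overlap, their $C^\infty$ limits must agree on the overlap, by the tangential uniqueness provided by the maximum principle, Proposition \ref{maximum principle}, applied to the limit).

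I would then verify that $N$ is a smooth minimal hypersurface and that convergence is in $C^\infty$. Smoothness and the vanishing of mean curvature pass to the limit because each $f^j$ inherits the minimal surface equation from the $f_n^{p_n^j}$, and $C^\infty$-convergence of the graph representatives is exactly the definition of $C^\infty$-convergence of hypersurfaces in a neighborhood. Completeness of $N$ in $U$ is the step I expect to be the subtlest: I must rule out the scenario where a leaf of some $N_n$ escapes to $\partial U$ along the subsequence, leaving a ``hole'' in the limit that would make $N$ incomplete in $U$. This is handled by the area bound $\mathcal H^{d-1}(N_n \cap V) \le C$ together with (\ref{monotonicity formula}): if $x \in U$ lies in the closure of $N$ (in $U$), then by monotonicity any ball $B(x, r) \Subset U$ that meets $N_n$ for infinitely many $n$ must capture definite area, so the graph pieces producing $x$ cannot disappear in the limit; thus $x \in N$. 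This closes up $N$ and yields a complete minimal hypersurface with $N_n \to N$ in $C^\infty_{\rm loc}$, as required.

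The main obstacle, as indicated, is maintaining a consistent choice of local tangent planes as the base points vary and $n \to \infty$, so that the local graph representations glue into a single global limit without spurious sheeting; the uniform Schauder bounds and the maximum principle together are what make this gluing work, exactly as in the proof of Theorem \ref{regularity theorem}.
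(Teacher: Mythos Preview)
The paper does not prove this theorem: it is stated with a citation to \cite[Theorem 7.2]{WhiteNotes} and no proof is given in the text, so there is no paper proof to compare against. Your sketch is the standard argument (uniform graph radius from the curvature bound, Schauder estimates for $C^k$ control, Arzel\`a--Ascoli plus a diagonal extraction, and the area bound together with monotonicity to control the number of sheets), and as an outline it is correct.

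Two small points where your write-up is imprecise. First, the invocation of the maximum principle (Proposition~\ref{maximum principle}) for gluing is unnecessary and slightly misplaced: overlapping local graphs coming from the \emph{same} $N_n$ already agree by definition, and this agreement passes to the $C^\infty$ limit directly; the maximum principle would only be relevant if you were trying to merge \emph{distinct} limit sheets, which you are not. Second, your completeness argument conflates two issues. The real statement is that $N$ is closed in $U$: if $x\in U$ is an accumulation point of $N$, pick $x_n\in N_n$ with $x_n\to x$; the uniform graph radius $r_0$ gives graphs over $T_{x_n}N_n$ on balls of fixed size containing $x$, and Arzel\`a--Ascoli on these graphs produces a limit graph through $x$, so $x\in N$. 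The area bound is not what prevents ``holes''; it is what bounds the number of sheets over any point (via monotonicity), ensuring the limit is an embedded hypersurface with locally finite multiplicity rather than an accumulation of infinitely many leaves.
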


\section{Minimal hypersurfaces are locally area-minimizing} \label{locally minimizing appendix}
\begin{proposition}\label{minimal implies locally minimizing}
Let $d \leq 7$, let $i, A > 0$, and let $g$ be a Riemannian metric on $M$ with $\|\Riem_g\|_{C^1} < \infty$ and injectivity radius $\geq i$.
Then there exists $\delta_* > 0$ which only depends on $\|\Riem_g\|_{C^1}, i, A$, such that the following holds:

For every ball $B(p, \delta) \subset M$ with $\delta \leq \delta_*$ and every oriented minimal hypersurface $N \subset B(p, \delta)$ with $p \in N$, $\partial N \subset \partial B(p, \delta)$, $\|\Two_N\|_{C^0} \leq A$, and trivial normal bundle, $N$ is the unique area-minimizing hypersurface with boundary $\partial N$.
\end{proposition}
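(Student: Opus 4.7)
The plan is to construct a smooth local minimal foliation containing $N$ as a leaf, and use its unit normal to calibrate $N$. By taking $\delta_*$ small enough in terms of $\|\Riem_g\|_{C^1}$, $i$, $A$, I would rescale so that in normal coordinates $(x, y) \in \RR^{d-1} \times \RR$ at $p$ the metric is $K_0$-close to euclidean in the sense of \S\ref{Leaf estimates}. Applying Lemmata \ref{existence of tubes} and \ref{lams have C0 fields} to the singleton family $\mathcal S = \{N\}$, and rotating so that $\normal_N(p) = \partial_y$, $N$ becomes the graph $\{y = u_0(x)\}$ of a function $u_0$ small in $C^2$ on a disk $D$ slightly larger than the projection of $B(p, \delta)$ onto $T_p N$.

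For each $t$ in a small interval $(-\tau, \tau)$, I would solve the Dirichlet problem $P u_t = 0$ on $D$ with boundary data $u_0|_{\partial D} + t$. Since $u_0$ is itself a solution and the minimal surface equation is uniformly elliptic in a $C^1$-neighborhood of $u_0$, the implicit function theorem in H\"older spaces produces a smooth family of solutions $\{u_t\}$ with $u_t|_{t=0} = u_0$. Applying the Harnack inequality (\ref{Schauder Harnack}) to $v := u_{t'} - u_t$ for $t' > t$ gives $u_{t'} - u_t \gtrsim t' - t$ uniformly, so the graphs $N_t := \{y = u_t(x)\}$ are pairwise disjoint and smoothly foliate a neighborhood of $N$. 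By tuning $\delta_*$, I would arrange $\tau \gtrsim \delta$, ensuring that the union of the $N_t$ covers all of $B(p, \delta)$.

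Next I would calibrate. Let $\normal$ denote the unit normal to the foliation compatible with the orientation of $N$; it is smooth on $B(p, \delta)$, and since the divergence of the unit normal to a foliation equals the mean curvature of its leaves, $\nabla \cdot \normal \equiv 0$. The $(d-1)$-form $\omega := \star \normal^\flat$ is therefore closed, satisfies $|\omega| \equiv 1$, and $\omega|_N = \mathrm{vol}_N$. For any oriented competitor $L \subset B(p, \delta)$ with $\partial L = \partial N$, simple connectedness of $B(p, \delta)$ gives $L - N = \partial S$ for some $d$-current $S$, so Stokes' theorem yields $\int_L \omega = \int_N \omega = \mathcal H^{d-1}(N)$, and hence $\mathcal H^{d-1}(L) \geq \int_L \omega = \mathcal H^{d-1}(N)$. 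Equality forces $\omega|_L = \mathrm{vol}_L$ almost everywhere, so $L$ is tangent to the foliation at every point; together with $\partial L = \partial N$ and disjointness of leaves, this forces $L = N$. Competitors lacking a priori smoothness reduce to this case via Theorem \ref{regularity}.

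The main obstacle is the quantitative part of the foliation step: guaranteeing that the admissible parameter range $\tau$ scales at least linearly with $\delta$ so the leaves fill all of $B(p, \delta)$, with every constant tracked explicitly in terms of $A$, $\|\Riem_g\|_{C^1}$, $i$. This demands careful bookkeeping through the Schauder and Harnack estimates of \S\ref{Leaf estimates}. Everything else---existence and smoothness of the foliation, closedness of $\omega$, and the calibration comparison---is essentially standard once the quantitative foliation is in hand.
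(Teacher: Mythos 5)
Your approach is correct in outline but genuinely different from the paper's. You construct a local minimal foliation by solving the Dirichlet problem for perturbed boundary data and then calibrate $N$ by the closed $(d-1)$-form $\star\,\normal^\flat$; the paper instead argues by compactness and contradiction, rescaling a hypothetical sequence of counterexamples to the unit ball, comparing each rescaled $N_j$ with an area-minimizing competitor $L_j'$ via the second variation formula, and showing that the first Dirichlet eigenvalue of the stability operator stays bounded below by $\lambda/2$ for $j$ large, which forces $L_j' = L_j$. Interestingly, the paper explicitly notes that a calibration/slicing argument in the style of Lawlor--Morgan could likely be modified to obtain the needed dependence of $\delta_*$ on $A$, $\|\Riem_g\|_{C^1}$, $i$, but chooses the indirect compactness route, originally sketched by Chodosh, precisely to avoid the quantitative bookkeeping you flag as ``the main obstacle.'' The two methods are not as far apart as they look: your implicit function theorem step silently requires the linearized (Jacobi) operator with Dirichlet conditions to be invertible on a small disk, and after rescaling that is exactly the statement that $-\Delta + o(1)$ has first Dirichlet eigenvalue $\geq \lambda/2$---the same eigenvalue computation that drives the paper's contradiction. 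So your constructive foliation buys an explicit calibration and a self-contained uniqueness argument, at the price of tracking every constant through the Schauder/Harnack chain; the paper's compactness argument outsources that bookkeeping to Arzel\`a--Ascoli and the Rayleigh--Ritz perturbation estimate, giving $\delta_*$ with the required dependence essentially for free but nonconstructively. Two small things you should still tend to in a full write-up: (i) the foliation needs to cover all of $B(p,\delta)$ (not just a cylinder over $D$), so choose $D$ with a slightly larger radius and verify the graphs $N_t$ sweep out a tube containing the ball; (ii) in the uniqueness step, argue that a calibrated competitor $L$ must be a relatively closed subset of a single leaf (since $L$ is tangent to the foliation and $\partial L \subset \partial B$), and that this leaf can only be $N_0 = N$ because the other leaves hit $\partial B(p,\delta)$ at a different height.
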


If we wanted to allow $\delta_*$ to depend on $N$ and not just $A$, then this proposition would follow from \cite[Theorem 2.1]{Lawlor96}.
In fact, one could likely modify Lawlor and Morgan's slicing argument to obtain the desired dependency.
Instead, we give a proof of Proposition \ref{minimal implies locally minimizing} which was originally sketched by Otis Chodosh in a MathOverflow post \cite{MathOverflowMinimalLocal}.
We take the results of \S\S\ref{Regularity}-\ref{level set estimates} as given in this appendix, as we shall not use Proposition \ref{minimal implies locally minimizing} in those sections.

Given a hypersurface $\Sigma \subset \Ball^d$ and a metric $h$ on $\Ball^d$, let $|\Sigma|_h := \mathcal H^{d - 1}(\Sigma)$ be the surface area (with respect to $h$), let
\begin{equation}\label{stability formula}
Q_{\Sigma, h} = -\Delta_{\Sigma, h} - |\Two_{\Sigma, h}|^2 - \Ric_h(\normal_{\Sigma, h}, \normal_{\Sigma, h})
\end{equation}
be the stability operator for $\Sigma$, and let $H_{\Sigma, h}$ be the mean curvature.
Then the second variation of area in the direction of a normal variation $s$ is \cite[Theorem 1.3]{Chodosh21}
\begin{equation}\label{second variation formula}
\delta^2_s |\Sigma|_h = \langle s, Q_{\Sigma, h} s\rangle_{L^2(\Sigma, h)} + \|sH_{\Sigma, h}\|_{L^2(\Sigma, h)}^2 + \int_\Sigma H_{\Sigma, h} s \star_{\Sigma, h} 1.
\end{equation}

\begin{lemma}\label{uniform continuity of stability}
Suppose that $s \in W^{1, 2}(\Ball^{d - 1})$ satisfy $s|_{\partial \Ball^{d - 1}} = 0$.
Let $\mathscr N$ be the set of graphs in $\Ball^d$ of functions lying in some compact subset of $C^2(\Ball^{d - 1})$ (so $s$ induces a normal variation on each member of $\mathscr N$).
Let $\mathscr G$ be a compact set of Riemannian metrics on $\Ball^d$ for the $C^2$ topology.
Then
\begin{align*}
\mathscr N \times \mathscr G &\to \RR, \\
(\Sigma, h) &\mapsto \delta^2_s |\Sigma|_h
\end{align*}
is uniformly continuous on $\mathscr N$, with modulus of continuity only depending on $\|s\|_{W^{1, 2}}, \mathscr N, \mathscr G$.
\end{lemma}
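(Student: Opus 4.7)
The plan is to unfold \eqref{second variation formula} into a quadratic-plus-linear functional of $s$ on the fixed parameter domain $\Ball^{d-1}$, whose coefficients depend continuously in $C^0$ on $(\Sigma, h)$, and then conclude via compactness of $\mathscr N \times \mathscr G$ together with Cauchy-Schwarz.

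First, integrate the $-\Delta_{\Sigma,h}$ term in $\langle s, Q_{\Sigma,h}s\rangle$ by parts. Since $s|_{\partial \Ball^{d-1}} = 0$ gives $s|_{\partial \Sigma} = 0$ for the induced normal variation, the boundary contribution vanishes, and combining \eqref{stability formula} with \eqref{second variation formula} gives
\[
\delta^2_s|\Sigma|_h = \int_\Sigma \Bigl(|\nabla_{\Sigma,h} s|^2 - \bigl(|\Two_{\Sigma,h}|^2 + \Ric_h(\normal_{\Sigma,h}, \normal_{\Sigma,h})\bigr)s^2 + H_{\Sigma,h}^2 s^2 + H_{\Sigma,h}\, s\Bigr)\star_{\Sigma,h}1.
\]

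Next, pull back via the graph parameterization $\Phi_\Sigma(x) = (x, f_\Sigma(x))$ to write the right-hand side as an integral over the fixed domain $\Ball^{d-1}$:
\[
\delta^2_s|\Sigma|_h = \int_{\Ball^{d-1}}\Bigl(A^{ij}_{\Sigma,h}(x)\,\partial_i s\,\partial_j s + B_{\Sigma,h}(x)\,s^2 + C_{\Sigma,h}(x)\,s\Bigr)\dif x,
\]
where $A^{ij}, B, C$ are built from $(f_\Sigma, h)$ by finitely many algebraic and differential operations (the induced metric and its inverse, the volume factor, the Christoffel symbols needed to form $\Two_{\Sigma,h}$ and $H_{\Sigma,h}$, and the evaluation of $\Ric_h$ along the graph). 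Each such operation is a continuous map from $C^2$ data into $C^0(\Ball^{d-1})$, so
\[
(\Sigma, h) \mapsto (A^{ij}_{\Sigma,h}, B_{\Sigma,h}, C_{\Sigma,h}) \in C^0(\Ball^{d-1})^N
\]
is continuous on the compact set $\mathscr N \times \mathscr G$, hence uniformly continuous with some modulus $\omega = \omega_{\mathscr N, \mathscr G}$.

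Finally, a direct application of Cauchy-Schwarz yields
\[
\bigl|\delta^2_s|\Sigma|_h - \delta^2_s|\Sigma'|_{h'}\bigr| \lesssim \omega\bigl(\dist((\Sigma,h), (\Sigma',h'))\bigr)\bigl(1 + \|s\|_{W^{1,2}}^2\bigr),
\]
which is the claimed uniform continuity, with modulus depending only on $\|s\|_{W^{1,2}}$, $\mathscr N$, and $\mathscr G$. The only point requiring any real care is verifying that every intrinsic geometric quantity on $\Sigma$ — the induced metric and its inverse, the volume form, $\Two$, $H$, and $\Ric$ restricted to the normal direction — genuinely descends to a $C^0$ function on $\Ball^{d-1}$ that depends continuously on the $C^2$ graph function and the $C^2$ ambient metric; this is routine bookkeeping, and there is no essential analytic obstacle.
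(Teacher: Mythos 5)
Your proposal is correct and follows the same underlying strategy as the paper: observe that the coefficients appearing in the second variation formula depend continuously (in $C^0$) on the pair $(\Sigma, h)$, then invoke compactness of $\mathscr N \times \mathscr G$ to upgrade this to uniform continuity, and conclude with a standard estimate in $\|s\|_{W^{1,2}}$. You supply a few details the paper leaves implicit — the integration by parts needed to make $\langle s, Q_{\Sigma,h}s\rangle$ sensible for $s \in W^{1,2}$, the pull-back to the fixed parameter domain $\Ball^{d-1}$, and the explicit Cauchy--Schwarz bound at the end — but these are exactly the bookkeeping steps the paper is tacitly relying on, not a genuinely different route.
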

\begin{proof}
First observe that the mapping
\begin{equation}\label{stability curvature map}
(\Sigma, h) \mapsto (Q_{\Sigma, h}, H_{\Sigma, h})
\end{equation}
is continuous, where $Q_{\Sigma, h}$ is measured using the $C^0$ topology on its coefficients, and $H_{\Sigma, h}$ is measured in $C^0$.
By (\ref{second variation formula}), it follows that $\delta^2_s |\Sigma|_h$ is a continuous function of $(\Sigma, h)$, and its modulus of continuity at $(\Sigma, h)$ only depends on $\|s\|_{W^{1, 2}}$.
The result then follows from the compactness of $\mathscr N \times \mathscr G$.
\end{proof}

\begin{proof}[Proof of Proposition \ref{minimal implies locally minimizing}]
We proceed by compactness and contradiction.
By rescaling, we may assume that $A = 1/100$.
Let $(g_j)$ be a sequence of metrics on $\Ball^d$ with $\|\Riem_g\|_{C^1} \lesssim 1$, which are increasingly severe violations of the proposition, in the sense that there exist oriented $g_j$-minimal hypersurfaces $N_j \subset B_j := B_{g_j}(0, 1/j)$ which contain $0$ and are not uniquely area-minimizing, but such that $\|\Two_{N_j}\|_{C^0(B_j)} \leq 1/100$.

We may assume that the $(g_j)$ are normal coordinates centered on $0$.
In particular,
$$\|(g_j)_{\mu \nu} - \delta_{\mu \nu}\|_{C^3} \lesssim \|\Riem_{g_j}\|_{C^1},$$
implying that $(g_j)$ is contained in a compact subset of $C^2$.
Furthermore, by rotating $N_j$ and applying the assumptions that $0 \in N_j$ and $\|\Two_{N_j}\|_{C^0(B_j)} \leq 1/100$ (and using the normal coordinate condition and Gauss-Codazzi to propagate this estimate to an estimate on the second fundamental form in the euclidean metric), we may assume (using Lemma \ref{existence of tubes}) that $N_j$ is the graph of a function, and is contained in a small tubular neighborhood of the equatorial disk in $B_j$.
Moreover, such a tubular neighborhood shrinks to the equatorial disk as $j \to \infty$.

Let $L_j \subset \Ball^d$ be the rescaling of $N_j$ by $j$, let $g_j'$ be the corresponding rescaling of $g_j$, and let $f_j$ be the function on (a large subset of) $\Ball^{d - 1}$ whose graph is $L_j$.
By elliptic estimates as in \S\ref{Leaf estimates}, and the estimates on $N_j$ described in the previous paragraph, $f_j \to 0$ in $C^3$.

Let $L_j'$ be an area-minimizing competitor of $L_j$ with respect to $g_j'$, which exists by a modification of \cite[Theorem 1.20]{Giusti77} and Theorem \ref{main thm of old paper}.
So we have bounds on $|L_j'|_{g_j'}$ (by Lemma \ref{least gradient area bound}) and $\|\Two_{L_j', g_j'}\|_{C^0}$ (by Lemma \ref{least gradient curvature bound}).
By (a standard generalization of) Theorem \ref{compactness for minimal surfaces}, $L_j'$ is the graph of a function $f_j'$ which converges to $0$ in $C^3$.
In particular, $L_j, L_j'$ are graphs over each other.
Viewing $L_j'$ as a normal variation of $L_j$, let $s_j$ be the function on $L_j$ defining the normal variation $L_j'$.
Then $s_j \to 0$ in $W^{1, 2}$.
Let $Q_j$ be the stability operator $Q_{L_j, g_j'}$, and observe that since $L_j, L_j'$ are converging to the equatorial disk $\Ball^{d - 1}$ in $C^2$, they lie in a compact subset of $C^2$. 
Moreover, $L_j$ has zero $g_j'$-mean curvature, so the $g_j'$-area of $L_j'$ satisfies
\begin{equation}\label{stability area bound}
|L_j'|_{g_j'} \geq |L_j|_{g_j'} + \langle s_j, Q_j s_j\rangle_{L^2(L_j, g_j')} + o(\|s_j\|_{W^{1, 2}(L_j, g_j')}^2)
\end{equation}
where the rate of convergence in the error term is independent of $j$ by Lemma \ref{uniform continuity of stability}.
By (\ref{norms on uk}), $\|s_j\|_{W^{1, 2}} \sim \|s_j\|_{L^2}$, which we use to replace the error term in (\ref{stability area bound}).

Let $\lambda > 0$ be the first Dirichlet eigenvalue of the Laplacian $-\Delta$ of $\Ball^{d - 1}$, and $\lambda_j$ the first Dirichlet eigenvalue of $Q_j$.
Observe that by (\ref{stability formula}) and the convergence $\|f_j\|_{C^3} + \|g_j'\|_{C^3} \to 0$, we can decompose 
\begin{align}
Q_j &= -\Delta + R_j, \label{perturbing Laplacian 1}\\
R_j &:= \partial_\mu a_j^{\mu \nu} \partial_\nu + b_j^\mu \partial_\mu + c_j, \label{perturbing Laplacian 2}
\end{align}
where, as $j \to \infty$,
\begin{equation}
\|a_j\|_{C^1} + \|b_j\|_{C^0} + \|c_j\|_{C^0} \to 0. \label{perturbing Laplacian 3}
\end{equation}
Let $u_j$ be an eigenfunction corresponding to $\lambda_j$ such that $\|u_j\|_{L^2} = 1$.
Using (\ref{perturbing Laplacian 1}--\ref{perturbing Laplacian 3}), and the elliptic estimate \cite[Theorem 8.12]{gilbarg2015elliptic}, we see that for $j$ large enough, 
$$\|u_j\|_{W^{2, 2}} \lesssim \|u_j\|_{L^2} + \|\lambda_j u_j\|_{L^2} \leq 1 + |\lambda_j|.$$
Moreover, $|\langle u_j, R_j u_j\rangle| = o(\|u_j\|_{W^{2, 2}})$, so by the Rayleigh-Ritz theorem,
\begin{align*}
\lambda_j 
&= \langle u_j, Q_j u_j\rangle 
= \langle u_j, -\Delta u_j\rangle + \langle u_j, R_j u_j\rangle 
\geq \lambda - o(1 + |\lambda_j|).
\end{align*}
Rearranging terms, we see that for $j$ large enough, $\lambda_j \geq \lambda/2$.
Since $L_j'$ is area-minimizing, we obtain from (\ref{stability area bound}) that for $j$ large enough,
$$|L_j'|_{g_j'} \geq |L_j|_{g_j'} + \frac{\lambda}{2} \|s_j\|_{L^2}^2 + o(\|s_j\|_{L^2}^2),$$
but since $\lambda > 0$ and $|L_j'|_{g_j'} \leq |L_j|_{g_j'}$, we conclude that if $j$ is large, then $s_j = 0$.
Therefore $L_j = L_j'$, even though $L_j$ is not uniquely area-minimizing.
\end{proof}

\printbibliography

\end{document}